\newtheorem{theorem}{Theorem}[section]
\newtheorem{proposition}{Proposition}[section]
\newtheorem{lemma}{Lemma}[section]
\newdefinition{remark}{Remark}[section]
\newtheorem{definition}{Definition}[section]
\newproof{proof}{Proof}
\newcommand{\e}{^\varepsilon}
\newcommand{\eps}{{\varepsilon}}
\newcommand{\ds}{\displaystyle}
\newcommand{\M}{\mathbf{M}}
\newcommand{\I}{\mathcal{I}\e}
\newcommand{\g}{\gamma}
\newcommand{\dist}{\mathrm{dist}}
\renewcommand{\a}{\alpha}
\renewcommand{\b}{\beta}
\newcommand{\cupl}{\bigcup\limits}
\newcommand{\supp}{\mathrm{supp}}
\newcommand{\suml}{\sum\limits}
\newcommand{\intl}{\int\limits}
\newcommand{\liml}{\lim\limits}
\newcommand{\maxl}{\max\limits}
\newcommand{\minl}{\min\limits}
\renewcommand{\phi}{\varphi}
\numberwithin{equation}{section}
\begin{document}

\begin{frontmatter}
\journal{Journal of Differential Equations}
\author{Andrii Khrabustovskyi}
\title{Periodic Riemannian manifold with preassigned gaps in spectrum of Laplace-Beltrami
operator} \ead{andry9@ukr.net}
\address{Mathematical Division, B. Verkin Institute for Low
Temperature Physics and Engineering of the National Academy of
Sciences of Ukraine, Lenin avenue 47, Kharkiv 61103, Ukraine}

\begin{abstract}
It is known (E.L. Green (1997), O. Post (2003)) that for an
arbitrary $m\in\mathbb{N}$ one can construct a periodic
non-compact Riemannian manifold $M$ with at least $m$ gaps in the
spectrum of the corresponding Laplace-Beltrami operator\ \
$-\Delta_M$. In this work we want not only to produce a new type
of periodic manifolds with spectral gaps but also to control the
edges of these gaps. The main result of the paper is as follows:
for \textcolor{black}{ arbitrary} pairwise disjoint intervals
$(\a_j,\b_j)\subset[0,\infty)$, $j=1,\dots,m$ ($m\in\mathbb{N}$),
for an arbitrarily small $\delta>0$ and for an arbitrarily large
$L>0$ we construct a periodic non-compact Riemannian manifold $M$
with at least $m$ gaps in the spectrum of the operator
$-\Delta_{M}$, moreover the edges of the first $m$ gaps belong to
$\delta$-neighbourhoods of the edges of the intervals
$(\a_j,\b_j)$, while the remaining gaps (if any) are located
outside the interval $[0,L]$.
\end{abstract}

\begin{keyword} periodic manifold\sep Laplace-Beltrami operator\sep
spectrum\sep gaps\sep homogenization
\end{keyword}

\end{frontmatter}

\section*{Introduction}
In this paper we deal with non-compact periodic manifolds. The
$n$-dimensional Riemannian manifold $M$ is called \textit{periodic}
if there is a discrete finitely generated abelian group $\Gamma$
acting isometrically, properly discontinuously and co-compactly on
$M$. Roughly speaking $M$ is glued from countably many copies of
some compact manifold $\M$ (period cell) and each
$\gamma\in\Gamma$ maps $\M$ to one of these copies.

Let $M$ be an $n$-dimensional periodic Riemannian manifold. We
denote by\ $-\Delta_M$ the Laplace-Beltrami operator on $M$. It is
known (see e.g. \citep{Post_PHD}) that the spectrum
$\sigma(-\Delta_M)$ of the operator $-\Delta_M$ has band-gap
structure, that is
\begin{gather}\label{bands}
\sigma(-\Delta_M)=\cupl_{k=1}^\infty \mathcal{J}_k(M),
\end{gather}
where $\mathcal{J}_k(M)=[a_k,b_k]\subset [0,\infty)$ are compact
intervals called \textit{bands},
$a_k,b_k\underset{k\to\infty}\nearrow\infty$, $a_1=0$. In general
the bands may overlap. The open interval $(\a,\b)$ is called a
\textit{gap} if $(\a,\b)\cap \sigma(-\Delta_M)=\varnothing$ and
$\a,\b\in \sigma(-\Delta_M)$.

The existence of gaps in the spectrum is not guaranteed: for
instance the spectrum of the operator
$-\Delta_{\mathbb{R}^n}=-\suml_{j=1}^n\ds{\partial^2/\partial
x^2_j}$ in $\mathbb{R}^n$ coincides with $[0,\infty)$. It is easy
to see (cf. \citep{DavHar}) that in $1$-dimensional case any
periodic Laplace-Beltrami operator has no gaps. However in the
case $n\geq 2$ we have essentially another situation. Namely, E.
B. Davies and E. M. Harrell II \citep{DavHar} considered the
manifold $M=\mathbb{R}^n$ ($n\geq 2$) with a periodic conformally
flat \textcolor{black}{metric} $g_{ij}=a\delta_{ij}$, where $a=a(x)$ is a periodic
strictly positive smooth function. The authors proved that $a(x)$
can be chosen in such a way that at least one gap in the spectrum
of the operator $-\Delta_M$ exists.

Further, E. L. Green \citep{Green} for any $m\in \mathbb{N}$
constructed a periodic conformally flat \textcolor{black}{metric} in $\mathbb{R}^2$
such that the corresponding Laplace-Beltrami operator has at least
$m$ gaps in the spectrum.

Manifolds of another type were studied by O. Post in
\citep{Post_JDE}, \textcolor{black}{where the author} considered two
different constructions: first, he constructed a periodic manifold
$M\e$ ($\eps>0$ is a small parameter) starting from countably many
copies of a fixed compact manifold connected by small cylinders
(the parameter $\eps$ characterizes a size of the cylinders), in
the second construction he started from a periodic manifold which
further is conformally deformed (the parameter $\eps$
characterizes sizes of domains where the \textcolor{black}{metric}
is deformed). For any $m\in \mathbb{N}$ the existence of $m$ gaps
is proved for $\eps$ small enough. These results were generalized
by F. Lledo and O. Post \citep{Lledo} to the case of periodic
manifolds with non-abelian group $\Gamma$.

Also P. Exner and O. Post \citep{Exner} proved the existence of
gaps for some graph-like manifolds, i.e. the manifolds which
shrink with respect to an appropriate parameter to a graph.

We remark that \textcolor{black}{a similar problem} (i.e. the
existence of gaps in the spectrum) was studied in
\citep{Figotin1,Frielander,Hempel,Zhikov} for periodic divergence
type elliptic operators in $\mathbb{R}^n$, in \citep{HempelHerbst}
for periodic magnetic Schr\"{o}dinger operator, and in
\citep{Figotin2,Filonov} for periodic Maxwell operator. In these
works the gaps in the spectrum are the consequence of a high
contrast in the coefficients. We refer to the overview
\citep{HempelPost} where these and other related  questions are
discussed in detail.

In the present work we want not only to construct a new type of
periodic Riemannian manifolds with gaps in the spectrum of the
Laplace-Beltrami operator but also be able \textit{to control the
edges of these gaps}. Namely the goal of the work is to solve the
following problem: for an arbitrary finite set of pairwise
disjoint finite intervals on the positive semi-axis to construct a
periodic Riemannian manifolds $M$ with at least $m$ gaps in the
spectrum of $-\Delta_M$ (here $m$ is the number of the preassigned
intervals), moreover the first $m$ gaps have to be "close" to the
preassigned intervals, and the remaining gaps (if any) have to be
"close" to infinity.

Let us formulate the main result of the paper.

\begin{theorem}[Main Theorem]\label{th1}
Let $(\a_j,\b_j)\subset[0,\infty)$ ($j={1,\dots,m},\ m\in
\mathbb{N} $) be arbitrary pairwise disjoint finite intervals. Let
$\delta>0$ be \textcolor{black}{an arbitrarily} small number, $L>0$
be \textcolor{black}{an arbitrarily} large number. Let
$n\in\mathbb{N}\setminus\{1\}$.

Then there exists an $n$-dimensional periodic Riemannian manifold
$M$, which can be constructed in the explicit form, such that
\begin{gather}\label{spec1}
\sigma(-\Delta_{M})=[0,\infty)\setminus
\left(\cupl_{j=1}^{m^\prime}(\a_j^\delta,\b_j^\delta)\right),\quad
m\leq m^\prime\leq\infty
\end{gather}
where $(\a_j^\delta,\b_j^\delta)\subset[0,\infty)$ are pairwise
disjoint finite intervals satisfying
\begin{gather}\label{spec2}\begin{matrix}
|\a_j^\delta-\a_j|+|\b_j^\delta-\b_j|<\delta,& j={1,\dots,m}\\
(\a_j^\delta,\b_j^\delta)\subset(L,\infty),&
j={m+1,\dots,m^\prime}\end{matrix}
\end{gather}
\end{theorem}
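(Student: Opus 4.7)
The plan is to construct the manifold $M$ as $M\e$ for a suitable small parameter $\eps>0$, chosen only at the end, where $M\e$ has a period cell $\M\e$ built by attaching $m$ small ``bubble-and-neck'' appendages to a fixed compact base. Concretely, I would take a base $n$-manifold $\mathbf{B}$ (e.g.\ a flat torus with $m$ small open balls removed) and, for each $j=1,\dots,m$, glue along each removed ball a thin cylindrical tube $\mathbf{T}_j\e$ (length of order $1$, cross-sectional radius of order $r_j(\eps)$) leading into an $n$-dimensional ``bubble'' $\mathbf{B}_j\e$. The free geometric parameters of the $j$-th appendage (tube radius $r_j$, tube length $\ell_j$, and volume $v_j$ of the bubble) are to be tuned so that in a natural one-particle resonance picture on $(\mathbf{T}_j\e \cup \mathbf{B}_j\e)$ the pair $(\a_j,\b_j)$ emerges as the asymptotic gap edges.

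Next I would apply the Floquet--Bloch decomposition to reduce the study of $\sigma(-\Delta_{M\e})$ to the family of fiber operators $-\Delta_{\M\e}(\theta)$ on $\M\e$ with $\theta$-quasi-periodic boundary conditions, $\theta\in\mathbb{T}^*$. Each fiber operator has purely discrete spectrum $\{\lambda_k\e(\theta)\}_{k\in\mathbb{N}}$, and
\begin{gather*}
\sigma(-\Delta_{M\e})=\cupl_{k=1}^\infty \left\{\lambda_k\e(\theta):\theta\in\mathbb{T}^*\right\}.
\end{gather*}
I would then perform an asymptotic analysis of each band function $\theta\mapsto\lambda_k\e(\theta)$ as $\eps\to 0$. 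The heuristic is that the thin tube acts as a ``bottleneck'' that decouples the bubble $\mathbf{B}_j\e$ from the base. Thus for each $j$ a low-lying ``localized'' eigenvalue of the bubble is essentially independent of $\theta$, creating a flat band that pins the upper gap edge to $\b_j$; on the other hand the first ``extended'' band just below it is asymptotically located and $\theta$-dispersive, producing the lower gap edge at $\a_j$. The precise relation between $(r_j,\ell_j,v_j)$ and $(\a_j,\b_j)$ comes out of a Dirichlet-to-Neumann / matching-asymptotics argument on the attached appendage, and the tuning parameters are to be inverted to realize any prescribed pair $(\a_j,\b_j)$.

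To make the heuristic rigorous I would establish a Hausdorff-type spectral convergence
\begin{gather*}
\sigma(-\Delta_{M\e})\cap[0,L+1]\xrightarrow[\eps\to 0]{}[0,L+1]\setminus\cupl_{j=1}^{m}(\a_j,\b_j),
\end{gather*}
uniformly in the Floquet parameter $\theta$. The standard way is to combine (i) a min--max argument with carefully chosen quasimodes supported on the bubbles (to prove that the bands near $\b_j$ contain the expected eigenvalue) with (ii) a matching upper bound on the number and location of eigenvalues of $-\Delta_{\M\e}(\theta)$ below any level $\Lambda<L+1$ via a suitable Poincar\'e-type inequality on $\mathbf{B}$ together with an operator-theoretic bracketing that separates the ``extended'' and ``localized'' modes. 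Granted this uniform convergence, for all sufficiently small $\eps$ the spectrum of $-\Delta_{M\e}$ restricted to $[0,L+1]$ has exactly $m$ gaps whose edges lie within $\delta/2$ of the $(\a_j,\b_j)$; any additional gaps lie in $(L,\infty)$, which yields \eqref{spec1}--\eqref{spec2}.

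The main obstacle is step (iii), i.e.\ the \emph{uniform-in-$\theta$} spectral convergence. Ordinary min--max easily gives pointwise bounds on individual band functions, but to exclude spurious gaps (and to get the correct number $m$ of gaps in $[0,L]$) one must control all bands simultaneously over the whole Brillouin zone $\mathbb{T}^*$; equivalently, one needs a norm-resolvent (or operator-norm) estimate on the fiber operators with $\eps$-explicit and $\theta$-independent remainder. Managing this uniformity in $\theta$ while keeping the construction totally explicit in the $m$ tunable parameters is, in my view, the crux of the proof.
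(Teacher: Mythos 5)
Your overall strategy (a periodic manifold assembled from a base with $m$ weakly coupled resonators, Floquet--Bloch reduction, and a spectral convergence statement) is the right one, but the proposal leaves unproved precisely the two points that constitute the substance of the theorem, and the resonance picture it relies on is partly wrong. In the paper the coupling is achieved not by thin tubes but by gluing spherical bubbles of radius $b_j\eps$ directly onto the boundaries of removed balls of the critical radius $d_j\eps^{n/(n-2)}$ (resp.\ $e^{-1/(d_j\eps^2)}$ for $n=2$); the first gap edge $\sigma_j$ is the limit of the first Dirichlet eigenvalue of the $j$-th hole-plus-bubble region, but the second edge $\mu_j$ is \emph{not} a single-resonator quantity: the limit operator is the coupled system $\mathcal{A}$ on $L_2(\mathbb{R}^n)\oplus_j L_2(\mathbb{R}^n,\rho_j\,dx)$, and the $\mu_j$ are the zeros of $\mathcal{F}(\lambda)=1+\sum_{i}\sigma_i\rho_i/(\sigma_i-\lambda)$, so each upper edge depends on \emph{all} the parameters $(\sigma_i,\rho_i)$ simultaneously. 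Consequently your claim that the $j$-th appendage's parameters can be "inverted to realize any prescribed pair $(\a_j,\b_j)$" is exactly the nontrivial step: one must show that the map $(d_1,b_1,\dots,d_m,b_m)\mapsto(\sigma_1,\mu_1,\dots,\sigma_m,\mu_m)$ hits every admissible configuration of disjoint intervals, which in the paper requires solving the linear system $\sum_j \a_j\rho_j/(\b_k-\a_j)=1$, $k=1,\dots,m$, explicitly (Lemma \ref{lm1}) to get closed formulae for $d_j,b_j$. Your proposal offers no argument for this surjectivity, and with the single-resonator heuristic it would in general be false.

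The second gap is the one you yourself flag as the crux --- uniform-in-$\theta$ control of all bands --- for which you propose no mechanism. Note that the paper shows this uniformity is not actually needed: it proves Hausdorff convergence of $\sigma(-\Delta_{M\e})\cap[0,L]$ to $\sigma(\mathcal{A})\cap[0,L]$ by working with the \emph{large} period cell $\M\e$ (of unit size, containing $\eps^{-n}$ copies of the small cell), where the eigenfunction normalization and extension-operator machinery from homogenization theory apply; and it separately bounds the \emph{number} of gaps in $[0,L]$ by at most $m$ via Neumann bracketing on the \emph{small} cell ($\lambda^N_{m+2}(\M_0\e)\le a_{m+2}\e\to\infty$, Lemma \ref{lm4}). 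The combination of set-convergence plus the gap count (Proposition \ref{prop1}) yields the band-edge limits without any norm-resolvent or $\theta$-uniform band asymptotics. If you insist on the fiber-wise route you would indeed need the uniform estimates you describe, and producing them for a thin-tube geometry is substantially harder than the path the paper takes.
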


\begin{remark}
\textcolor{black}{In 1987 Y. Colin de Verdi\`{e}re obtained the
following remarkable result \citep{CDV1}: for arbitrary numbers
$0=\lambda_1<\lambda_2<\dots<\lambda_m$ ($m\in\mathbb{N}$) and
$n\in\mathbb{N}\setminus\left\{1\right\}$ there exists a
$n$-dimensional \textit{compact} Riemannian manifold $M$ such that
the first $m$ eigenvalues of the corresponding Laplace-Beltrami
operator $-\Delta_M$ are exactly
$\left\{\lambda_j\right\}_{j=1}^m$. Our main theorem can be
regarded as an analogue of this result for the case of
\textit{non-compact periodic} Riemannian manifolds.}
\end{remark}

\begin{remark}
Obviously it is sufficient to prove Theorem \ref{th1} only for
such intervals $(\a_j,\b_j)$ that are \textit{nonvoid} and their
\textit{closures} are pairwise disjoint and belong to
$(0,\infty)$. For definiteness we renumber the intervals in the
increasing order, i.e.
\begin{gather}\label{intervals+}
0<\a_1,\quad \a_j<\b_{j}< \a_{j+1},\ j=\overline{1,m-1},\quad
\a_m<\b_{m}<\infty
\end{gather}
\textcolor{black}{Proving} Theorem \ref{th1} we suppose that the
intervals $(\a_j,\b_j)$ satisfy (\ref{intervals+}).
\end{remark}

The idea how to construct the manifold $M$ comes from one of the
directions in the theory of homogenization of PDE's (for classical
problems of the homogenization theory we refer e.g. to the
monographs \citep{March,Tartar,ZKO}). This direction deals with
\textcolor{black}{problems of the following type}. Let $M\e$ be a
Riemannian manifold depending on a small parameter $\eps$: it
consists of one or several copies of some fixed manifold (we call
it "basic manifold") with many attached small surfaces whose
number tends to infinity as $\eps\to 0$. On $M\e$ some PDE (heat
equation, wave equation, Maxwell equations etc.) is considered.
The problem is to describe the behaviour of its solutions as
$\eps\to 0$. More exactly the problem is to find the equation on
the basic manifold (so-called "homogenized equation") whose
solutions approximate the solutions of the pre-limit equation as
$\eps\to 0$.

Firstly the problem of this type was studied by L.Boutet de Monvel
and E.Ya. Khruslov in \citep{Khrus1} where the behaviour of the
diffusion equation was investigated. The asymptotic behaviour of
the spectrum of the Laplace-Beltrami operator was studied in
\citep{DalMaso,Khrab3,Khrab4,Khrab5,Khrab6}, in these works only
compact manifolds were considered.

Let us describe briefly the construction of the manifold $M$
solving our main problem. We denote by $\Omega\e$ ($\eps$ is a
small parameter) a non-compact domain which is obtained by
removing from $\mathbb{R}^n$ a countable set of pairwise disjoint
balls $D_{ij}\e$ ($i\in\mathbb{Z}^n$, $j=1,\dots,m$). It is
supposed that $D_{ij}\e=D_{0j}\e+\eps i$ and
$D_{0j}\e\subset\square_0\e=\left\{x\in\mathbb{R}^n:\ 0\leq
x_\a\leq\eps,\ \forall\a\right\}$. We denote by $d_j\e$ the radius
of the ball $D_{ij}\e$.  Let $B_{ij}\e$ ($i\in\mathbb{Z}^n$,
$j=1,\dots,m$) be an $n$-dimensional surface (we call it "bubble")
obtained by removing a small segment from the $n$-dimensional
sphere of the radius $b_{j}\e$. Identifying the points of
$\partial D_{ij}\e$ and $\partial B_{ij}\e$ we glue the bubbles
$B_{ij}\e$ ($i\in\mathbb{Z}^n$, $j=1,\dots,m$) to the domain
$\Omega\e$ and obtain the $n$-dimensional manifold $M\e$:
\begin{gather*}
M\e={\Omega\e}\cup\left(\cupl_{i\in
\mathbb{Z}^n}\cupl_{j=1}^m{B_{ij}\e}\right)
\end{gather*}
The manifold $M\e$ (for $m=2$) is presented on the Figure
\ref{fig1}.

\begin{figure}[h]
\begin{center}
\scalebox{0.9}[0.9]{\includegraphics{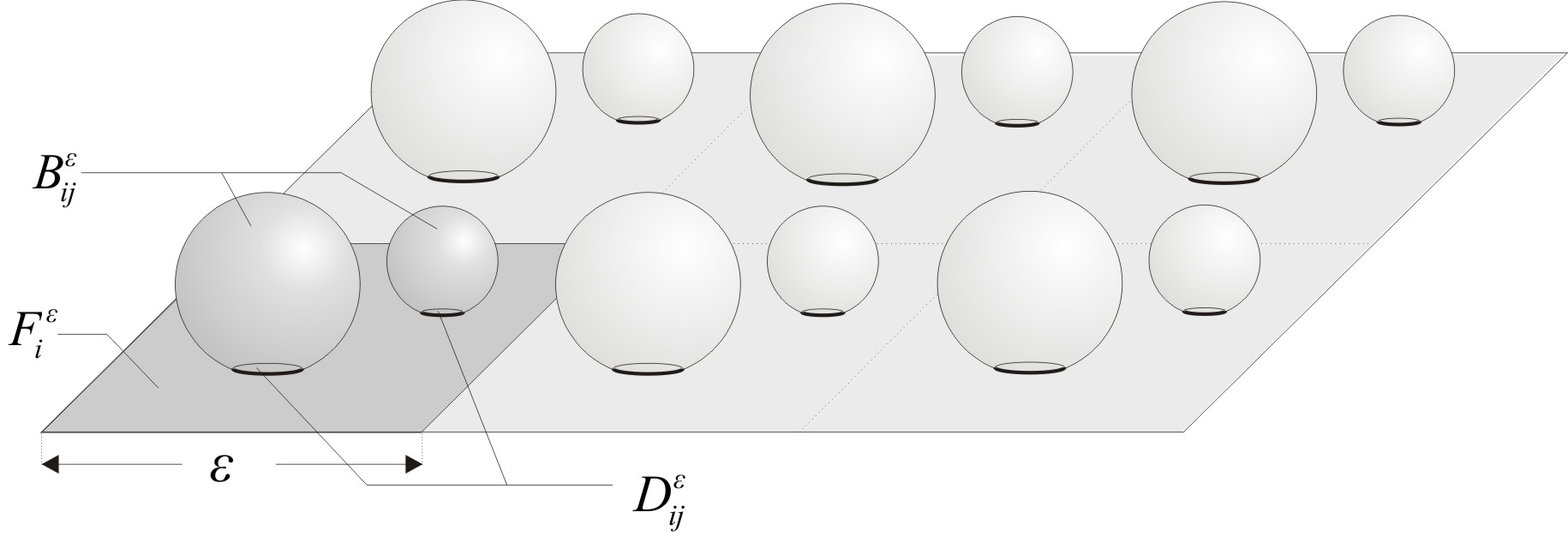}}
 \caption{\label{fig1}The manifold $M\e$ ($m=2$). The period cell $\M_i\e$ is tinted in more dark colour.}
\end{center}
\end{figure}

We \textcolor{black}{equip $M\e$ with} the Riemannian
\textcolor{black}{metric} $g\e$ which coincides with the flat
Euclidean \textcolor{black}{metric} in $\Omega\e$ and coincides with
the spherical \textcolor{black}{metric} on the bubbles $B_{ij}\e$.

The manifold $M\e$ is periodic, the set $$\M_i\e=F_i\e\cup
\left(\cupl_{j=1}^m B_{ij}\e\right)\text{, where }
F_i\e=\square_0\e\setminus\left(\cupl_{j=1}^m D_{0j}\e\right)+\eps
i$$ is a period cell (for any $i\in\mathbb{Z}^n$).

We set $d_j\e=d_j\eps^{{n\over n-2}}$ if $n>2$ and $
d_j\e=\exp\left(-{1\over d_j\eps^2}\right)$ if $n=2$,
$b_{j}\e=b_j\eps$. Here $d_j$,\ $b_j$ ($j=1,\dots,m$) are some
positive constants which will be chosen later.

We prove (see Theorem \ref{th2}) that the spectrum
$\sigma(-\Delta_{M\e})$ of the operator $-\Delta_{M\e}$ has at
least $m$ gaps when $\eps$ is small enough (i.e. when $\eps$ is
less than some $\eps_0$). We denote by $(\sigma_j\e,\mu_j\e)$
($j=1,\dots,m$) the first $m$ gaps, \textcolor{black}{by
$\mathcal{J}\e$ we denote the union of the remaining gaps (if
any)}:
\begin{gather}\label{req0}\sigma(-\Delta_{M\e})=
[0,\infty)\setminus\left[\left(\cupl_{j=1}^m(\sigma_j\e,\mu_j\e)\right)\cup
\mathcal{J}\e\right]\end{gather} Then
\begin{gather}\label{req1}
\forall j=1,\dots,m:\quad \liml_{\eps\to
0}\sigma_j\e=\sigma_j,\quad \liml_{\eps\to 0}\mu_j\e=\mu_j\\
\label{req2} \liml_{\eps\to 0}\inf\mathcal{J}\e=\infty
\end{gather}
where the numbers $\sigma_j$, $\mu_j$ depend in a special way on
$d_j$, $b_j$ and satisfy the conditions
\begin{gather*}
0<\sigma_1,\quad \sigma_{j}<\mu_{j}<\sigma_{j+1},\
j=\overline{1,m-1},\quad \sigma_m<\mu_m<\infty
\end{gather*}
The set $
[0,\infty)\setminus\left(\cupl_{j=1}^m(\sigma_j,\mu_j)\right)$
coincides with the spectrum of some operator $\mathcal{A}$ acting
in the Hilbert space
$H=L_2(\mathbb{R}^n)\underset{j=\overline{1,m}}\oplus
L_2(\mathbb{R}^n,\rho_j dx)$, where $\rho_j$ ($j=1,\dots,m$) are
some positive constant weights, by $dx$ we denote the density of
the Lebesgue measure.

\begin{remark}\label{rem00} \textcolor{black}{In the case when $\Omega\e$
is obtained by removing a system of balls from some
\textit{compact} domain $\Omega$ and $m=1$ (i.e. the removed balls
are equivalent, the attached bubbles are also equivalent)} the
behaviour of the spectrum of the Laplace-Beltrami operator with
Dirichlet boundary conditions on $\partial M\e=\partial\Omega$ was
studied in \citep{Khrab3}, also it was studied in \citep{Khrab5}
for another size of the removed balls, namely $\eps^{{n\over
n-2}}\ll d_j\e\ll\eps$ if $n>2$ and $ \exp\left(-{1\over
a\eps^2}\right)\ll d_j\e\ll\eps$ ($\forall a>0$) if $n=2$. The
same manifolds were also considered in \citep{Khrus2} where the
behaviour of attractors for semi-linear parabolic equations was
investigated.

It was proved in \citep{Khrab3} that the spectrum of the operator
$-\Delta_{M\e}^D$ (here $^D$ means the Dirichlet boundary
conditions) converges in the Hausdorff sense (see the definition
at the beginning of Section \ref{sec3}) to the spectrum of some
self-adjoint operator $\mathcal{A}^D$ acting in the space
$L_2(\Omega)\oplus L_2(\Omega,\rho dx)$, where $\rho>0$ is some
constant weight. The spectrum $\sigma(\mathcal{A})$ of the
operator $\mathcal{A}^D$ has the form
\begin{gather*}
\sigma(\mathcal{A}^D)=\big\{\sigma\big\}\cup\left\{\lambda_k^{D,-}:\
k=1,2,3...\right\} \cup\left\{\lambda_k^{D,+}:\ k=1,2,3...\right\}
\end{gather*}
where  $\sigma>0$ is a point of the essential spectrum, the
nondecreasing sequences $\lambda^{D,-}_k$, $\lambda^{D,+}_k$
belong to the discrete spectrum, moreover
$\liml_{k\to\infty}\lambda^{D,-}_k=\sigma$,
$\liml_{k\to\infty}\lambda^{D,+}_k=\infty$ and
$\lambda^{D,+}_1>\mu$, where $\mu=\sigma+\sigma\rho$.
\textcolor{black}{Thus,}
$(\sigma,\mu)\cap\sigma(\mathcal{A}^D)=\varnothing$,
and\textcolor{black}{, therefore,} for an arbitrarily small
$\delta>0$ the interval $(\sigma+\delta,\mu-\delta)$ does not
intersect with the spectrum of the operator $-\Delta_{M\e}^D$
\textcolor{black}{when $\eps=\eps(\delta)$ is small enough}. A
similar result is valid for the Neumann Laplacian
$-\Delta_{M\e}^N$: the spectrum of the corresponding limit
operator $\mathcal{A}^N$ consists of the point $\sigma$ and two
nondecreasing sequences $\lambda^{N,-}_k$, $\lambda^{N,+}_k$ such
that $\liml_{k\to\infty}\lambda^{N,-}_k=\sigma$,
$\liml_{k\to\infty}\lambda^{N,+}_k=\infty$. Moreover
$\lambda_1^{N,+}=\mu$. It is important that  $\sigma$, $\rho$ are
independent of the shape of the domain $\Omega$ and the type of
the boundary conditions. These facts suggest that in the case
$\Omega=\mathbb{R}^n$ the spectrum $\sigma(-\Delta_{M\e})$ has a
gap \textcolor{black}{when $\eps$ is small enough}, and this gap is
close to the interval $(\sigma,\mu)$.
\end{remark}

The proof of Theorem \ref{th2} consists of three steps. Firstly we
prove that the set $
[0,\infty)\setminus\left(\cupl_{j=1}^m(\sigma_j,\mu_j)\right)$
coincides with the spectrum $\sigma(\mathcal{A})$ of the operator
$\mathcal{A}$. Then we make the main step: we show that for an
arbitrary $L\notin\cupl_{j=1}^m\left\{\mu_j\right\}$ the set
$\sigma(-\Delta_{M\e})\cap[0,L]$ converges in the Hausdorff sense
to the set $\sigma(\mathcal{A})\cap[0,L]$ as $\eps\to 0$.
\textcolor{black}{Finally,} we prove that within an arbitrary finite
interval $[0,L]$ the spectrum $\sigma(-\Delta_{M\e})$ has
\textcolor{black}{at most $m$ gaps} \textcolor{black}{when $\eps$ is
small enough}. Together with the Hausdorff convergence this fact
will imply the properties (\ref{req0})-(\ref{req2}) (see
Proposition \ref{prop1} at the beginning of Section \ref{sec3}).

We note that the \textcolor{black}{metric} $g\e$ is continuous but piecewise-smooth.
However one can approximate it by a smooth \textcolor{black}{metric} $g^{\eps\rho}$
that differs from $g\e$ only in a small $\rho$-neighbourhoods of
$\partial B_{ij}\e$. Moreover when $\rho=\rho(\eps)$ is
sufficiently small then the spectra of the operator
$-\Delta_{(M\e,g^{\eps\rho})}$ and the operator $-\Delta_{M\e}$
have the same limit as $\eps\to 0$ (here
$-\Delta_{(M\e,g^{\eps\rho})}$ is the Laplace-Beltrami operator on
$M\e$ equipped with the \textcolor{black}{metric} $g^{\eps\rho}$). For precise
statement see Remark \ref{rem22} at the end of the paper.

In order to omit cumbersome calculations further we will work with
the \textcolor{black}{metric} $g\e$.

Now, let $\delta>0$ be arbitrarily small number, $L>0$ be
arbitrarily large number. It follows from Theorem \ref{th2} that
there is such small $\eps=\eps(\delta,L)$ that the structure of
the spectrum $\sigma(-\Delta_{M\e})$ is as follows:
$\sigma(-\Delta_{M\e})$ has $m$ gaps whose edges are located in
$\delta$-neighbourhoods of the edges of some fixed intervals
$(\sigma_j,\mu_j)$ ($j=1,\dots,m$) while the remaining gaps (if
any) belong to $(L,\infty)$. So we set $M=M\e$,
$\eps=\eps(\delta,L)$. In order to continue the proof of Theorem
\ref{th1} we have to prove that for \textcolor{black}{ arbitrary}
preassigned intervals $(\a_j$, $\b_j)$ satisfying
(\ref{intervals+}) it is possible to choose such $d_j$, $b_j$ that
\begin{gather}\label{ab}
\sigma_j=\a_j,\ \mu_j=\b_j,\ j=\overline{1,m}
\end{gather}
We will prove this fact and present the exact formulae for the
constants $d_j$, $b_j$ (see Theorem \ref{th3}).

The paper is organized as follows. In Section \ref{sec1} we recall
some definitions and facts from the spectral theory for the
Laplace-Beltrami operator. In Section \ref{sec2} we construct the
manifold $M\e$ and formulate Theorem \ref{th2} describing the
behaviour of $\sigma(-\Delta_{M\e})$ as $\eps\to 0$. Theorem
\ref{th2} is proved in Section \ref{sec3}. And\textcolor{black}{,
finally, }in Section \ref{sec4} we present the formulae for the
parameter $d_j$, $b_j$.

\section{\label{sec1}Theoretical background}

In this section we present the definitions and some well-known
results related to the Laplace-Beltrami operator and periodic
manifolds. For more details on the Laplace-Beltrami operator see
e.g. \citep{Taylor}, for more details on periodic manifolds we
refer to \citep{Post_PHD}.

Let $M$ be an $n$-dimensional Riemannian manifold with the \textcolor{black}{metric}
$g$. By $g_{\a\b}$ we denote the components of $g$ in local
coordinates $(x_1,\dots,x_n)$.

As usual we denote by $L_2(M)$ the Hilbert space of square
integrable (with respect to Riemannian measure) functions on $M$.
The scalar product and norm are defined by
$$(u,v)_{L_2(M)}={\intl_{M} u\bar{v}dV},\quad  \|u\|_{L_{2}(M)}=\sqrt{(u,u)_{L_{2}(M)}}$$ where
$dV=\sqrt{\det g}d x_1{\dots}d x_n$ is the density of
\textcolor{black}{the Riemannian measure} on $M$.

By $C^\infty(M)$ (resp. $C_0^\infty(M)$) we denote the space of
smooth (resp. smooth and compactly supported) functions on $M$.

If the manifold $M$ (possibly non-compact) has an empty boundary
then we define  the \textit{Laplace-Beltrami operator} $-\Delta_M$
on $M$ in the following way. By $\bar{\eta}_M[u,v]$ we denote the
closure of the sesquilinear form $\eta_M[u,v]$ defined by the
formula:
\begin{gather}\label{eta}
\eta_M[u,v]=(\nabla u,\nabla v)_{L_2(M)}\equiv\intl_M (\nabla
u,\nabla \bar v) dV
\end{gather}
with $\mathrm{dom} (\eta_M)=C^\infty_0(M)$. Here $(\nabla u,\nabla
\bar v)$ is the scalar product of the vectors $\nabla u$ and
$\nabla \bar v$ with respect to the \textcolor{black}{metric} $g$: in local
coordinates $(\nabla u,\nabla \bar v)=\ds\suml_{\a,\b=1}^n
g^{\a\b}\ds{\partial u\over\partial x_\a}{\partial
\bar{v}\over\partial x_\b}$, where $g^{\a\b}$ are the components
of the tensor inverse to $g_{\alpha\beta}$. The form $\bar{\eta}$
is densely defined, closed and positive (by the way
$\mathrm{dom}(\bar\eta_M)=H^1(M)\equiv\left\{u\in L_2(M):\ \nabla
u\in L_2(M)\right\}$). Then there exists the unique self-adjoint
and positive operator $-\Delta_M$ associated with the form
$\bar\eta_M [u,v]$, i.e.
\begin{gather*}
(-\Delta_M u,v)_{L_2(M)}=\bar\eta_M[u,v]\text{ for all }u\in
\mathrm{dom}(\Delta_M),\ v\in \mathrm{dom}(\bar\eta_M)
\end{gather*}
For \textcolor{black}{a smooth function} $u$ the Laplace-Beltrami
operator is given in local coordinates by the formula
\begin{gather}\label{local}
-\Delta_{M}u=-\suml_{\a,\b=1}^n {1\over \sqrt{\det
g}}{\partial\over\partial x_\a}\left(g^{\a\b}\sqrt{\det g}
{\partial u\over\partial x_\beta}\right)
\end{gather}

If $M$ is a compact manifold with a piecewise smooth boundary
$\partial M$ we define the \textit{Laplace-Beltrami operator with
Neumann} (resp. \textit{Dirichlet}) \textit{boundary conditions}
$-\Delta_M^N$ (resp. $-\Delta_M^D$) as the operator associated
with the sesquilinear form  $\bar\eta^N_M$ (resp. $\bar\eta^D_M$)
which is the closure of the form $\eta^N_M$ (resp. $\eta^D_M$)
defined by \textcolor{black}{formula} (\ref{eta}) and by the
definitional domain $\mathrm{dom} (\eta_M^N)=C^\infty(M)$ (resp.
$\mathrm{dom} (\eta^D_M)=C_0^\infty({M})$).

The spectra of the operators $-\Delta_M^N$ and $-\Delta_M^D$ are
purely discrete. We denote by
$\left\{\lambda_k^N(\M)\right\}_{k\in\mathbb{N}}$ (resp.
$\left\{\lambda_k^D(\M)\right\}_{k\in\mathbb{N}}$)  the sequence
of eigenvalues of $-\Delta_M^N$ (resp. $-\Delta_M^D$) written in
the increasing order and repeated according to their
multiplicity.\medskip

Now we present \textcolor{black}{the concept} of periodic Riemannian
manifolds.

We say that \textit{the group $\Gamma$ acts on the manifold $M$}
if there is a map $\Gamma\times M\to M$ (denoted $(\gamma,
x)\mapsto \gamma\cdot x$) such that
$\forall\gamma_1,\gamma_2\in\Gamma$, $\forall x\in M$ one has
$(\gamma_1*\gamma_2)\cdot x=\gamma_1\cdot(\gamma_2\cdot x)$, where
$*$ is the group operation, and $\forall x\in M$ one has
${id}\cdot x=x$, where ${id}$ is the identity element of $\Gamma$.

The Riemannian manifold $M$ is called \textit{periodic} (or more
precisely \textit{$\Gamma$-periodic}) if a discrete finitely
generated abelian group $\Gamma$ acts on $M$, moreover
\begin{itemize}
\item $\Gamma$ acts \textit{isometrically} on $M$, i.e.
$\forall\gamma\in\Gamma$: $\gamma\cdot$ is the isometrical map,
\item $\Gamma$ acts \textit{properly discontinuously} on $M$, i.e.
for each $x\in M$ there exists a neighbourhood $U_x$ such that the
sets $\gamma\cdot U_x$ $(\gamma\in\Gamma)$ are pairwise
disjoint,\item $\Gamma$ acts \textit{co-compactly} on $M$, i.e. the
quotient space $M/\Gamma$ is compact.
\end{itemize}

A compact subset $\mathbf{M}\subset M$ is called a \textit{period
sell} if $\cupl_{\gamma\in\Gamma}\gamma\cdot \mathbf{M}=M$ and
$\mathbf{M}$ is a closure of an open connected domain $\mathbf{D}$
such that $\forall\gamma\in\Gamma,\ \gamma\not={id}:
\mathbf{D}\cap \gamma\cdot \mathbf{D}=\varnothing$.

For convenience \textcolor{black}{throughout our work}  we will use
the same notation $\gamma$ for the element $\gamma\in\Gamma$ and
the corresponding map $\gamma\cdot:M\to M$.

By $\hat\Gamma$ we denote the dual group of $\Gamma$, i.e. the
group of homomorphism from $\Gamma$ into $\mathbb{S}^1$. We remark
that if $\Gamma$ is isomorphic to $\mathbb{Z}^n$ (as for the
manifold $M\e$, which will be considered in the next section) then
$\hat\Gamma$ is isomorphic to the $n$-dimensional torus
$\mathbb{T}^n=\left\{\theta=(\theta_1,\dots,\theta_r)\in
\mathbb{C}^n:\ \forall \a\ |\theta_\a|=1\right\}$.

Let $\theta\in\hat\Gamma$. We define \textit{the Laplace-Beltrami
operator with $\theta$-periodic boundary conditions}
$-\Delta^{\theta}_{\M}$ in the following way. By
$C^\infty_\theta(\mathbf{M})$ we denote the space of functions
$u\in C^\infty(\mathbf{M})$ satisfying
\begin{gather*}
u(\gamma x)=\overline{\theta(\gamma)}u(x)
\end{gather*}
for each $x\in\partial\mathbf{M}$ and for each $\gamma\in\Gamma$
such that $\gamma x\in \partial\mathbf{M}$. Then we define the
operator $-\Delta_{\M}^\theta$ as the operator associated with the
form $\bar\eta_{\M}^\theta$ which is the closure of the form
$\eta^\theta_{\M}$  defined by \textcolor{black}{formula}
(\ref{eta}) (with $\mathbf{M}$ instead of $M$) and by the
definitional domain $\mathrm{dom}
(\eta_{\M}^\theta)=C^\infty_\theta(\mathbf{M})$.

The operator $-\Delta_{\M}^\theta$ has purely discrete spectrum.
We denote by
$\left\{\lambda_k^\theta(\M)\right\}_{k\in\mathbb{N}}$ the
sequence of eigenvalues of $-\Delta_{\M}^\theta$ written in the
increasing order and repeated according to their multiplicity.

For any $\theta\in\hat\Gamma$ the following inequality holds:
\begin{gather}\label{enclosure}
\lambda_k^N(\M)\leq\lambda_k^\theta(\M)\leq\lambda_k^D(\M)
\end{gather}

It turns out that analysis of the spectrum $\sigma(-\Delta_M)$ of
the operator $-\Delta_M$ on the periodic manifold $M$ can be
reduced to analysis of the spectra $\sigma(-\Delta^\theta_{\M})$
of the operators $\sigma(-\Delta^\theta_{\M})$, $\theta\in
\hat\Gamma$. Namely one has the following fundamental
result.\smallskip

\noindent\textbf{Theorem.}\textit{ Let $M$ be $\Gamma$-periodic
manifold with a period cell $\mathbf{M}$. Then
\begin{gather}\label{fund}
\sigma(-\Delta_M)=\cupl_{k\in \mathbb{N}}\mathcal{J}_k(M)
\end{gather}
where $\mathcal{J}_k({M})=\left\{\lambda_k^\theta(\mathbf{M}):\
\theta\in\hat\Gamma\right\}$, $k\in\mathbb{N}$ are compact
intervals. }

\section{\label{sec2}Construction of the manifold}
In this section we construct the manifold $M\e$ and describe the
behaviour of the spectrum $\sigma(-\Delta_{M\e})$ of the
Laplace-Beltrami operator $-\Delta_{M\e}$ as $\eps\to 0$.

Let $\left\{D_{ij}\e:\ i\in \mathbb{Z}^n,\ j=1,...,m\right\}$ be
the system of pairwise disjoint balls in $\mathbb{R}^n$ ($n\geq
2$) depending on small parameter $\eps>0$. We suppose that:
\begin{itemize}
\item[1)] the balls $D_{0j}\e$, $j=1,...,m$ belong to the cube
$\square_0\e=\left\{x\in\mathbb{R}^n:\ 0\leq x_\a\leq\eps,\
\forall\a\right\}$;

\item[2)] $\forall j=1,...,m$:
$\kappa\eps\leq\mathrm{dist}\left(D_{0j}\e,\
\partial\square_0\e\cup\left(\cupl_{l\not=j}D_l\e\right)\right)$,
where the constant $\kappa>0$ is independent of $\eps$;

\item[3)] $\forall i\in \mathbb{Z}^n,\ \forall j=1,...,m$:
$D_{ij}\e=D_{0j}\e+\eps i$.

\end{itemize}

By $x_{ij}\e$ we denote the centre of $D_{ij}\e$, by $d_j\e$ we
denote the radius of $D_{ij}\e$ (the third condition above implies
that the radius of $D_{ij}\e$ depends only on the index $j$).

We denote by $B_{ij}\e$ the truncated $n$-dimensional sphere (we
call it "bubble") of the radius $b_{j}\e$:
\begin{gather*}
B_{ij}\e=\left\{
(\theta_1,\theta_2,...,\theta_n):\theta_1\in[0,2\pi),\
\theta_k\in[0,\pi),\ k=2,...,n-1,\
\theta_{n}\in[\Theta_j\e,\pi]\right\}
\end{gather*}
Here $\Theta_{j}\e=\ds\arcsin\left({d_{j}\e\over b_{j}\e}\right)$,
where $b_j\e$ ($j=1,\dots,m$) are positive numbers satisfying
$b_j\e>d_j\e$.

Let us introduce in $\Omega\e$ the spherical coordinates
$(\theta_1,\dots,\theta_n,r)$ with the origin at $x_{ij}\e$. Here
$r$ is the distance to $x_{ij}\e$. Identifying the points
$\big(\theta_1,\dots,\theta_{n-1},d_{j}\e)\in\partial D_{ij}\e$
and $\big(\theta_1,\dots,\theta_{n-1},\Theta_j\e\big)\in\partial
B_{ij}\e$ we glue the bubbles $B_{ij}\e$ to the perforated domain
$\Omega\e$ and obtain an $n$-dimensional manifold $M\e$:
\begin{gather}\label{manifold}
M\e={\Omega\e}\cup\left(\cupl_{i\in
\mathbb{R}^n}\cupl_{j=1}^m{B_{ij}\e}\right)
\end{gather}
The manifold $M\e$ is presented on the Figure \ref{fig1}. By
$\tilde x$ we denote the points of $M\e$. If the point $\tilde x$
belongs to $\Omega\e$ sometimes we will write $x$ instead of
$\tilde x$ having in mind a corresponding point in $\mathbb{R}^n$.

Clearly $M\e$ can be covered by a system of charts and suitable
local coordinates $(x_1,\dots,x_n)\mapsto \tilde x\in M\e$ can be
introduced. In particular in a small neighbourhood of $\partial
B_{ij}\e$ we introduce them in the following way (below by
$U_{ij}\e$ we denote this neighbourhood):
\begin{gather}\label{coord}
\begin{matrix}
x_k=\theta_k,\ k=1,\dots,n-1,\\
x_n=\begin{cases}
r-d_{j}\e,&\tilde x=(\theta_1,\dots,\theta_{n-1},r)\in\Omega\e\cap U_{ij}\e,\\
-b_{j}\e\left(\theta_n-\Theta_{j}\e\right),&\tilde
x=(\theta_1,\dots,\theta_{n-1},\theta_n)\in B_{ij}\e\cap U_{ij}\e.
\end{cases}
\end{matrix}
\end{gather}
(that is $\partial B_{ij}\e=\left\{(x_1,\dots,x_n):\
x_n=0)\right\}$).

We \textcolor{black}{equip $M\e$ with} the Riemannian
\textcolor{black}{metric} $g\e$ that coincides with the flat
Euclidean \textcolor{black}{metric} on $\Omega\e$ and coincides with
the spherical \textcolor{black}{metric} on the bubbles $B_{ij}\e$.
This last means that in the spherical coordinates
$(\theta_1,\dots,\theta_n)$ the components $g_{\a\b}\e$ of the
\textcolor{black}{metric} $g\e$ have the form
\begin{gather*}
g_{\a\b}\e=\delta_{\a\b}\left(b_{j}\e\right)^2
\prod\limits_{k=\a+1}^{n}\sin^2 \theta_k,\quad
\a,\b=\overline{1,n}
\end{gather*}
(for $\a=n$ we set $\prod\limits_{k=\a+1}^{n}\sin^2 \theta_k$:=1).
Here $\delta_{\a\b}$ is the Kronecker delta.

The \textcolor{black}{metric} $g\e$ is continuous and piecewise
smooth: in the coordinates $(x_1,\dots,x_n)$, which are introduced
above in the neighbourhood of $\partial B_{ij}\e$ by
\textcolor{black}{formulae} (\ref{coord}), the components
$g_{\a\b}\e=g_{\a\b}\e(x_1,\dots,x_n)$ of the
\textcolor{black}{metric} $g\e$ have the form:
\begin{gather}\label{metrics}
g_{\a\b}\e=\begin{cases}g_{+_{\a\b}}^{\eps},&x_n\geq
0,\\g_{-_{\a\b}}\e,& x_n< 0,
\end{cases}\ \a,\b=\overline{1,n-1},\quad
g_{n\beta}=\delta_{n\beta}\\\label{metrics+} \text{where}\quad
g_{+_{\a\b}}^{\eps}=\delta_{\a\b}\left(x_n+d_{j}\e\right)^2
\prod\limits_{k=\a+1}^{n-1}\sin^2 \theta_k,\quad
g_{-_{\a\b}}\e=\delta_{\a\b}\left(b_{j}\e\right)^2\sin^2\left(\ds{|x_n|\over
b_j\e}+\Theta_j\e\right) \prod\limits_{k=\a+1}^{n-1}\sin^2
\theta_k
\end{gather}
It is clear that as $x_n=0$ (i.e. on $\partial B_{ij}\e$) the
coefficients $g_{\a\b}\e$ lose smoothness.

Remark that $g\e$ can be approximated by a smooth \textcolor{black}{metric}
$g^{\eps\rho}$ that differs from $g\e$ only in a small
$\rho$-neighbourhood of $\partial B_{ij}\e$, moreover when
$\rho=\rho(\eps)$ is sufficiently small then the spectra
$\sigma(-\Delta_{M\e})$ and $\sigma(-\Delta_{(M\e,g^{\eps\rho})})$
have the same limit as $\eps\to 0$ (for more precise statement see
Remark \ref{rem22}). However in order to omit cumbersome
calculations further we will work with the \textcolor{black}{metric} $g\e$.

\begin{remark} It is easy to see that the manifold $M\e$ can be immersed into the space
$\mathbb{R}^{n+1}$ via the following map $\widehat F\e:
M\e\to\widehat{M}\e\subset \mathbb{R}^{n+1}$ (below
$x\in\mathbb{R}^n,\ z\in\mathbb{R},\ (x,z)\in\mathbb{R}^{n+1})$:
\begin{itemize}
\item[-] if $\tilde x=x\in\Omega\e$ then $\widehat F\e(\tilde
x)=(x,0)$, \item[-] if $\tilde x=(\theta_1,\dots,\theta_n)\in
B_{ij}\e$ then $\widehat F\e(\tilde x)=(x_1,\dots, x_n,z)$, where
\end{itemize}
\quad$
x_1=\big(x_{ij}\e\big)_1+b_{j}\e\prod\limits_{l=1}^n\sin\theta_l,\quad
x_k=\big(x_{ij}\e\big)_k+b_{j}\e
\cos\theta_{k-1}\prod\limits_{l=k}^n\sin\theta_l\
(k=\overline{2,n}),\quad z=b_{j}\e(\cos\Theta_{j}\e-\cos\theta_n)
$\medskip\\ Note: one should not confuse $(x_1,\dots,x_n)$ with
the local coordinates introduced above in a neighbourhood of
$\partial B_{ij}\e$.

\textcolor{black}{Thus,} $\widehat{F}\e$ maps $B_{ij}\e$ onto the
surface $\widehat{B}_{ij}\e$ which is obtained by removing from
the sphere $\widehat{\mathcal
B}_{ij}\e=\left\{(x,z)\in\mathbb{R}^{n+1}:\
|x-x_{ij}\e|^2+(z-b_j\e\cos\Theta_{j}\e)^2=(b_j\e)^2\right\}$ the
segment $\big\{(x,z)\in\widehat{\mathcal B}_{ij}\e:\ z< 0\big\}$.

The map $\widehat F\e$ is a local {homeomorphism}, i.e. for any
$\tilde x\in M\e$ there is a neighbourhood $U(\tilde x)\subset
M\e$ such that $\widehat F\e|_{U(\tilde x)}$ is a homeomorphism
(and even diffeomorphism if $\tilde x\notin\cupl_{i,j}\partial
B_{ij}\e$). If the surfaces $\widehat{B}_{ij}\e$
($i\in\mathbb{Z}^n$, $j=1,\dots,m$) are pairwise disjoint (e.g. if
$b_j\e<d_j\e+\kappa\eps/2$) then $\widehat F\e$ is a global
homeomorphism. Furthermore $\widehat{F}\e$ is an isometric map: if
$\hat g\e$ is a \textcolor{black}{metric} on $\widehat M\e$ which is generated by
the Euclidean \textcolor{black}{metric} in $\mathbb{R}^{n+1}$ then $g\e$ coincides
with the pull-back $(\widehat F\e)^*\hat{g}\e$.
\end{remark}

Let the group $\Gamma\e\cong\mathbb{Z}^n$ act on $M\e$ by the
following rule (below by $\gamma_k\e$, $k\in\mathbb{Z}^n$ we
denote the elements of $\Gamma\e$):
\begin{itemize}
\item[-] if $\tilde x=x\in\Omega\e$ then $\gamma_k\e$ maps $\tilde
x$ into the point $\gamma_k\tilde x=x+k\eps\in\Omega\e$,

\item[-] if $\tilde x=(\theta_1,\dots,\theta_n)\in B_{ij}\e$ then
$\gamma_k\e$ maps $\tilde x$ into the point $\gamma_k\e\tilde x\in
B_{i+k,j}$ with the same angle coordinates
$(\theta_1,\dots,\theta_n)$.
\end{itemize}
Obviously $M\e$ is $\Gamma\e$-periodic Riemannian manifold. For an
arbitrary $i\in\mathbb{Z}^n$ the set
\begin{gather}\label{cell}
\M_i\e=F_i\e\cup \left(\cupl_{j=1}^m B_{ij}\e\right)\text{,\quad
where } F_i\e=\left\{\tilde x\in\Omega\e:\ x-\eps
i\in\square_0\e\setminus\left(\cupl_{j=1}^m
D_{0j}\e\right)\right\}
\end{gather}
is a period cell.

We assume that the radii of the holes and bubbles are the
following:
\begin{gather}\label{d_size}
d_j\e=\begin{cases}\ds d_j\eps^{{n\over n-2}},&
n>2\\\ds\exp\left(-{1\over
d_j\eps^2}\right),&n=2\end{cases}\\\label{b_size} b_j\e=b_j\eps
\end{gather}
where $d_j,\ b_j$ ($j=1,\dots,m$) are some positive constants (we
choose them later in Section \ref{sec4}).

\textcolor{black}{We will use the following notations:}
\begin{gather*}
R_{ij}\e=\left\{\tilde x\in\Omega\e:\ d_{j}\e\leq|x-x_{ij}\e|<d_{j}\e+{\kappa\eps\over 2}\right\},\\
G_{ij}\e={R_{ij}\e\cup B_{ij}\e},\\
S_{ij}^{\eps}=\left\{\tilde x\in\Omega\e:\
|x-x_{ij}\e|=d_{j}\e+{\kappa\eps\over
2}\right\}\equiv\partial G_{ij}\e,\\
\omega_n\text{ is the volume of }n\text{-dimensional unit sphere}
\end{gather*}

According to the notations introduced above in Section \ref{sec1}
we denote by
$\left\{\lambda_{k}^D(G_{ij}\e)\right\}_{k\in\mathbb{N}}$ the
sequence of the eigenvalues of the operator $-\Delta_{G_{ij}\e}^D$
which is the Laplace-Beltrami operator in $G_{ij}\e$ with
Dirichlet boundary conditions on $S_{ij}\e$. It is clear that
$\left\{\lambda_{k}^D(G_{ij}\e)\right\}_{k\in\mathbb{N}}$ depends
only on the index $j$.

One can prove (see Lemma \ref{lm0} below) that
$$\forall j=1,\dots,m:
\quad \liml_{\eps\to 0}\lambda_1(G_{ij}\e)=\sigma_j$$ where
\begin{gather}\label{sigma}\sigma_j=
\begin{cases}
\ds{d_j\over 4b_j^2},&n=2\\\ds {n-2\over 2}\cdot
{d_j^{n-2}\omega_{n-1}\over b_j^{n}\omega_{n}},&n>2
\end{cases}
\end{gather}
Note that in spite of the fact that the diameter of $G_{ij}\e$
converges to zero as $\eps\to 0$, $\lambda_{1}(G_{ij}\e)$ does not
blow up as $\eps\to 0$. This is due to a weak connection between
$B_{ij}\e$ and $R_{ij}\e$.

We assume that the coefficients $d_j$ and $b_j$ are such that
$\sigma_i\not=\sigma_j$ if $i\not=j$. For definiteness we suppose
that $\sigma_j<\sigma_{j+1}$, $j=1,\dots,n-1$.

We introduce the Hilbert space
$$H=L_2(\mathbb{R}^n)\underset{j=\overline{1,m}}\oplus
L_2(\mathbb{R}^n,\rho_j dx)$$ where by $dx$ we denote the density
of the Lebesgue measure, the constant weights $\rho_j$,
$j=1,\dots,m$ are defined by the formula
\begin{gather}\label{rho}
\rho_j=(b_j)^n\omega_{n}
\end{gather}
Since $\ds\liml_{\eps\to 0}\left({d_{j}\e/ b_{j}\e}\right)=
0$\textcolor{black}{, then} $\rho_j=\liml_{\eps\to
0}\eps^{-n}|B_{ij}\e|$ (here by $|\cdot|$ we denote the Riemannian
volume).

And\textcolor{black}{, finally,} let us consider the following
equation (with unknown $\lambda\in\mathbb{R}$):
\begin{gather}\label{mu_eq}
\mathcal{F}(\lambda)\equiv
1+\suml_{j=1}^m{\sigma_j\rho_j\over\sigma_j-\lambda}=0
\end{gather}
It is easy to obtain (see the proof of Theorem \ref{th2}) that
this equation has exactly $m$ roots $\mu_j$ ($j=1,\dots,m$),
moreover one can renumber them in such a way that
$$
\sigma_j<\mu_j<\sigma_{j+1},\ j=\overline{1,m-1},\quad
\sigma_m<\mu_m<\infty
$$
By the way if $m=1$ then $\mu_1=\sigma_1+\sigma_1\rho_1$ (cf.
Remark \ref{rem00}).

Now we are able to formulate the theorem describing the behaviour
of $\sigma(-\Delta_{M\e})$.
\begin{theorem}\label{th2}
The spectrum $\sigma(-\Delta_{M\e})$ of the operator
$-\Delta_{M\e}$ has the following structure \textcolor{black}{when
$\eps$ is small enough} (i.e. \textcolor{black}{when
$\eps<\eps_0$}):
\begin{gather}\label{Haus0}
\sigma(-\Delta_{M\e})=[0,\infty)\setminus\left[\left(\cupl_{j=1}^m(\sigma_j\e,\mu_j\e)\right)\cup
\mathcal{J}\e\right]
\end{gather}
Here $\mathcal{J}\e$ is a union of some open finite intervals
(possibly $\mathcal{J}\e=\varnothing$) and
\begin{gather*}
0<\sigma\e_1,\quad \sigma\e_{j}<\mu\e_{j}<\sigma\e_{j+1},\
j=\overline{1,m-1},\quad\sigma_m\e<\mu\e_m<\inf\mathcal{J}\e
\end{gather*}
Moreover
\begin{gather}\label{Haus1}
\forall j=1,\dots,m:\quad \lim_{\eps\to
0}\sigma_j\e=\sigma_j,\quad \lim_{\eps\to
0}\mu_j\e=\mu_j\\\label{Haus2} \liml_{\eps\to
0}\inf\mathcal{J}\e=\infty
\end{gather}

The set
$[0,\infty)\setminus\left(\cupl_{j=1}^m(\sigma_j,\mu_j)\right)$
coincides with the spectrum $\sigma(\mathcal{A})$ of the
self-adjoint operator $\mathcal{A}$ which acts in $H$ and is
defined by the formulae
\begin{gather}
\label{A} \mathcal{A} U= \left(\begin{matrix} -\Delta_{\mathbb{R}^n} u+\ds\suml_{j=1}^m\sigma_j\rho_j(u-u_j)\\
\sigma_1(u_1-u)\\\sigma_2(u_2-u)\\\dots\\\sigma_m(u_m-u)
\end{matrix}\right),\ U=\left(\begin{matrix}u\\u_1\\u_2\\\dots\\u_m\end{matrix}\right)\in
\mathrm{dom}(\mathcal{A})\\\label{A_dom}\mathrm{dom}(\mathcal{A})=\mathrm{dom}({\Delta_{\mathbb{R}^n}})\underset{j=\overline{1,m}}\oplus
L_2(\mathbb{R}^n,\rho_j dx)
\end{gather}
\end{theorem}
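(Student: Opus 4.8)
The plan is to prove Theorem \ref{th2} in the three steps already announced in the introduction, exploiting the Floquet--Bloch decomposition \eqref{fund} together with a homogenization analysis of the fibre operators $-\Delta^\theta_{\M\e}$.

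First I would identify the limit set. One shows that the equation $\mathcal{F}(\lambda)=0$ in \eqref{mu_eq} has exactly $m$ simple roots $\mu_j$ interlacing the $\sigma_j$: indeed $\mathcal{F}$ is smooth and strictly increasing on each interval $(\sigma_j,\sigma_{j+1})$ (and on $(\sigma_m,\infty)$), with $\mathcal{F}\to-\infty$ as $\lambda\downarrow\sigma_j$ and $\mathcal{F}\to+\infty$ as $\lambda\uparrow\sigma_{j+1}$, so exactly one root lies in each such interval, giving the asserted interlacing; no root lies below $\sigma_1$ because there $\mathcal{F}>1>0$. Next one checks that the operator $\mathcal{A}$ defined by \eqref{A}--\eqref{A_dom} is self-adjoint (it is a bounded perturbation of $-\Delta_{\mathbb{R}^n}\oplus 0\oplus\dots\oplus 0$ by the bounded symmetric block-operator encoding the couplings $\sigma_j\rho_j(u-u_j)$, $\sigma_j(u_j-u)$) and computes its spectrum: applying the Fourier transform in $x$ turns $\mathcal{A}$ into the direct integral over $\xi\in\mathbb{R}^n$ of the $(m+1)\times(m+1)$ matrices obtained by replacing $-\Delta_{\mathbb{R}^n}$ by $|\xi|^2$; for each fixed $p=|\xi|^2\ge 0$ the eigenvalues of this matrix are precisely $p$ shifted by the couplings, and a direct computation (the characteristic equation factors through $\mathcal{F}$) shows the eigenvalues sweep out exactly $[0,\infty)\setminus\bigcup_j(\sigma_j,\mu_j)$ as $p$ ranges over $[0,\infty)$, together with the flat bands at $\sigma_1,\dots,\sigma_m$, which however are reabsorbed into the continuum. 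This proves $\sigma(\mathcal{A})=[0,\infty)\setminus\bigcup_{j=1}^m(\sigma_j,\mu_j)$.

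The main step, and the principal obstacle, is the spectral convergence of the fibres. Fix $L\notin\bigcup_j\{\mu_j\}$ and $\theta\in\hat\Gamma\e\cong\mathbb{T}^n$. I would show that the $k$-th eigenvalue $\lambda_k^\theta(\M\e)$ below $L$ converges, uniformly in $\theta$, to the $k$-th eigenvalue of the fibre $\mathcal{A}(\theta)$ of $\mathcal{A}$, i.e.\ of the operator on $H(\mathbb{T}^n)$ obtained from $\mathcal{A}$ by imposing $\theta$-quasiperiodicity on the $u$-component. This is a quantitative homogenization estimate: the period cell $\M_i\e$ consists of the perforated Euclidean cube $F_i\e$ carrying the ``slow'' field $u$, plus the $m$ bubbles $B_{ij}\e$ whose first Dirichlet eigenvalues $\lambda_1(G_{ij}\e)\to\sigma_j$ (Lemma \ref{lm0}) and whose normalized volumes tend to $\rho_j$ (by \eqref{rho}), so that each bubble contributes in the limit one extra scalar degree of freedom $u_j$ coupled to the trace of $u$ with strength $\sigma_j\rho_j$ — this is exactly the coupling in \eqref{A}. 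The hard analytic work is: (i) constructing an ``extension/gluing'' operator that assigns to an eigenfunction of $-\Delta^\theta_{\M\e}$ a pair $(u,u_1,\dots,u_m)$, controlling the error using that $d_j\e$ is chosen at the critical capacity scale (so the capacity of each hole stays bounded and produces exactly the finite coupling constants $\sigma_j$, via the standard logarithmic/power capacity computation that explains the scalings \eqref{d_size}); (ii) the reverse construction, producing from a pair $(u,\dots,u_m)$ an approximate eigenfunction on $\M\e$ that is $\approx u$ on $F_i\e$, $\approx u_j$ on the bulk of $B_{ij}\e$, with a corrector supported in the neck region $G_{ij}\e$; (iii) combining these via the standard ``almost eigenvalue/eigenvector'' lemma to get two-sided eigenvalue estimates. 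The uniformity in $\theta$ is what upgrades fibrewise convergence to Hausdorff convergence of $\sigma(-\Delta_{M\e})\cap[0,L]$ to $\sigma(\mathcal{A})\cap[0,L]$, because by \eqref{fund} each band $\mathcal{J}_k(M\e)$ is the range of $\theta\mapsto\lambda_k^\theta(\M\e)$, and these ranges converge (in the Hausdorff metric on compact sets) to the ranges of the corresponding limit band functions; the enclosure \eqref{enclosure} provides the a priori compactness needed to pass to subsequential limits.

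Finally I would bound the number of gaps in a bounded window. Using \eqref{enclosure} and a Weyl-type count on the period cell, the number of band functions $\theta\mapsto\lambda_k^\theta(\M\e)$ whose range meets $[0,L]$ is bounded, and moreover for $\eps$ small exactly $m$ of the ``gaps between consecutive bands'' in $[0,L]$ survive in the limit (the ones converging to $(\sigma_j,\mu_j)$), because the limit band functions coming from the bulk Laplacian are continuous and their ranges fill $[0,\infty)$ except for these $m$ interlacing gaps, as computed in Step 1. Hence for $\eps<\eps_0$ the spectrum has precisely $m$ gaps inside $[0,L]$ with edges $\sigma_j\e\to\sigma_j$, $\mu_j\e\to\mu_j$, and any further gaps lie in $(L,\infty)$; letting $L\to\infty$ along the convergence gives \eqref{Haus0}--\eqref{Haus2}. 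I expect step (i)--(iii) of the middle paragraph — in particular getting the coupling constant right as the critical-capacity limit, uniformly in $\theta$ — to be where essentially all the difficulty lies; steps 1 and 3 are comparatively soft once the Hausdorff convergence is in hand.
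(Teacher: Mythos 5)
Your three-step outline matches the paper's, and Steps 1 and 3 are essentially the paper's arguments in different dress. For Step 1, the Fourier-transform reduction to the $(m+1)\times(m+1)$ matrix is just the Fourier picture of the paper's resolvent elimination of the $u_j$: both land on the equation $\lambda\mathcal{F}(\lambda)=|\xi|^2$ and the same analysis of the rational function $\lambda\mathcal{F}(\lambda)$. One small inaccuracy: there are no ``flat bands at $\sigma_j$'' --- a direct check shows $\sigma_j$ is never an eigenvalue of the $p$-dependent matrix; the points $\sigma_j$ enter $\sigma(\mathcal{A})$ only as accumulation points as $p\to\infty$, i.e.\ because the spectrum is closed. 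For Step 3 the spirit is the same as the paper's Lemma \ref{lm4}, but ``Weyl-type count'' is the wrong picture: on a cell of diameter $\sim\eps$ a Weyl count would predict \emph{zero} eigenvalues below any fixed $L$, which misses the $m+1$ anomalously small ones created by the weak bubble--cube coupling. The correct argument is that the $\eps^{-1}$-rescaled Neumann problem on $\M_i\e$ converges to the decoupled operator on $\square\oplus\mathbf{B}_1\oplus\dots\oplus\mathbf{B}_m$, which has exactly $m+1$ zero eigenvalues; hence $\eps^2\lambda^N_{m+2}(\M_i\e)$ stays bounded away from zero, so $\lambda^N_{m+2}(\M_i\e)\to\infty$, and then \eqref{enclosure} kills all bands past the $(m+1)$-th inside $[0,L]$.

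The genuinely different route, and the place where the proposal is incomplete, is Step 2. You want a quantitative fibrewise statement: for each $\theta$ the low Floquet eigenvalues on the $\eps$-cell converge, uniformly in $\theta$, to those of a $\theta$-fibre of $\mathcal{A}$. The paper instead restricts to $\eps=1/N$ so that $M\e$ is \emph{also} periodic under integer translations with period cell the fixed unit cube $\M\e$, and runs the homogenization there: property \eqref{ah} is obtained by extension/averaging operators $\Pi\e,\Pi_j\e$ to the unit cube and passing to the limit in the weak formulation against tailored test functions, while property \eqref{bh} is obtained by a whole-space resolvent/contradiction argument on $\mathbb{R}^n$, not via eigenvalues of fibres at all. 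This sidesteps the moving-target problem your route faces: for fixed $\theta\neq 1$ the bulk quasi-momentum $\xi_\theta=\arg\theta/\eps$ diverges, so the object you are converging \emph{to} is itself $\eps$-dependent, and the uniformity ``in all $\theta$ with an eigenvalue below $L$'' is exactly the point you flag as hard and then do not prove. Your approach, if carried through, would be stronger --- it gives individual band-edge convergence (the content of Remark \ref{rem_strong}) without the $\eps=1/N$ restriction --- but as written it replaces the paper's concrete mechanism (fixed-cell homogenization plus whole-space resolvent estimate) with a promissory note precisely at the crux.
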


We prove this theorem in the next section. In the last section we
present the formulae for $d_j$, $b_j$ which will ensure the
fulfilment of the equalities (\ref{ab}).

\section{\label{sec3}Proof of Theorem \ref{th2}}

\textcolor{black}{Before we prove the result in full detail we will
sketch the main ideas of the proof.}

At first (Subsection \ref{ss21}) we prove the equality
\begin{gather}\label{struct}
\sigma(\mathcal{A})=[0,\infty)\setminus\left(\cupl_{j=1}^m(\sigma_j,\mu_j)\right)
\end{gather}

\textcolor{black}{In the main part of the proof (Subsections
\ref{ss22}-\ref{ss23})} we show that
\\
\textit{for an arbitrary $L>0$,
$L\notin\cupl_{j=1}^m\left\{\mu_j\right\}$ the set
$\sigma(-\Delta_{M\e})\cap[0,L]$ converges in the Hausdorff sense
to the set $\sigma(\mathcal{A})\cap[0,L]$ as $\eps\to 0$.}\medskip

Let us recall the definition of \textcolor{black}{Hausdorff
convergence}.

\begin{definition} The set $\mathcal{B}\e\subset\mathbb{R}$ converges in the
Hausdorff sense to the set $\mathcal{B}\subset\mathbb{R}$ as
$\eps\to 0$ if the following conditions (A) and (B) hold:
\begin{gather}\tag{A}\label{ah}
\text{if }\lambda\e\in\mathcal{B}\e\text{ and }\liml_{\eps\to
0}\lambda\e=\lambda\text{ then }\lambda\in
\mathcal{B}\\\tag{B}\label{bh} \text{for any }\lambda\in
\mathcal{B}\text{ there exists }\lambda\e\in\mathcal{B}\e\text{
such that }\liml_{\eps\to 0}\lambda\e=\lambda
\end{gather}
\end{definition}

\textcolor{black}{Property (\ref{ah})} is verified in Subsection
\ref{ss22}, \textcolor{black}{property (\ref{bh})} is verified in
Subsection \ref{ss23}.

\textcolor{black}{In the last part of the proof (Subsection
\ref{ss24})} we show that within an arbitrary finite interval
$[0,L]$ \textcolor{black}{the spectrum $\sigma(-\Delta_{M\e})$ has
at most $m$ gaps} \textcolor{black}{when $\eps$ is small enough}
(i.e. \textcolor{black}{when $\eps<\eps_0$}). This fact and the
Hausdorff convergence of $\sigma(-\Delta_{M\e})\cap[0,L]$ to
$\sigma(\mathcal{A})\cap[0,L]=[0,L]\setminus\left(\cupl_{j=1}^m(\sigma_j,\mu_j)\right)$
imply the properties (\ref{Haus0})-(\ref{Haus2}). Indeed one can
easily prove the following simple proposition.
\begin{proposition}\label{prop1}
Let
$\mathcal{B}\e=[0,L]\setminus\left(\cupl_{j=1}^{m\e}(\a_j\e,\b_j\e)\right)$,
$\mathcal{B}=[0,L]\setminus\left(\cupl_{j=1}^{m}(\a_j,\b_j)\right)$,
where $L<\infty$ and
\begin{gather*}
0\leq\a_1\e,\quad \a\e_{j}<\b\e_{j}\leq \a\e_{j+1},\
j=\overline{1,m\e-1},\quad \a\e_{m\e}\leq L\\ 0<\a_1,\quad
\a_{j}<\b_{j}<\a_{j+1},\
j=\overline{1,m-1},\quad \a_{m}<L\\
m\e\leq m
\end{gather*}
Suppose that the set $\mathcal{B}\e$ converges to the set
$\mathcal{B}$ in the Hausdorff sense as $\eps\to 0$.

Then $m\e=m$ when $\eps$ becomes small (i.e. when $\eps$ is less
than some $\eps_0$) and
$$\forall j=1,\dots,m:\quad \liml_{\eps\to 0}\a_j\e=\a_j,\quad \liml_{\eps\to 0}\b_j\e=\b_j$$
\end{proposition}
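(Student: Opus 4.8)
The plan is to prove Proposition \ref{prop1} by a direct elementary argument exploiting the special structure of the sets: both $\mathcal{B}\e$ and $\mathcal{B}$ are closed subsets of $[0,L]$ obtained by deleting finitely many disjoint open intervals, and $\mathcal{B}$ in particular has exactly $m$ ``holes'' whose closures are pairwise disjoint and sit strictly inside $(0,L)$. The key observation is that Hausdorff convergence of the complements forces the endpoints $\a_j\e,\b_j\e$ to cluster near the endpoints $\a_j,\b_j$, and a counting argument then pins down $m\e=m$ for small $\eps$.

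First I would record the obvious but crucial consequences of properties (\ref{ah}) and (\ref{bh}). From (\ref{bh}): every point of $\mathcal{B}$ is a limit of points of $\mathcal{B}\e$; in particular each endpoint $\a_j,\b_j$, being in $\mathcal{B}$, is approximated by some $\lambda\e\in\mathcal{B}\e$. From (\ref{ah}): any convergent sequence of points of $\mathcal{B}\e$ has its limit in $\mathcal{B}$; equivalently, for any closed interval $[c,d]\subset(0,L)$ lying in a gap of $\mathcal{B}$ (i.e.\ $[c,d]\subset(\a_j,\b_j)$ for some $j$, or more precisely $[c,d]\cap\mathcal{B}=\varnothing$), we have $[c,d]\cap\mathcal{B}\e=\varnothing$ for all sufficiently small $\eps$. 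Indeed if not, some subsequence would give points of $\mathcal{B}\e$ in the compact set $[c,d]$ converging (along a further subsequence) to a point of $[c,d]$, which by (\ref{ah}) would lie in $\mathcal{B}$ --- contradiction. This gives the ``no spurious spectrum deep inside a gap'' statement. Symmetrically, I would show that $\mathcal{B}\e$ eventually meets every open subinterval of $(0,L)\cap\mathcal{B}$ that has nonempty interior: fix such a subinterval $(p,q)$; by (\ref{bh}) any point of $\mathcal{B}$ in $(p,q)$ is a limit of points of $\mathcal{B}\e$, so $\mathcal{B}\e\cap(p,q)\neq\varnothing$ for small $\eps$.

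Next I would combine these. Choose points $t_0<t_1<\dots<t_m$ with $t_0\in(0,\a_1)\cap\mathcal{B}$, $t_j\in(\b_j,\a_{j+1})\cap\mathcal{B}$ for $j=\overline{1,m-1}$, and $t_m\in(\b_m,L)\cap\mathcal{B}$; each lies in the interior of a ``band'' of $\mathcal{B}$, and between consecutive chosen points $t_{j-1},t_j$ lies exactly one gap $(\a_j,\b_j)$. Pick a closed interval $[c_j,d_j]\subset(\a_j,\b_j)$ inside each gap. For $\eps$ small: (i) by the first consequence, $\mathcal{B}\e\cap[c_j,d_j]=\varnothing$ for every $j=\overline{1,m}$, so $\mathcal{B}\e$ has at least $m$ gaps, one separating $t_{j-1}$ from $t_j$; hence $m\e\ge m$. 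Combined with the hypothesis $m\e\le m$ this forces $m\e=m$. (ii) Since $\mathcal{B}\e$ has exactly $m$ gaps and the points $t_0,\dots,t_m$ lie in $\mathcal{B}\e$ for small $\eps$ (they are in $\mathcal{B}$, hence limits of $\mathcal{B}\e$-points, but more directly: they lie in open bands of $\mathcal{B}$, and by the eventual-density statement together with (\ref{ah}) one checks a whole neighbourhood of $t_j$ lies in $\mathcal{B}\e$ --- alternatively just note $[c_j,d_j]$ is the unique gap of $\mathcal{B}\e$ between $t_{j-1}$ and $t_j$), the numbering of the gaps $(\a_j\e,\b_j\e)$ matches up: $(\a_j\e,\b_j\e)$ is the $j$-th gap, contained in $(t_{j-1},t_j)$.

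Finally I would prove the endpoint convergence $\a_j\e\to\a_j$, $\b_j\e\to\b_j$. Fix $j$ and an arbitrary small $\eta>0$ with $(\a_j-\eta,\a_j+\eta)\subset(t_{j-1},t_j)$ and likewise for $\b_j$. Upper bound on $\limsup\a_j\e$ minus lower bound: the point $\a_j-\eta$ lies in $\mathcal{B}\cap(t_{j-1},\a_j)$, so it is in an open band of $\mathcal{B}$; by the eventual-density argument $\mathcal{B}\e$ meets $(\a_j-\eta,\a_j)$, and since the $j$-th gap of $\mathcal{B}\e$ is the unique one in $(t_{j-1},t_j)$, this forces $\a_j\e\ge \a_j-\eta$ (a point of $\mathcal{B}\e$ left of $\a_j$ but right of $t_{j-1}$ must lie to the left of the gap $(\a_j\e,\b_j\e)$). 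Conversely, the interval $[\a_j+\tfrac{\eta}{2},\,\b_j-\tfrac{\eta}{2}]$ is a closed subinterval of the gap $(\a_j,\b_j)$, hence disjoint from $\mathcal{B}\e$ for small $\eps$; since it lies in $(t_{j-1},t_j)$ it must be swallowed by the $j$-th gap of $\mathcal{B}\e$, giving $\a_j\e\le \a_j+\tfrac{\eta}{2}<\a_j+\eta$ and $\b_j\e\ge \b_j-\tfrac{\eta}{2}>\b_j-\eta$. A symmetric argument using the band point $\b_j+\eta$ gives $\b_j\e\le\b_j+\eta$. Letting $\eta\to0$ yields $\liml_{\eps\to0}\a_j\e=\a_j$ and $\liml_{\eps\to0}\b_j\e=\b_j$. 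The only genuinely delicate point --- the main obstacle --- is the bookkeeping that identifies the $j$-th gap of $\mathcal{B}\e$ with the one living in $(t_{j-1},t_j)$; this rests entirely on the separation of the bands of $\mathcal{B}$ by the fixed points $t_j$ together with the two ``eventual'' statements derived from (\ref{ah}) and (\ref{bh}), and everything else is routine.
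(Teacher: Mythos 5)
The paper states Proposition~\ref{prop1} without proof, remarking only that one can ``easily prove'' it; so there is no authorial argument to compare against, and the task is simply to verify your filled-in proof. It is correct. The two ``eventual'' consequences you extract from (\ref{ah}) and (\ref{bh}) --- that a compact subinterval of a gap of $\mathcal{B}$ is eventually missed by $\mathcal{B}\e$, and that $\mathcal{B}\e$ eventually meets any open interval around a point of $\mathcal{B}$ --- are exactly what is needed, and the sandwiching of the chosen markers $t_0<t_1<\dots<t_m$ against the compact gap-cores $[c_j,d_j]$ forces $m\e\ge m$, hence $m\e=m$, and correctly aligns the labelling of the $\eps$-gaps. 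The endpoint estimate then follows by shrinking $[c_j,d_j]$ towards $(\a_j,\b_j)$ and picking $\mathcal{B}\e$-points near $\a_j-\eta$ and $\b_j+\eta$, squeezing $\a_j\e$ into $(\a_j-\eta,\a_j+\eta)$ and likewise for $\b_j\e$.

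One small point worth flagging: your choice of $t_m\in(\b_m,L)\cap\mathcal{B}$ (and the subsequent band-point to the right of $\b_j$ for $j=m$) tacitly requires $\b_m<L$, which is not among the listed hypotheses of the proposition (only $\a_m<L$ is stated). This is harmless --- if $\b_m\ge L$ the claim $\b_m\e\to\b_m$ cannot even be extracted from the sets $\mathcal{B}\e,\mathcal{B}\subset[0,L]$, so the hypothesis $\b_m<L$ must be intended --- and indeed in the paper's only application one has $L>\max_j\mu_j$, i.e.\ $\b_m<L$. It would be cleaner to state this assumption explicitly at the outset of your argument rather than rely on $(\b_m,L)$ being nonempty.
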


\subsection{\label{ss21}Structure of $\sigma(\mathcal{A})$}

Let $\lambda\in
\mathbb{C}\setminus\cupl_{j=1}^m\left\{\sigma_j\right\}$. Let
$F=\left(\begin{matrix}f\\f_1\\\dots\\f_m\end{matrix}\right)\in
\mathrm{im}(\mathcal{A}-\lambda \mathrm{I})$, i.e. there is
$U=\left(\begin{matrix}u\\u_1\\\dots\\u_m\end{matrix}\right)\in
\mathrm{dom}(\mathcal{A})$ satisfying $\mathcal{A}U-\lambda U=F$.
Then $u_j=\ds{\sigma_j u+f_j\over\sigma_j-\lambda}$ and
\begin{gather}\label{pencil}
-\Delta_{\mathbb{R}^n} u-\lambda\mathcal{F}(\lambda)
u=f+\suml_{j=1}^m{\sigma_j\rho_j f_j\over\sigma_i-\lambda}
\end{gather}
where $\mathcal{F}(\lambda)$ is defined by (\ref{mu_eq}).
\textcolor{black}{Equality} (\ref{pencil}) implies that
\begin{gather}\label{arg}
\lambda\in
\sigma(\mathcal{A})\setminus\cupl_{j=1}^m\left\{\sigma_j\right\} \
\Longleftrightarrow\ \lambda\mathcal{F}(\lambda)\in
\sigma(-\Delta_{\mathbb{R}^n})=[0,\infty)
\end{gather}

\begin{figure}[b]
\begin{center}
\scalebox{0.45}[0.45]{\includegraphics{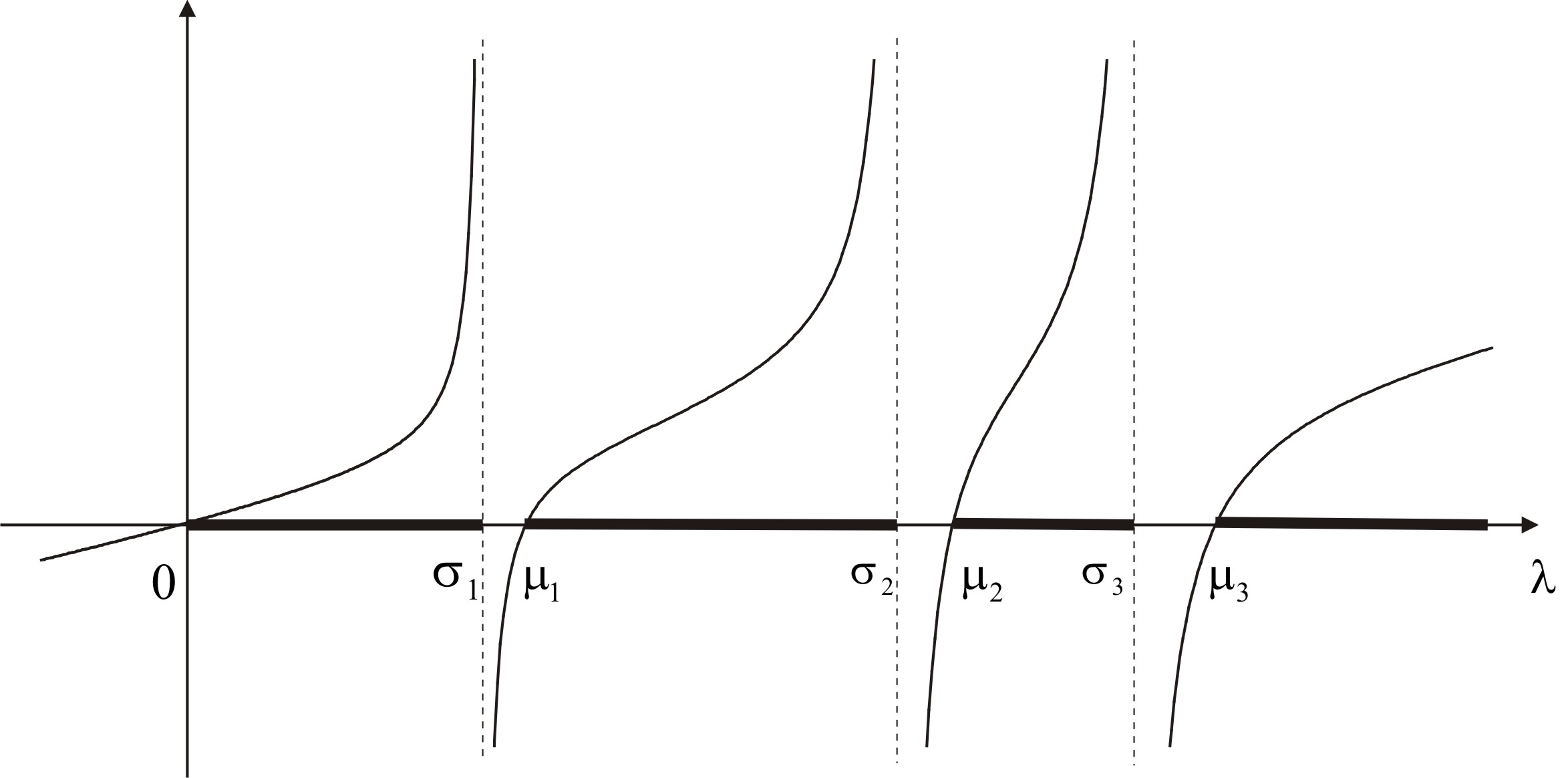}}
\caption{\label{fig2}The \textcolor{black}{graph} of the function
$\lambda\mathcal{F}(\lambda)$ (for $m=3$). The bold intervals are
the components of $\sigma(\mathcal{A})$}
\end{center}
\end{figure}

At first we study the function $\lambda\mathcal{F}(\lambda)$ on
the real line. It is easy to see that
$\lambda\mathcal{F}(\lambda)$ is a strictly increasing function on
the intervals $(-\infty,\sigma_1)$, $(\sigma_m,\infty)$,
$(\sigma_j,\sigma_{j+1})$, $j=1,\dots,m-1$,
$\liml_{\lambda\to\pm\infty}\lambda\mathcal{F}(\lambda)=\pm\infty$,
$\liml_{\lambda\to\sigma_j\pm
0}\lambda\mathcal{F}(\lambda)=\mp\infty$, furthermore there are
the points $\mu_j$, $j=1,\dots,m$, such that
\begin{gather*}
\mathcal{F}(\mu_j)=0,\ j=1,\dots,m-1\\
\sigma_j<\mu_j<\sigma_{j+1},\ j=1,\dots,m\quad
\sigma_m<\mu_m<\infty\\
\left\{\lambda\in\mathbb{R}\setminus\cupl_{j=1}^m\left\{\sigma_j\right\}:\
\lambda\mathcal{F}(\lambda)\geq 0\right\}=[0,\sigma_1)\cup
\left(\cupl_{j=1}^{m-1}[\mu_j,\sigma_{j+1})\right)\cup[\mu_m,\infty)
\end{gather*}

Let us consider the equation $\lambda\mathcal{F}(\lambda)=a$,
where $a\in [0,\infty)$. One the one hand it is equivalent to the
equation $\left(\prod\limits_{j=1}^m(\sigma_j-\lambda)\right)^{-1}
P_{m+1}(\lambda)=0$, where $P_{m+1}$ is a polynomial
\textcolor{black}{of the degree} $m+1$, and\textcolor{black}{,
therefore,} in $\mathbb{C}$ this equation has \textcolor{black}{at
most $m+1$ roots}. On the other hand it is easy to see that on
$[0,\infty)$ the equation $\lambda\mathcal{F}(\lambda)=a$ has
$m+1$ roots  (if $a=0$ then these roots are
$0,\mu_1,\dots,\mu_m$). Hence we obtain that \textcolor{black}{the
set $\{\lambda\in \mathbb{C}:\lambda\mathcal{F}(\lambda)\geq 0\}$
belongs} to $[0,\infty)$.

The \textcolor{black}{graph} of the function
$\lambda\mathcal{F}(\lambda)$, $\lambda\in \mathbb{R}$ is
presented on the Figure \ref{fig2}.

\textcolor{black}{Thus,} we conclude that $\lambda\in
\sigma(\mathcal{A})\setminus\cupl_{j=1}^m\left\{\sigma_j\right\}$
iff $\lambda\in
[0,\sigma_1)\cup\left(\cupl_{j=1}^{m-1}[\mu_j,\sigma_{j+1})\right)\cup[\mu_m,\infty)$.
Since the spectrum $\sigma(\mathcal{A})$ is a closed set
\textcolor{black}{, then the points $\sigma_j$ ($j=1,\dots,m$)} also
belong to $\sigma(\mathcal{A})$. \textcolor{black}{Equality}
(\ref{struct}) is proved.

\subsection{\label{ss22}Property (\ref{ah}) of Hausdorff convergence}

We present \textcolor{black}{the proof} for the case $n\geq 3$ only.
For the case $n=2$ the proof is repeated word-by-word with small
modifications \textcolor{black}{in some  estimates}.

Let $\lambda\e\in\sigma(-\Delta_{M\e})\cap[0,L]$ and
$\liml_{\eps\to 0}\lambda\e=\lambda$. Obviously $\lambda\in[0,L]$,
\textcolor{black}{thus,} we have to prove that $\lambda$ belongs to
$\sigma(\mathcal{A})$. If
$\lambda\in\cupl_{j=1}^m\left\{\sigma_j\right\}$ then
\textcolor{black}{this statement follows from (\ref{struct})}.
\textcolor{black}{Therefore,} \textcolor{black}{we can focus on the
case} $\lambda\notin\cupl_{j=1}^m\left\{\sigma_j\right\}$.

Let us consider \textcolor{black}{the sequence} $\eps_N\subset\eps$,
where $\eps_N={1\over N}$, $N=1,2,3$\dots\ For convenience we
preserve the same notation $\eps$ having in mind the sequence
$\eps_N$.

We introduce the following cubes in $\mathbb{R}^n$:
\begin{gather*}
\square=\left\{x\in\mathbb{R}^n:\
0\leq x_\a\leq {1},\ \forall\a\right\}\\
\square_i\e=\left\{x\in\mathbb{R}^n:\ \eps i_\a\leq x_\a\leq
\eps(i_\a+1),\ \forall\a\right\},\
i=(i_1,\dots,i_n)\in\mathbb{Z}^n
\end{gather*}
Since $\eps^{-1}\in\mathbb{N}$\textcolor{black}{, then}
$\square=\cupl_{i\in\I}\square_i\e$, where
$$\I=\left\{i\in \mathbb{Z}^n:\ 0\leq i_\a\leq (\eps^{-1}-1),
\forall\alpha \right\}$$ Also we introduce the following set in
$M\e$:
\begin{gather*}
\M\e=\cupl_{i\in \I}\M_i\e
\end{gather*}
where $\M_i\e$ is defined by \textcolor{black}{formulae}
(\ref{cell}).

In Section \ref{sec2} \textcolor{black}{we concluded} that $M\e$ is
$\Gamma\e$-periodic manifold, the set $\M_i\e$ is a corresponding
periodic cell. On the other hand since
$\eps^{-1}\in\mathbb{N}$\textcolor{black}{, then $M\e$ is also
$\Gamma$-periodic manifold on which the group
$\Gamma\cong\mathbb{Z}^n$ acts} by the following rule (below by
$\gamma_k$, $k\in\mathbb{Z}^n$ we denote the elements of
$\Gamma$):
\begin{itemize}
\item[-] if $\tilde x=x\in\Omega\e$ then $\gamma_k$ maps $\tilde
x$ into the point $\gamma_k\tilde x=x+k\in\Omega\e$,

\item[-] if $\tilde x=(\theta_1,\dots,\theta_n)\in B_{ij}\e$ then
$\gamma_k$ maps $\tilde x$ into the point $\gamma_k\tilde x\in
B_{i+k\eps^{-1},j}$ with the same angle coordinates
$(\theta_1,\dots,\theta_n)$.
\end{itemize}
The set $\M\e$ is a period cell. The boundary of $\M\e$ is
independent of $\eps$: $\partial\M\e=\left\{\tilde x\in\Omega\e:\
x\in\partial\square\right\}$.

Roughly speaking if $\eps^{-1}\in\mathbb{N}$ then $M\e$ is not
only "$\eps$-periodic" manifold but also "$1$-periodic" manifold.
\textcolor{black}{To prove} \textcolor{black}{property (\ref{ah})} of
the Hausdorff convergence it is more convenient to look at $M\e$
as $\Gamma$-periodic manifold (and to work with period cell
$\M\e$) since in this case we are able to utilize some ideas and
methods developed in \citep{Khrus1,Khrus2,Khrab3,Khrab5,Khrab6}.

By $\mathbf{M}_\a$ ($\a=1,\dots,2n$) we denote the components of
$\partial\M\e$:
\begin{gather*}\begin{matrix}
\mathbf{M}_\a=\left\{\tilde x\in\Omega\e:\ x_\a=0\text{ and }
0\leq x_\b\leq 1,
\forall\b\not=\alpha\right\}&\text{ if }\alpha=1,\dots, n\\
\mathbf{M}_\a= \left\{\tilde x\in\Omega\e:\ x_{\a-n}=1\text{ and }
0\leq x_\b\leq 1, \forall\b\not=\alpha-n\right\}&\text{ if
}\alpha=n+1,\dots, 2n
\end{matrix}
\end{gather*}
The faces $\mathbf{M}_\a$ and $\mathbf{M}_{\a+n}$ ($\a=1,\dots,n$)
are parallel to each other and
\begin{gather}\label{faces}
\gamma_{e_\a}\M_\a=\M_{\a+n},\ \a=1,\dots,n,\text{ where }
e_\a=\underset{^{\overset{\qquad\quad\uparrow}{\qquad\quad
\a\text{-th place}}\qquad }}{(0,0,\dots,1,\dots,0)}
\end{gather}
Also \textcolor{black}{we denote} by $\mathbf{M}_\a$ the
corresponding faces of $\partial\square$.

Since $\lambda\e\in\sigma(-\Delta_{M\e})$\textcolor{black}{, then
}there exists $\theta\e\in\hat\Gamma$ such that
$\lambda\e\in\sigma(-\Delta_{\M\e}^{\theta\e})$. Since $\Gamma$ is
isomorphic to $\mathbb{Z}^n$\textcolor{black}{, then} the dual group
$\hat\Gamma$ is isomorphic to
$\mathbb{T}^n=\left\{\theta=(\theta_1,\dots,\theta_r)\in
\mathbb{C}^r:\ \forall \a\ |\theta_\a|=1\right\}$. For convenience
hereafter by $\theta\e$ we will understand a corresponding element
$(\theta\e_1,\dots,\theta\e_n)\in\mathbb{T}^n$.

We extract a subsequence (still denoted by $\eps$) such that
\begin{gather*}
\theta\e\underset{\eps\to
0}\to\theta=(\theta_1,\dots,\theta_n)\in\mathbb{T}^n
\end{gather*}
Let $u\e\in\mathrm{dom}(\Delta_{\M\e}^{\theta\e})$ be the
eigenfunction corresponding to $\lambda\e$, i.e.
$-\Delta_{\M\e}^{\theta\e}u\e=\lambda\e u\e$, $u\e\not=0$. We
normalize $u\e$ by the condition $\|u\e\|_{L_2(\M\e)}=1$, then
$\left\|\nabla u\e\right\|^2_{L_2(\M\e)}=\lambda\e$.

In order to describe the behaviour of $u\e$ as $\eps\to 0$ we need
some special operators. From now on by $C$ we denote a generic
constant independent of $\eps$.

We denote
$$\Omega_\square\e=\left\{\tilde x\in\Omega\e:\ x\in\square\setminus\left(\cupl_{i\in\I}\cupl_{j=1}^m D\e_{ij}\right)\right\}$$
and introduce an extension operator $\Pi\e: H^1(\M\e)\to
H^1(\square)$ such that for each $u \in H^1(\M\e)$:
\begin{gather}\label{pi_a}
\Pi\e u(x)=u(\tilde x)\ \text{ for }\ \tilde x\in
\Omega_\square\e\\\label{pi_b} \|\Pi\e u\|_{H^1(\square)}\leq
C\|u\|_{H^1(\Omega\e_\square)}
\end{gather}
It is known (see e.g. \citep[Chapter 4]{March}) that such an
operator exists.

By $\langle u\rangle_B$ we denote the average value of the
function $u$ over the domain $B\subset M\e$ ($|B|\not= 0$), i.e.
$\langle u \rangle_{B}=\ds{1\over |B|}\intl_{B}u dV\e$, where
$dV\e$ is the density of the Riemannian measure on $M\e$. The same
notation remains for $B\subset\mathbb{R}^n$.

If $\Sigma\subset M\e$ is a $(n-1)$-dimensional submanifold then
$g\e$ induces on $\Sigma$ the Riemannian \textcolor{black}{metric} and measure. We
denote by $dS\e$ the density of this measure. Again by $\langle
u\rangle_\Sigma$ we denote the average value of the function $u$
over $\Sigma$, i.e. $\langle u\rangle_B=\ds{1\over
|\Sigma|}\intl_{\Sigma}u dS\e$ (here $|\Sigma|=\intl_\Sigma
dS\e$).

We introduce the operators $\Pi_j\e: L_2(\M\e)\to L_2(\square)$
($j=1,\dots,m$) by the formula:
\begin{gather*}
i\in\I,\ x\in\square_i\e:\ \Pi_j\e u(x)=\langle u\e
\rangle_{B_{ij}\e}
\end{gather*}
\textcolor{black}{Recall} that
$\square=\cupl_{i\in\I}\square_i\e$\textcolor{black}{.} Using
\textcolor{black}{the Cauchy inequality} and (\ref{b_size}) we
obtain
\begin{gather}\label{pij}
\|\Pi_j\e u\|_{L_2(\square)}\leq
C\|u\|_{L_2\left(\cup_{i\in\I}\cup_{j=1}^m B_{ij}\e\right)}
\end{gather}

\textcolor{black}{In view of} (\ref{pi_b}), (\ref{pij}) the norms
$\|\Pi\e u\e\|_{H^1(\square)}$, $\|\Pi_j\e u\e\|_{L_2(\square)}$
($j=1,\dots,m$) are bounded uniformly in $\eps$. Using the
embedding theorem (see e.g. \citep[Chapter 4]{Taylor}) we obtain
that the sub-sequence (still denoted by $\eps$), the functions
$u\in H^1(\square)$, $u_j\in L_2(\square)$, $j=1,\dots,m$ exist
such that
\begin{gather*}
\Pi\e u\e\underset{\eps\to 0}\to u\text{ weakly in
}H^1(\square)\text{ and strongly in }L_2(\square) ,\quad\Pi_j\e
u\e\underset{\eps\to 0}\to u_j\text{ weakly in }L_2(\square)
\end{gather*}
Moreover due to the trace theorem (see e.g. \citep[Chapter
4]{Taylor}) $\Pi\e u\e$, $u\in L_2(\partial\square)$ and
\begin{gather}\label{trace}
\Pi\e u\e\underset{\eps\to 0}\to u\text{ strongly in
}L_2(\partial\square)
\end{gather}
Since
$u\e\in\mathrm{dom}(\Delta_{\M\e}^{\theta\e})$\textcolor{black}{,
then} in view of (\ref{faces})
\begin{gather*}
u\e(x+e_\a)=\overline{\theta\e_\a} u\e(x),\ \tilde x\in
\mathbf{M}_\a,\ \a=1,\dots,n
\end{gather*}
\textcolor{black}{Therefore,}
\begin{gather*}
u(x+e_\a)=\overline{\theta_\a} u(x),\ x\in \mathbf{M}_\a,\
\a=1,\dots,n
\end{gather*}
\textcolor{black}{Thus,} $u\in
\mathrm{dom}(\bar{\eta}^\theta_{\square})$. Recall (see
\textcolor{black}{Section \ref{sec1}}) that
$\bar{\eta}^\theta_{\square}$ is the sesquilinear form which
generates the operator $-\Delta_{\M\e}^{\theta\e}$.

We also need some auxiliary lemmas.
\begin{lemma}\label{lm-1}
For any $j=1,\dots,m$:
\begin{gather}\label{conv1}
\liml_{\eps\to 0}\eps^n\suml_{i\in\I}\left|\langle
u\e\rangle_{S_{ij}\e}\right|^2=\textcolor{black}{\|u\|^2_{L_2(\square)}}
\end{gather}
\end{lemma}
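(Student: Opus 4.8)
The plan is to show that the discrete quadratic sum over the averages $\langle u\e\rangle_{S_{ij}\e}$ behaves, in the limit $\eps\to0$, like the $L_2$-norm of the limit function $u$ over the unit cube $\square$. First I would observe that each sphere $S_{ij}\e$ sits at distance $d_j\e+\kappa\eps/2$ from the centre $x_{ij}\e$, which is comparable to $\eps$ up to a fixed constant, so on the Euclidean part $\Omega_\square\e$ the average $\langle u\e\rangle_{S_{ij}\e}$ coincides with $\langle \Pi\e u\e\rangle_{S_{ij}\e}$, where $\Pi\e u\e$ is the extension from (\ref{pi_a})--(\ref{pi_b}). The key reduction is therefore to replace $u\e$ by its extension $w\e:=\Pi\e u\e\in H^1(\square)$, which converges weakly in $H^1(\square)$ and strongly in $L_2(\square)$ to $u$, and then to compare the ``sphere average'' of $w\e$ with a genuine cube average $\langle w\e\rangle_{\square_i\e}$.

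Next I would carry out this comparison cube by cube. For each $i\in\I$, using a rescaling $x\mapsto(x-\eps i)/\eps$ that maps $\square_i\e$ onto the fixed unit cube $\square$ and $S_{ij}\e$ onto a fixed sphere $S_j$ (of radius $\kappa/2$ plus $d_j\e/\eps=o(1)$, so essentially fixed), a standard trace/Poincar\'e inequality on $\square$ gives
\begin{gather*}
\left|\langle w\e\rangle_{S_{ij}\e}-\langle w\e\rangle_{\square_i\e}\right|^2\leq C\eps^{2-n}\|\nabla w\e\|^2_{L_2(\square_i\e)}.
\end{gather*}
Multiplying by $\eps^n$ and summing over $i\in\I$ bounds the error by $C\eps^2\|\nabla w\e\|^2_{L_2(\square)}\leq C\eps^2$, which vanishes as $\eps\to0$ by (\ref{pi_b}) and the normalization. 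Hence it suffices to prove $\eps^n\sum_{i\in\I}|\langle w\e\rangle_{\square_i\e}|^2\to\|u\|^2_{L_2(\square)}$. But $\eps^n\sum_{i\in\I}|\langle w\e\rangle_{\square_i\e}|^2=\|P\e w\e\|^2_{L_2(\square)}$, where $P\e$ is the orthogonal projection in $L_2(\square)$ onto functions piecewise constant on the cubes $\square_i\e$; since $P\e w\e=P\e(w\e-u)+P\e u$ with $\|P\e(w\e-u)\|_{L_2(\square)}\leq\|w\e-u\|_{L_2(\square)}\to0$ (strong $L_2$ convergence) and $\|P\e u\|_{L_2(\square)}\to\|u\|_{L_2(\square)}$ (since $P\e u\to u$ in $L_2(\square)$ for any fixed $u\in L_2(\square)$, the mesh size $\eps$ going to zero), the claim follows.

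The main obstacle I anticipate is the first reduction step, i.e. justifying that $\langle u\e\rangle_{S_{ij}\e}=\langle\Pi\e u\e\rangle_{S_{ij}\e}$ with a uniform trace estimate, and controlling the scaling factors $\eps^{2-n}$ that appear on the individual cubes: one must be careful that the sphere $S_{ij}\e$, though shrinking in absolute size, has radius comparable to $\eps$ (not to $d_j\e$), so after the natural dilation by $\eps^{-1}$ it becomes a sphere of \emph{fixed} radius $\kappa/2$ (up to an $o(1)$ perturbation coming from $d_j\e/\eps\to0$), on which the trace inequality has an $\eps$-independent constant. Once this scaling bookkeeping is done correctly, everything else is the soft functional-analytic argument above. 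For $n=2$ the same scheme works; only the power of $\eps$ in the intermediate trace estimate and in the definition of $d_j\e$ change, as noted in the text.
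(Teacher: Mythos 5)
Your proposal is correct and follows essentially the same route as the paper: you reduce to the extension $\Pi^\eps u^\eps$, compare the sphere average $\langle\Pi^\eps u^\eps\rangle_{S_{ij}^\eps}$ with the cube average $\langle\Pi^\eps u^\eps\rangle_{\square_i^\eps}$ via a rescaled trace/Poincar\'e estimate giving an $\eps^{2-n}$ factor per cube (hence $O(\eps^2)$ after summation), and then pass to the limit using the strong $L_2(\square)$ convergence of $\Pi^\eps u^\eps$ to $u$. The paper organizes the sphere-to-cube step through an intermediate annulus $\hat R_{ij}^\eps$ (its inequalities (3.6) and (3.7)), while you compare sphere average and cube average in one step; this is a harmless repackaging of the same per-cube estimate, and your handling of the limit via the piecewise-constant $L_2$-projection is equivalent to the paper's direct use of the Poincar\'e inequality (3.5).
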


\begin{proof} We denote
$\hat R_{ij}\e=\left\{\tilde x\in\Omega\e:\
d_{j}\e+{\kappa\eps\over
4}\leq|x-x_{ij}\e|<d_{j}\e+{\kappa\eps\over 2}\right\}$. One has
the inequalities:
\begin{gather}\label{ineq1}0\leq\|\Pi\e u\e\|^2_{L_2(\square_i\e)}-\eps^n\left|\langle \Pi\e
u\e\rangle_{\square_{i}\e}\right|^2\leq C\eps^2\|\nabla \Pi\e
u\e\|^2_{L_2(\square_i\e)},\quad i\in\I\\\label{ineq2}
\left|\langle \Pi\e u\e\rangle_{\square_{i}\e}-\langle
u\e\rangle_{\hat R_{ij}\e}\right|^2\leq C\|\nabla \Pi\e
u\e\|^2_{L_2(\square_{i}\e)}\eps^{2-n},\quad i\in\I\\\label{ineq3}
\left|\langle u\e\rangle_{S_{ij}\e}-\langle u\e\rangle_{\hat
R_{ij}\e}\right|^2\leq C\|\nabla u\e\|^2_{L_2(\hat
R_{ij}\e)}\eps^{2-n},\quad i\in\I
\end{gather}
which are valid for any $u\e\in H^1(\Omega_\square\e)$, $
j=1,\dots,m$. \textcolor{black}{Inequality} (\ref{ineq1}) is the
Poincar\'{e} inequality, the inequality (\ref{ineq2}) follows directly
from \citep[Lemma 2.1]{Khrab5}, and the inequality (\ref{ineq3})
can be proved in the same way as \textcolor{black}{inequality} (2.2)
from \citep{Khrab6}.\footnote{In \citep{Khrab6}
\textcolor{black}{inequality} (\ref{ineq3}) with $\partial\hat
R_{ij}\e\setminus S_{ij}\e$ instead of $S_{ij}\e$ was proved. For
$S_{ij}\e$ the proof is
 similar. We remark that in the case $n=2$
 \textcolor{black}{inequality}
 (\ref{ineq3}) is valid with $|\ln\eps|$ instead of $\eps^{2-n}$.}

\textcolor{black}{Equality} (\ref{conv1}) follows directly from
(\ref{ineq1})-(\ref{ineq3}). \textcolor{black}{The lemma is proved.}

\end{proof}

\begin{lemma}\footnote{This result was given in
\citep{Khrab3} without a justification. In the current work we
present a complete proof.}\ \label{lm0}For $j=1,\dots,m$:
\begin{gather*}\liml_{\eps\to 0}\lambda_{1}^D(G_{ij}\e)=\sigma_j
\end{gather*}
where $\sigma_j$ is defined by \textcolor{black}{formula}
(\ref{sigma}).
\end{lemma}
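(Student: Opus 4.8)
The plan is to compute the asymptotics of the first Dirichlet eigenvalue $\lambda_1^D(G_{ij}\e)$ of the Laplace--Beltrami operator on $G_{ij}\e=R_{ij}\e\cup B_{ij}\e$ by a variational argument: a trial function gives the upper bound $\limsup_{\eps\to0}\lambda_1^D(G_{ij}\e)\le\sigma_j$, and a compactness/energy argument applied to the actual eigenfunctions gives the matching lower bound. The heuristic behind the value $\sigma_j$ is that, as $\eps\to0$, the ``annular collar'' $R_{ij}\e$ (of fixed relative width $\kappa\eps/2$, but with inner radius $d_j\e\ll\eps$) becomes a thin capillary connecting the bulk to the bubble $B_{ij}\e$, which itself is essentially a full sphere of radius $b_j\e=b_j\eps$. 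The eigenfunction for the bottom of the spectrum is almost constant on the bubble and almost constant (equal to the boundary value, which is forced to $0$ on $S_{ij}\e$) outside, and the eigenvalue is governed by the ratio of the ``conductance'' of the neck to the volume of the bubble. Concretely: the capacity of the neck $R_{ij}\e$ between the spheres of radius $d_j\e$ and $d_j\e+\kappa\eps/2$ is, to leading order, $(n-2)\omega_{n-1}(d_j\e)^{n-2}$ for $n>2$ (and $\omega_1/|\ln(d_j\e)|$-type for $n=2$), while the volume of $B_{ij}\e$ tends to $(b_j\e)^n\omega_n=\rho_j\eps^n$; plugging $d_j\e=d_j\eps^{n/(n-2)}$ (resp.\ $d_j\e=\exp(-1/(d_j\eps^2))$) makes the ratio converge precisely to $\sigma_j$ in \eqref{sigma}.

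For the \textbf{upper bound} I would build an explicit trial function $v\e\in C_0^\infty(G_{ij}\e)$ (i.e.\ vanishing near $S_{ij}\e$): take $v\e\equiv1$ on the bubble $B_{ij}\e$ and on the inner part of the neck $\{|x-x_{ij}\e|\le d_j\e+\tfrac{\kappa\eps}{4}\}$, and let $v\e$ drop to $0$ inside the outer half of the collar $\hat R_{ij}\e$ using the harmonic (capacitary) profile $v\e=\phi(|x-x_{ij}\e|)$, where $\phi$ solves the radial Laplace equation between the two spheres. Then $\|\nabla v\e\|_{L_2(G_{ij}\e)}^2$ equals the said capacity and $\|v\e\|_{L_2(G_{ij}\e)}^2\sim|B_{ij}\e|\sim\rho_j\eps^n$, so the Rayleigh quotient tends to $\sigma_j$. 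For the \textbf{lower bound}, let $w\e$ be the first Dirichlet eigenfunction, normalized by $\|w\e\|_{L_2(G_{ij}\e)}=1$, so $\|\nabla w\e\|^2=\lambda_1^D(G_{ij}\e)$. One first argues that $\lambda_1^D(G_{ij}\e)$ stays bounded (using the trial function just built), which forces $w\e$ to be nearly constant on the bubble and nearly constant on the bulk side of the neck (Poincaré-type inequalities as in \eqref{ineq1}--\eqref{ineq3} and the references \citep{Khrab5,Khrab6}); since $w\e=0$ on $S_{ij}\e$, the bulk-side constant is forced to be small and the whole mass concentrates on the bubble, $\|w\e\|_{L_2(B_{ij}\e)}^2\to1$. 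Then the Dirichlet energy is bounded below by the Dirichlet energy of the radial profile interpolating between the (nearly constant) value on the bubble and $0$ on $S_{ij}\e$, which is at least $(\text{capacity}+o(1))\cdot|\langle w\e\rangle_{B_{ij}\e}|^2$; dividing by $\|w\e\|^2=1$ and using $|\langle w\e\rangle_{B_{ij}\e}|^2|B_{ij}\e|\to1$ gives $\liminf\lambda_1^D(G_{ij}\e)\ge\sigma_j$.

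The step I expect to be the main obstacle is the \emph{lower bound}, specifically showing that the eigenfunction's energy is not wasted in transverse oscillations along the neck $R_{ij}\e$ and that the effective one-dimensional (radial) reduction is legitimate up to $o(1)$ errors. This requires a careful Poincaré inequality on the thin annular shells of $R_{ij}\e$ — whose thickness $\kappa\eps/2$ is comparable to $\eps$ but whose inner radius $d_j\e$ is exponentially (resp.\ polynomially) smaller — so one must control the ratio of the angular Poincaré constant to the radial one, and confirm it is negligible compared to the relevant scales, exactly the kind of estimate established in \citep[Lemma 2.1]{Khrab5} and \citep{Khrab6}. One also has to handle the two metrics carefully at the junction $\partial B_{ij}\e$: the bubble carries the spherical metric of radius $b_j\e$ and the neck the flat metric, and the gluing is only $C^0$; but since the eigenfunction is $H^1$ and the energy integrals are taken piecewise, the matching across $\partial B_{ij}\e$ only enters through the trace, which is continuous, so this causes no real difficulty. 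Finally the case $n=2$ is handled identically except that the logarithmic capacity $\omega_1/\log\!\big((d_j\e+\kappa\eps/2)/d_j\e\big)\sim 2\pi/|\ln d_j\e|$ replaces the power-law capacity, and substituting $d_j\e=\exp(-1/(d_j\eps^2))$ yields $\sigma_j=d_j/(4b_j^2)$ after accounting for $|B_{ij}\e|\sim\pi (b_j\eps)^2$ and $\omega_2=4\pi$.
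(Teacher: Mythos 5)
Your overall strategy (Rayleigh-quotient upper bound by an explicit harmonic trial function, plus a Poincar\'e/concentration argument for the lower bound) is indeed the strategy of the paper, and you correctly identify the leading-order dichotomy: Dirichlet energy concentrates at a thin neck, the $L_2$ mass concentrates on the bubble. However, your identification of the \emph{effective capacity} is off by a factor of $2$, and this error is fatal for the lemma, since the very point of the result is the exact value $\sigma_j$.

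The bottleneck between the bubble and the outer sphere $S_{ij}\e$ consists of \emph{two} resistances in series of asymptotically equal size, not one. Besides the flat annular neck $R_{ij}\e$ (capacity $\approx(n-2)\omega_{n-1}(d_j\e)^{n-2}$ for $n>2$), there is a spherical collar inside $B_{ij}\e$ near the hole $\theta_n\approx\Theta_j\e$, whose capacity one computes from the radial profile on the sphere: with $F(\theta_n)=\int_{\pi/2}^{\theta_n}\sin^{1-n}\psi\,d\psi$ one gets $\mathrm{cap}\approx -\,(b_j\e)^{n-2}\omega_{n-1}/F(\Theta_j\e)\approx(n-2)\omega_{n-1}(d_j\e)^{n-2}$ as well, since $-F(\Theta_j\e)\sim\tfrac1{n-2}(d_j\e/b_j\e)^{2-n}$. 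The series combination therefore halves the effective capacity, which is exactly the origin of the factor $\tfrac{n-2}{2}$ (rather than $n-2$) in \eqref{sigma}; your heuristic ratio thus yields $2\sigma_j$, not $\sigma_j$. (The same factor is missed in your $n=2$ check, and in addition $|B_{ij}\e|\sim\omega_2(b_j\eps)^2=4\pi(b_j\eps)^2$, not $\pi(b_j\eps)^2$.)

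This breaks both halves of your argument. Taken literally, your trial function is $1$ on the bubble and on $\{|x-x_{ij}\e|\le d_j\e+\kappa\eps/4\}$ and drops harmonically to $0$ on the outer shell $\{d_j\e+\kappa\eps/4\le r\le d_j\e+\kappa\eps/2\}$; since both radii of that shell are comparable to $\eps$, its capacity is $\sim\eps^{n-2}$, not $\sim(d_j\e)^{n-2}\sim\eps^n$, so the Rayleigh quotient blows up like $\eps^{-2}$. Even the natural fix (harmonic profile across the \emph{whole} flat neck $R_{ij}\e$, $\equiv 1$ on the bubble) only gives $\limsup\lambda_1^D\le 2\sigma_j$, which is not sharp. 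Symmetrically, in your lower bound you cannot bound the neck energy from below by $\mathrm{cap}(R_{ij}\e)\,|\langle w\e\rangle_{B_{ij}\e}|^2$: the eigenfunction's value at $\partial B_{ij}\e$ is only $\approx\tfrac12\langle w\e\rangle_{B_{ij}\e}$ because half the capacitory drop occurs \emph{inside} the bubble.

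The paper's construction repairs this: $\hat v_{ij}\e$ is harmonic \emph{jointly} on $\hat G_{ij}\e=R_{ij}\e\cup\hat B_{ij}\e$ (neck plus lower hemisphere), with boundary data $0$ on $S_{ij}\e$, $1$ on the equator $\hat S_{ij}\e$, and matching Neumann data on $\partial B_{ij}\e$. The denominator of $\mathrm{A}_j\e$ in \eqref{v_exact_c}, $1-(2d_j\e/\kappa\eps)^{n-2}-(n-2)F(\Theta_j\e)(d_j\e/b_j\e)^{n-2}$, tends to $2$, precisely recording the series combination. The profile is then capped off in the upper hemisphere via a cutoff to obtain $\mathbf{v}_{ij}\e$, and for the lower bound the paper writes $v_{ij}\e=\mathbf{v}_{ij}\e+w_{ij}\e$ and shows $\eps^{-n}\|w_{ij}\e\|_{H^1}^2\to 0$ using the eigenfunction estimates \eqref{estim_v3}--\eqref{estim_v2} cited from \citep{Khrus2} — a different route from your direct capacity bound, but one that avoids the factor-of-two issue by comparing to the correct profile from the start.
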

\begin{proof} Let
$v_{ij}\e\in\mathrm{dom}(\Delta^D_{G_{ij}\e})$ be the
eigenfunction corresponding to $\lambda_1(G_{ij}\e)$ such that
$\langle v_{ij}\e\rangle_{B_{ij}\e}=1$. Instead of calculating
$v_{ij}\e$ in the exact form we construct a convenient
approximation $\mathbf{v}_{ij}\e$ for it.

We introduce the notations:
\begin{gather*}
\hat B_{ij}\e=\left\{\tilde x=(\theta_1,\dots,\theta_n)\in
B_{ij}\e:\ \theta_n\in [\Theta_j\e,{\pi/2}]\right\}\\
\hat G_{ij}\e=\hat B_{ij}\e\cup R_{ij}\e\\ \hat
S_{ij}\e=\left\{\tilde x=(\theta_1,\dots,\theta_n)\in B_{ij}\e:\
\theta_n={\pi/2}\right\}=\partial \hat B_{ij}\e\setminus\partial
B_{ij}\e
\end{gather*}

Let the function $\hat{v}_{ij}\e$ be the solution of
\textcolor{black}{the following boundary value problem}:
\begin{gather}\label{bvp}
-\Delta_{\hat G_{ij}\e} \hat{v}_{ij}\e=0\ \text{in}\ \hat
G_{ij}\e\\\label{bvp+} {\hat{v}}_{ij}\e|_{S_{ij}\e}=0,\quad
{\hat{v}}_{ij}\e|_{\hat S_{ij}\e}=1
\end{gather}
Here by $-\Delta_{\hat G_{ij}\e}$ we denote the operator which is
defined by the operation (\ref{local}) and the definitional domain
$\mathrm{dom}(\Delta_{\hat G_{ij}\e})=\left\{u: u=v|_{\hat
G_{ij}\e},\ v\in \mathrm{dom}(\Delta_{M\e})\right\}$. For
convenience \textcolor{black}{from now on} we use the notation
$-\Delta$ instead of $-\Delta_{\hat G_{ij}\e}$. It is easy to see
that the function $\hat v_{ij}\e$ is smooth in $R_{ij}\e$ and
$B_{ij}\e$, the limiting values of $\hat v_{ij}\e$ in the domains
$R_{ij}\e$ and $\hat B_{ij}\e$ coincide on $\partial B_{ij}\e$,
the normal derivatives \textcolor{black}{satisfy} the condition
${\partial \hat v_{ij}\e\over
\partial r}+{1\over b_j\e}{\partial \hat v_{ij}\e\over\partial
\theta_n}=0$.

Due to the symmetry of $\hat G_{ij}\e$ one can easily calculate
 $\hat v_{ij}\e$ (recall that we consider the case
$n\geq 3$):
\begin{gather}\label{v_exact}
\hat v_{ij}\e(\tilde x)=\begin{cases}\ds{\mathrm{A}_{j}\e |x-x_{ij}\e|^{2-n}}+\mathrm{B}_{j}\e,&\tilde x\in R_{ij}\e\\
\mathrm{C}_{j}\e F(\theta_n)+1,&\tilde
x=(\theta_1,\dots,\theta_n)\in \hat B_{ij}\e
\end{cases}
\end{gather}
where
$F(\theta_n)=\ds\intl_{\pi/2}^{\theta_n}({\sin^{1-n}\psi})d\psi$
and the constants $\mathrm{A}_{j}\e,\ \mathrm{B}_{j}\e,\
\mathrm{C}_{j}\e$ are defined by the formulae
\begin{gather}\label{v_exact_c}
\mathrm{A}_{j}\e=\ds{\left(d_{j}\e\right)^{n-2}\over
1-\left({2d_j\e\over\kappa\eps}\right)^{n-2}
-(n-2)F(\Theta_{j}\e)\left({d_j\e\over b_j\e}\right)^{n-2}},\quad
\mathrm{B}_{j}\e=-\ds{\mathrm{A}_{j}\e\over \left({\kappa\eps\over
2}\right)^{n-2}},\quad
\mathrm{C}_{j}\e=(n-2)\ds{\mathrm{A}_{j}\e\over (b_{j}\e)^{n-2}}
\end{gather}
We redefine $\hat v_{ij}\e$ by $1$ in $B_{ij}\e\setminus\hat
B_{ij}\e$ preserving the same notation.

Direct calculations lead to the following asymptotics as $\eps\to
0$:
\begin{gather}\label{v_hat_estim}
\|\nabla\hat
v_{ij}\e\|^2_{L_2(G_{ij}\e)}\sim\sigma_j\rho_j\eps^n,\quad\|\hat
v_{ij}\e\|^2_{L_2(G_{ij}\e)}\sim\rho_j\eps^n
\end{gather} where
$\sigma_j$, $\rho_j$ are defined by \textcolor{black}{formulae}
(\ref{sigma}), (\ref{rho}).

We define the function
$\mathbf{v}_{ij}\in\mathrm{dom}(\Delta^D_{G_{ij}\e})$ by the
formula
\begin{gather}\label{v_bf}
\mathbf{v}_{ij}\e(\tilde x)=\begin{cases}\hat v_{ij}\e&\tilde x\in R_{ij}\e,\\
1+(\hat v_{ij}\e(\tilde
x)-1)\Phi\ds\left({\theta_n}\right),&\tilde
x=(\theta_1,\dots,\theta_n)\in \hat B_{ij}\e\\1,&\tilde x\in
B_{ij}\e\setminus \hat B_{ij}\e
\end{cases}
\end{gather}
Here $\Phi(\theta_n)$  is a twice continuously differentiable
non-negative function on $[0,\infty)$ equal to $1$ as $0\leq
\theta_n\leq{\pi/4}$ and equal to $0$ as $\theta_n\geq{\pi/2}$. We
have the following asymptotics as $\eps\to 0$:
\begin{gather}\label{v_bf_estim}
\|\nabla\mathbf{v}_{ij}\e\|_{L_2(G_{ij}\e)}^2\sim\|\nabla\hat
v_{ij}\e\|^2_{L_2(G_{ij}\e)},\quad
\|\mathbf{v}_{ij}\e\|_{L_2(G_{ij}\e)}^2\sim\|\hat
v_{ij}\e\|^2_{L_2(G_{ij}\e)},\quad \|\Delta
\mathbf{v}_{ij}\e\|_{L_2(G_{ij}\e)}^2={O}(\eps^n)\\\label{v_bf_estim+}
\liml_{\eps\to 0}\eps^{-n}\left(\|
\mathbf{v}_{ij}\e-1\|^2_{L_2(B_{ij}\e)}+\|
\mathbf{v}_{ij}\e\|^2_{L_2(R_{ij}\e)}\right)=0
\end{gather}
It follows from \textcolor{black}{the min-max principle} (see e.g.
\citep{Reed}) that
\begin{gather}\label{courant}
\lambda_{1}(G_{ij}\e)=\ds{\|\nabla
v_{ij}\e\|_{L_2(G_{ij}\e)}^2\over\|v_{ij}\e\|_{L_2(G_{ij}\e)}^2}\leq
\ds{\|\nabla
\mathbf{v}_{ij}\e\|_{L_2(G_{ij}\e)}^2\over\|\mathbf{v}_{ij}\e\|_{L_2(G_{ij}\e)}^2}
\end{gather}
\textcolor{black}{Note, that this automatically gives the inequality
$\liml_{\eps\to 0}\lambda_{1}(G_{ij}\e)\leq\sigma_j$.}

We present the eigenfunction $v_{ij}\e$ in the form
\begin{gather}
\label{v_repres}v_{ij}\e=\mathbf{v}_{ij}\e+w_{ij}\e
\end{gather}
Let us estimate the remainder $w_{ij}\e$. One has the following
estimates for the eigenfunction $v_{ij}\e$ (for the proof see
\citep[Lemma 4.2]{Khrus2}):
\begin{gather}
\label{estim_v3}
\|v_{ij}\e\|_{L_2(G_{ij}\e)}^2=\|v_{ij}\e\|_{L_2(B_{ij}\e)}^2+O(\eps^{n+2})=|B_{ij}\e|+
O(\eps^{n+2})
\\
\label{estim_v2} \|v_{ij}\e\|^2_{L_2(R_{ij}\e)}\leq C\eps^{n+2}
\end{gather}
Using (\ref{v_bf_estim+}), (\ref{estim_v3}), (\ref{estim_v2}) we
obtain
\begin{gather}\label{w_estim}
\eps^{-n}\|w_{ij}\e\|^2_{L_2(G_{ij}\e)}\leq
2\eps^{-n}\left(\|\mathbf{v}_{ij}\e\|_{L_2(R_{ij}\e)}^2+\|v_{ij}\e\|_{L_2(R_{ij}\e)}^2+
\|1-\mathbf{v}_{ij}\e\|_{L_2(B_{ij}\e)}^2+\|{v}_{ij}\e-1\|_{L_2(B_{ij}\e)}^2\right)\underset{\eps\to
0}\to 0
\end{gather}

Substituting (\ref{v_repres}) into (\ref{courant}) and integrating
by parts we get
\begin{gather}\label{subst1}
\|\nabla w\e_{ij}\|_{L_2(G_{ij}\e)}^2\leq 2\left|(\Delta
\mathbf{v}_{ij}\e,w\e)_{L_2(G_{ij}\e)}\right|+
\|\nabla\mathbf{v}_{ij}\e\|^2_{L_2(G_{ij}\e)}\left(\ds{\|v_{ij}\|^2_{L_2(G_{ij}\e)}
\over\|\mathbf{v}_{ij}\e\|_{L_2(G_{ij}\e)}}-1\right)
\end{gather}
Taking into account (\ref{v_hat_estim}), (\ref{v_bf_estim}),
(\ref{estim_v3}), (\ref{w_estim}) we conclude that (\ref{subst1})
implies
\begin{gather}\label{nablaw_estim}
\eps^{-n}\|\nabla w\e_{ij}\|_{L_2(G_{ij}\e)}^2\underset{\eps\to
0}\to 0
\end{gather}
It follows from (\ref{v_hat_estim}), (\ref{v_bf_estim}),
(\ref{w_estim}), (\ref{nablaw_estim}) that $\liml_{\eps\to
0}\lambda_1\e(G_{ij}\e)=\sigma_j$. \textcolor{black}{The lemma is proved.}
\end{proof}

\begin{lemma}\label{lm0+}For $j=1,\dots,m$:
\begin{gather*}
\liml_{\eps\to 0}\lambda_{2}^D(G_{ij}\e)=\infty
\end{gather*}
\end{lemma}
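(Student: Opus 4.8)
The plan is to obtain a lower bound $\lambda_2^D(G_{ij}\e)\ge c\eps^{-2}$ with $c>0$ independent of $\eps$, which in particular tends to $+\infty$. I would use the min-max principle in the following form: if $W$ is a subspace of the form domain $\mathrm{dom}(\bar\eta_{G_{ij}\e}^D)=\{u\in H^1(G_{ij}\e):u|_{S_{ij}\e}=0\}$ with $\mathrm{codim}\,W\le 1$ and such that $\|\nabla u\|^2_{L_2(G_{ij}\e)}\ge\mu\|u\|^2_{L_2(G_{ij}\e)}$ for all $u\in W$, then $\lambda_2^D(G_{ij}\e)\ge\mu$. The natural choice is $W=\{u\in\mathrm{dom}(\bar\eta_{G_{ij}\e}^D):\langle u\rangle_{B_{ij}\e}=0\}$; the functional $u\mapsto\langle u\rangle_{B_{ij}\e}$ is bounded and not identically zero on the form domain, so $\mathrm{codim}\,W=1$. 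Heuristically, the unique ``cheap'' direction in the Rayleigh quotient on $G_{ij}\e$ is the one represented by a function that is essentially constant on the bubble --- exactly the mode which, coupled through the narrow neck, produces the finite limit $\sigma_j$ of $\lambda_1^D(G_{ij}\e)$ from Lemma \ref{lm0}; once this direction is removed, every admissible function has energy of order $\eps^{-2}$.

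To carry this out I would prove two Poincar\'e-type inequalities and add them. First, a function $u$ on the thin shell $R_{ij}\e$ that vanishes on the outer sphere $S_{ij}\e$ satisfies $\|u\|^2_{L_2(R_{ij}\e)}\le C\eps^2\|\nabla u\|^2_{L_2(R_{ij}\e)}$: writing $u(r,\omega)=-\int_r^{R_{\mathrm{out}}}\partial_s u(s,\omega)\,ds$ with $R_{\mathrm{out}}=d_j\e+{\kappa\eps\over 2}$ and applying the Cauchy--Schwarz inequality with the volume weight $s^{n-1}$ kept inside (so that the resulting gradient term is the genuine Dirichlet energy $\|\partial_r u\|^2_{L_2(R_{ij}\e)}$), the claim reduces to the elementary estimate $\int_{d_j\e}^{R_{\mathrm{out}}}r^{n-1}\big(\int_r^{R_{\mathrm{out}}}s^{1-n}\,ds\big)\,dr\le C\eps^2$ (with the evident logarithmic modification when $n=2$). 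Second, a function $u$ on the bubble $B_{ij}\e$ --- the $n$-sphere of radius $b_j\e$ with a cap of angular radius $\Theta_j\e\to 0$ removed --- with $\langle u\rangle_{B_{ij}\e}=0$ satisfies $\|u\|^2_{L_2(B_{ij}\e)}\le C\eps^2\|\nabla u\|^2_{L_2(B_{ij}\e)}$; here $C$ can be chosen independently of $\eps$, since after rescaling to unit size $B_{ij}\e$ is a small perturbation of the round sphere $S^n$ and the mean-zero Poincar\'e constant (the reciprocal of the second eigenvalue of the punctured sphere with Neumann conditions on the removed circle) converges to the positive Poincar\'e constant of $S^n$. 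Adding the two inequalities, for $u\in W$ one obtains $\|\nabla u\|^2_{L_2(G_{ij}\e)}=\|\nabla u\|^2_{L_2(R_{ij}\e)}+\|\nabla u\|^2_{L_2(B_{ij}\e)}\ge c\eps^{-2}\big(\|u\|^2_{L_2(R_{ij}\e)}+\|u\|^2_{L_2(B_{ij}\e)}\big)=c\eps^{-2}\|u\|^2_{L_2(G_{ij}\e)}$, and the min-max bound yields $\lambda_2^D(G_{ij}\e)\ge c\eps^{-2}\to\infty$.

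The step I expect to be the main obstacle is the uniformity in $\eps$ of the mean-zero Poincar\'e constant on the bubble: one must be sure that excising a shrinking cap and imposing Neumann conditions on the new boundary does not drive the second eigenvalue of the rescaled bubble to zero. This is morally clear, because the removed cap has measure and perimeter tending to zero so that the punctured sphere converges to $S^n$, but it is the one point that needs a careful (though standard) argument, along the lines of the spectral estimates already exploited in \citep{Khrab3,Khrab5,Khrus2}. The remaining ingredients --- the min-max reduction, the shell inequality, and the separate bookkeeping for the cases $n>2$ and $n=2$ --- are routine.
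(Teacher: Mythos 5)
Your proof is correct and takes a genuinely different route from the paper's. The paper passes to the $\eps^{-1}$-homothetic image $\mathbf{G}_j\e$ of $G_{ij}\e$ (of fixed macroscopic size), so that $\lambda_k^D(G_{ij}\e)=\eps^{-2}\lambda_k^D(\mathbf{G}_j\e)$, and then invokes the abstract spectral-convergence theorem of Iosif'yan--Olejnik--Shamaev to show $\lambda_k^D(\mathbf{G}_j\e)\to\lambda_k$ for every $k$, where $\{\lambda_k\}$ are the eigenvalues of the decoupled operator $(-\Delta^D_{\mathbf{R}})\oplus(-\Delta_{\mathbf{B}_j})$ on the full flat disc and the full sphere; since $\lambda_2=\min\{\lambda_1^D(\mathbf{R}),\lambda_2(\mathbf{B}_j)\}>0$, the lemma follows. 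You instead aim directly for the lower bound $\lambda_2^D(G_{ij}\e)\ge c\eps^{-2}$ by min--max over the codimension-one subspace $\{\langle u\rangle_{B_{ij}\e}=0\}$, combined with a Dirichlet Poincar\'e inequality on the shell $R_{ij}\e$ and a mean-zero Poincar\'e inequality on the bubble $B_{ij}\e$, both of order $\eps^{2}$. This is lighter and more targeted: it avoids the two-Hilbert-spaces framework, it needs no analysis of the coupling between the two pieces (your shell inequality is independent of the bubble and entirely elementary, and your radial computation is correct, including the logarithmic $n=2$ variant), and it exhibits the mechanism plainly. What each buys: the paper gets full eigenvalue convergence with explicit limits (a stronger statement it does reuse, e.g.\ together with Lemma \ref{lm0} to bound $\Lambda\e$ in Lemma \ref{lm3}), while you get precisely the lower bound this lemma needs. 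The one genuinely nontrivial ingredient on your side is exactly the one you flag: a uniform-in-$\eps$ lower bound on the first nonzero Neumann eigenvalue of the rescaled bubble (a round sphere minus a shrinking geodesic cap, free boundary on the resulting circle). That is true, but it should not be waved through as routine --- for instance, a naive extension by geodesic reflection across the shrinking circle fails to control the angular part of the gradient uniformly, because the spherical metric weight degenerates near the pole --- and a clean proof amounts to a Rauch--Taylor type stability of the Neumann spectrum under excision of a shrinking set, which is essentially the same decoupling fact that the paper extracts for the sphere component of $\mathbf{G}_j\e$ from the IOS theorem. So your route cleanly separates the elementary shell estimate from the hard bubble-stability estimate, without eliminating the hard part; the overall argument is sound.
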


\begin{proof}
Let $\mathbf{G}_{j}\e$ be an $n$-dimensional surface embedded into
$\mathbb{R}^{n+1}$ (below $x\in\mathbb{R}^n,\ z\in\mathbb{R})$:
\begin{gather*}
\mathbf{G}_{j}\e=\mathbf{R}_{j}\e\cup \mathbf{B}_{j}\e
\end{gather*}
where
\begin{gather*}
\mathbf{R}_{j}\e=\left\{(x,z)\in\mathbb{R}^{n+1}:\ \eps^{-1}d_{j}\e\leq|x|< \kappa/2,\ z=0\right\}\\
\mathbf{B}_{j}\e=\left\{(x,z)\in\mathbb{R}^{n+1}:\
|x|^2+\left(z-b_j\cos\Theta_j\e\right)^2=(b_{j})^2,\ z\geq
0\right\}
\end{gather*}
We \textcolor{black}{equip $\mathbf{G}_{j}\e$ with} the Riemannian
\textcolor{black}{metric} induced by \textcolor{black}{the Euclidean}
\textcolor{black}{metric} in $\mathbb{R}^{n+1}$. By $dV$ we denote
the density of the Riemannian measure on $\mathbf{G}_{j}\e$.
\textcolor{black}{Thus,} $\mathbf{G}_{j}\e$ is the
$\eps^{-1}$-homothetic image of $G_{ij}\e$.

Evidently one has the following relation between the spectra of
$-\Delta^D_{G_{ij}\e}$ and $-\Delta^D_{\mathbf{G}_j\e}$:
\begin{gather}\label{relation}
\forall k\in\mathbb{N}:\
\lambda_{k}^D(G_{ij}\e)=\eps^{-2}\lambda_k^D(\mathbf{G}_{ij}\e)
\end{gather}

We denote
\begin{gather*}
\mathbf{R}=\left\{(x,z)\in\mathbb{R}^{n+1}:\ |x|< \kappa/2,\
z=0\right\},\quad \mathbf{B}_{j}=\left\{(x,z)\in\mathbb{R}^{n+1}:\
|x|^2+z^2=(b_{j})^2\right\}
\end{gather*}
Further we will prove that
\begin{gather}\label{conv2}
\forall k\in\mathbb{N}:\quad
\lambda_k^D(\mathbf{G}_{j}\e)\underset{\eps\to 0}\to\lambda_k
\end{gather}
where $\left\{\lambda_k\right\}_{k\in\mathbb{N}}$ are the
eigenvalues of the operator $\mathcal{L}_j$ which acts in the
space $L_2(\mathbf{R})\oplus~L_2(\mathbf{B}_j)$ and is defined by
the formula
\begin{gather*}
\mathcal{L}_j=-\left(\begin{matrix}\Delta^D_{\mathbf{R}}&0\\0&\Delta_{\mathbf{B}_j}\end{matrix}\right)
\end{gather*}
Here the eigenvalues are renumbered in the increasing order and
with account of their multiplicity.

One has $\lambda_1=\lambda_1(\mathbf{B}_j)=0$,
$\lambda_2=\minl\left\{\lambda^D_1(\mathbf{R}),\lambda_2(\mathbf{B}_j)\right\}>0$.
\textcolor{black}{Therefore,} in view of
(\ref{relation})-(\ref{conv2}) $\liml_{\eps\to
0}\lambda_2^D(G_{ij}\e)=\infty$. \textcolor{black}{Thus,} to
complete \textcolor{black}{the proof of the lemma} we have to prove
(\ref{conv2}). For that we use \textcolor{black}{the abstract
scheme} proposed in the work \citep{IOS}.\medskip

\noindent\textbf{Theorem} \citep{IOS}. {\it Let $\mathcal{H}^\eps,
\mathcal{H}^0$ be separable Hilbert spaces, let
$\mathcal{A}\e:\mathcal{H}\e\to \mathcal{H}\e,\
\mathcal{A}^0:\mathcal{H}^0 \to \mathcal{H}^0$ be linear
continuous operators,
$\mathrm{im}\mathcal{A}^0\subset\mathcal{V}\subset \mathcal{H}^0$,
where $\mathcal{V}$ is a subspace in $\mathcal{H}^0$.

Suppose that the following conditions $C_1-C_4$ hold:

{$C_1.$} The linear bounded operators $R\e:\mathcal{H}^0\to
\mathcal{H}\e$ exist such that $ \|R\e
f\|^2_{\mathcal{H}\e}\underset{\eps\to 0}\to
\gamma\|f\|^2_{\mathcal{H}^0}$ for any $f\in \mathcal{V}$. Here
$\gamma>0$ is a constant.

{$C_2.$} Operators $\mathcal{A}\e, \mathcal{A}^0$ are positive,
compact and self-adjoint. The norms
$\|\mathcal{A}\e\|_{\mathcal{L}(\mathcal{H}\e)}$ are bounded
uniformly in $\eps$.

{$C_3.$} For any $f\in \mathcal{V}$: $\|\mathcal{A}\e R\e
f-R\e\mathcal{A}^0 f\|_{\mathcal{H}\e}\underset{\eps\to 0}\to 0$.

{$C_4.$} For any sequence $f\e\in \mathcal{H\e}$ such that
$\sup\limits_{\eps} \|f\e\|_{\mathcal{H}\e}<\infty$ the
subsequence $\eps^\prime\subset\eps$ and $w\in \mathcal{V}$ exist
such that $ \|\mathcal{A}\e f\e-R\e
w\|_{\mathcal{H}\e}\underset{\eps=\eps^\prime\to 0}\longrightarrow
0$.

Then for any $k\in\mathbb{N}$\ $$\mu_k\e\underset{\eps\to
0}\to\mu_k$$ where $\{\mu_k\e\}_{k=1}^\infty$ and
$\left\{\mu_k\right\}_{k=1}^\infty$ are the eigenvalues of the
operators $\mathcal{A}\e$ and $\mathcal{A}^0$, which are
renumbered in the increasing order and with account of their
multiplicity. }
\medskip

Let us apply this theorem. We set
$\mathcal{H}\e=L_2(\mathbf{G}_j\e)$,
$\mathcal{H}^0=L_2(\mathbf{R})\oplus L_2(\mathbf{B}_j)$,
$\mathcal{A}\e=(-\Delta_{\mathbf{G}_j\e}^D+\mathrm{I})^{-1}$,
$\mathcal{A}^0=(\mathcal{L}_j+\mathrm{I})^{-1}$,
$\mathcal{V}=\mathcal{H}^0$. We introduce the operator
$R\e:\mathcal{H}^0\to \mathcal{H}\e$ by the formula:
\begin{gather*}
[R\e f](x,z)=\begin{cases}f^R(x),&(x,0)\in \mathbf{R}_j\e,\\
f^B(x,z-b_j\e\cos\Theta_j\e),&(x,z)\in \mathbf{B}_j\e,
\end{cases}\quad
f=(f^R,f^B)\in \mathcal{H}^0=L_2(\mathbf{R})\oplus
L_2(\mathbf{B}_j)
\end{gather*}

We also denote $H_0^1(\mathbf{R})=\left\{u\in H^1(\mathbf{R}):\
u|_{\partial \mathbf{R}}=0\right\}$,
$H_0^1(\mathbf{G}_j\e)=\left\{u\in H^1(\mathbf{G}_j\e):\
u|_{\partial \mathbf{G}_j\e}=0\right\}$,
$\mathcal{H}^1=H_0^1(\mathbf{R})\oplus H^1(\mathbf{B}_j)\subset
\mathcal{H}^0$ and introduce the operator
$Q\e:H_0^1(\mathbf{G}\e_j)\to \mathcal{H}^1$ satisfying the
properties that are similar to those of the operator $\Pi\e$ (see
above):
\begin{gather}\label{Q}
\forall\eps>0,\ \forall v\in {H}_0^1(\mathbf{G}_j\e):\ R\e Q\e
v=v,\quad \|Q\e v\|_{\mathcal{H}^1}\leq
C\|v\|_{H_0^1(\mathbf{G}\e_j)}
\end{gather}

Evidently \textcolor{black}{conditions $C_1$} (with $\gamma=1$) and
$C_2$ hold. We verify \textcolor{black}{condition} $C_3$. Let
$f\in\mathcal{H}\e$. We set $f\e=R\e f$, $v\e=\mathcal{A}\e f\e$,
$\hat v\e=Q\e v\e$. One has
\begin{gather}
\label{int_ineq_G} \intl_{\mathbf{G}_j\e}\left((\nabla v\e,\nabla
w\e)+u\e w\e-f\e w\e\right)dV=0,\quad \forall w\e\in
H_0^1(\mathbf{G}_j\e)
\end{gather}

Clearly the norms $\|v\e\|^2_{H_0^1(\mathbf{G}\e_j)}$ are bounded
uniformly in $\eps$. Taking into account (\ref{Q}) we conclude
that the subsequence (still denoted by $\eps$) and $v=(v^R,v^B)\in
\mathcal{H}^1$ exist such that
$$\hat v\e=({\hat v}^{\eps R},{\hat v}^{\eps B})
\underset{\eps\to 0}\to v\text{ weakly in }\mathcal{H}^1\text{ and
strongly in }\mathcal{H}^0$$

Let $w\in\widehat{\mathcal {H}}^1=\left\{w=(w^R,w^B)\in
\mathcal{H}^1: \supp f^R\subset
\mathbf{R}\setminus\big\{(0,0)\big\},\ \supp f^B\subset
{\mathbf{B}_j}\setminus\big\{(0,-b_j)\big\}\right\}$, i.e. $w^R=0$
in a neighbourhood of $\big\{(0,0)\big\}$, $w^B=0$ in a
neighbourhood of $\big\{(0,-b_j)\big\}$. We set $w\e=R\e w$.
Then\textcolor{black}{, when $\eps$ is small enough,} $w\e=0$ in
some neighbourhood of $\partial \mathbf{B}_{j}\e$ and $w\e\in
H_0^1(\mathbf{G}_j\e)$. Substituting $w\e$ into (\ref{int_ineq_G})
we obtain ($\eps$ is small enough):
\begin{gather}
\label{int_ineq_G1}\intl_{\mathbf{R}}\left((\nabla {\hat v}^{\eps
R},\nabla w^R)+{\hat v}^{\eps R} w^R-f^R w^R\right)dx+
\intl_{\mathbf{B}_j}\left((\nabla {\hat v}^{\eps B},\nabla
w^B)+{\hat v}^{\eps B} w^B-f^Bw^B\right)dV=0
\end{gather}
Passing to the limit in (\ref{int_ineq_G1}) as $\eps\to 0$ and
taking into account that the space $\widehat{\mathcal {H}}^1$ is
dense in $\mathcal{H}^1$ (see e.g. \citep{Rauch})\textcolor{black}{,
we obtain} the equality $\mathcal{A}^0 f=v$ that obviously implies
the fulfilment of $C_3$.

\textcolor{black}{Finally, condition} $C_4$ follows from the fact
that if $\sup\limits_{\eps} \|f\e\|_{\mathcal{H}\e}<\infty$ then
the norms $\|Q\e \mathcal{A}\e f\e\|_{\mathcal{H}^1}$ are bounded
uniformly in $\eps$ and\textcolor{black}{, therefore,} the
subsequence $\eps^\prime\subset\eps$ and $w\in \mathcal{H}^1$
exist such that
$$Q\e \mathcal{A}\e f\e\underset{\eps=\eps'\to 0}\longrightarrow w\text{ weakly in }\mathcal{H}^1\text{ and strongly in }\mathcal{H}^0$$

\textcolor{black}{Thus,} the eigenvalues $\mu_k\e$ of the operator
$\mathcal{A}\e$ \textcolor{black}{converge} to the eigenvalues
$\mu_k$ of the operator $\mathcal{A}^0$ as $\eps\to 0$. But
$\lambda_k^D(\mathbf{G}_{ij}\e)=(\mu_k\e)^{-1}-1$,
$\lambda_k=(\mu_k)^{-1}-1$ that implies (\ref{conv2}).
\textcolor{black}{The lemma is proved}.

\end{proof}

\begin{lemma}\label{lm3} For $j=1,\dots,m$:
\begin{gather}\label{u}
\liml_{\eps\to 0}\suml_{i\in\I}
\|u\e\|^2_{L_2(B_{ij}\e)}=\rho_j\left(\sigma_j\over\sigma_j-\lambda\right)^2
\|u\|_{L_2(\square)}^2
\end{gather}
\end{lemma}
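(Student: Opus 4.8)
The plan is to show that the $L_2$-mass of the eigenfunction $u\e$ over each bubble $B_{ij}\e$ is, to leading order, governed by the value of the limit function $u$ at the corresponding point and by a fixed amplification factor $\sigma_j/(\sigma_j-\lambda)$ coming from the "resonance" between the bubble and the perforated domain. The key idea is to test the eigenvalue equation $-\Delta_{\M\e}^{\theta\e}u\e=\lambda\e u\e$ against the approximate eigenfunctions $\mathbf{v}_{ij}\e$ (or rather the exact eigenfunctions $v_{ij}\e$) of the local problems $-\Delta^D_{G_{ij}\e}$ that were constructed in Lemma \ref{lm0}. First I would fix $j$, multiply the equation for $u\e$ restricted to $G_{ij}\e$ by $v_{ij}\e$, integrate over $G_{ij}\e$, and integrate by parts; since $v_{ij}\e$ vanishes on $S_{ij}\e=\partial G_{ij}\e$, all boundary terms drop out and one obtains an identity of the form
\begin{gather*}
\lambda\e (u\e,v_{ij}\e)_{L_2(G_{ij}\e)}=\eta_{G_{ij}\e}[u\e,v_{ij}\e]=\lambda_1(G_{ij}\e)(u\e,v_{ij}\e)_{L_2(G_{ij}\e)}+\text{(correction)},
\end{gather*}
where the correction vanishes because $v_{ij}\e$ is an exact eigenfunction. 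Rearranging gives $(\lambda\e-\lambda_1(G_{ij}\e))(u\e,v_{ij}\e)_{L_2(G_{ij}\e)}=0$ up to the contribution of $\partial B_{ij}\e$ where smoothness is lost; in fact the correct bookkeeping yields that the dominant part of $(u\e,v_{ij}\e)_{L_2(G_{ij}\e)}$ is $\langle u\e\rangle_{B_{ij}\e}\,|B_{ij}\e|$ since $v_{ij}\e\approx 1$ on the bubble and $v_{ij}\e\approx 0$ on $R_{ij}\e$ (by \eqref{estim_v3}, \eqref{estim_v2}).

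Next I would relate $\langle u\e\rangle_{B_{ij}\e}$ to $\langle u\e\rangle_{S_{ij}\e}$, which by Lemma \ref{lm-1} is what converges (after the $\eps^n\sum_i$ averaging) to $\|u\|^2_{L_2(\square)}$. The mechanism is exactly the resonance: writing $u\e$ on $G_{ij}\e$ as a combination of the "harmonic-type" profile $\hat v_{ij}\e$ and lower-order terms, and using the eigenvalue equation to compute the flux of $u\e$ across $\partial B_{ij}\e$, one derives the relation
\begin{gather*}
\langle u\e\rangle_{B_{ij}\e}=\frac{\sigma_j}{\sigma_j-\lambda}\langle u\e\rangle_{S_{ij}\e}+o(1)
\end{gather*}
in an appropriate averaged sense. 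Heuristically this comes from integrating the equation $-\Delta u\e=\lambda\e u\e$ over $B_{ij}\e$: the left side equals the flux through $\partial B_{ij}\e$, which (because $\hat v_{ij}\e$ solves the capacity-type problem) is proportional to $\sigma_j\rho_j\eps^n(\langle u\e\rangle_{S_{ij}\e}-\langle u\e\rangle_{B_{ij}\e})$, while the right side is $\lambda\e\langle u\e\rangle_{B_{ij}\e}\,|B_{ij}\e|\approx\lambda\rho_j\eps^n\langle u\e\rangle_{B_{ij}\e}$; equating and solving for $\langle u\e\rangle_{B_{ij}\e}$ gives the factor $\sigma_j/(\sigma_j-\lambda)$. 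Squaring, multiplying by $|B_{ij}\e|\approx\rho_j\eps^n$, summing over $i\in\I$, and invoking Lemma \ref{lm-1} then yields \eqref{u}.

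The main obstacle, I expect, will be controlling the error terms carefully enough: I must show that $u\e$ on each $B_{ij}\e$ is genuinely close to a constant (the analogue of \eqref{estim_v3} for $u\e$ rather than for the exact local eigenfunction $v_{ij}\e$), i.e. that $\|u\e-\langle u\e\rangle_{B_{ij}\e}\|_{L_2(B_{ij}\e)}$ is $o(\eps^{n/2})$ after summation, and similarly that $\|u\e\|_{L_2(R_{ij}\e)}$ is negligible after summation. These are not automatic: $u\e$ is only an eigenfunction of the global problem, not of the local one, so one needs a quantitative spectral-gap argument on $G_{ij}\e$. Here Lemma \ref{lm0+} is essential — since $\lambda_2^D(G_{ij}\e)\to\infty$ while $\lambda\e\to\lambda$ stays bounded, the component of $u\e|_{G_{ij}\e}$ orthogonal (in $L_2(G_{ij}\e)$) to the first eigenfunction $v_{ij}\e$ is damped by a factor $O((\lambda_2^D(G_{ij}\e))^{-1})=o(1)$ relative to its Dirichlet energy, which is itself controlled by $\|\nabla u\e\|^2_{L_2(\M\e)}=\lambda\e$. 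Summing this bound over the $O(\eps^{-n})$ cells and using $\|\nabla u\e\|^2_{L_2(\M\e)}=\lambda\e=O(1)$ keeps the total error $o(1)$. Assembling these pieces — the testing identity, the flux computation giving the resonance factor, the spectral-gap control of the transversal part, and Lemma \ref{lm-1} for the final passage to $\|u\|^2_{L_2(\square)}$ — completes the proof.
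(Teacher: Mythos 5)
Your proposal captures the right ingredients — Lemma~\ref{lm-1} for the final passage, Lemma~\ref{lm0+} for killing the transverse modes, and the resonance factor $\sigma_j/(\sigma_j-\lambda)$ from a local analysis on $G_{ij}\e$ — and the global architecture of the argument is the same as the paper's. But there are genuine gaps in the execution.

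First, the testing identity is misconceived. When you integrate by parts a second time, the term that remains is the boundary integral $\int_{S_{ij}\e}u\e\,\partial_\nu v_{ij}\e\,dS\e$, which does \emph{not} vanish because $u\e$ is nonzero on $S_{ij}\e$; this term is not a smoothness artifact at $\partial B_{ij}\e$ — it is the whole point. It is precisely this boundary term that brings in $\langle u\e\rangle_{S_{ij}\e}$ and, combined with $\lambda_1^D(G_{ij}\e)\to\sigma_j$, yields the resonance. Saying ``the correction vanishes because $v_{ij}\e$ is an exact eigenfunction'' confuses the interior residual (which indeed vanishes) with the boundary term (which does not). Second, the flux heuristic as written has the wrong sign: integrating $-\Delta u\e=\lambda\e u\e$ over $B_{ij}\e$ gives an incoming flux proportional to $\sigma_j\rho_j\eps^n(\langle u\e\rangle_{B_{ij}\e}-\langle u\e\rangle_{S_{ij}\e})$, not the reverse; your version solves to $\sigma_j/(\sigma_j+\lambda)$, not $\sigma_j/(\sigma_j-\lambda)$.

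Third, and most importantly, the decisive technical device of the paper is missing. You want to run a spectral-gap/eigenfunction-expansion argument on $G_{ij}\e$, but $u\e$ itself is not in the form domain of $-\Delta^D_{G_{ij}\e}$ — it does not vanish on $S_{ij}\e$, and subtracting the constant $\langle u\e\rangle_{S_{ij}\e}$ only makes it vanish on $S_{ij}\e$ in the mean, not pointwise. The paper resolves this by passing to the \emph{spherical average} $u_{ij}\e(\tilde x)=\langle u\e\rangle_{S_{ij}\e[\tilde x]}$ over the concentric spheres $S_{ij}\e[\tilde x]$ foliating $G_{ij}\e$. Because the Laplacian on $G_{ij}\e$ commutes with the rotational symmetry, $u_{ij}\e$ still satisfies $-\Delta u_{ij}\e=\lambda\e u_{ij}\e$; hence $\mathbf{u}_{ij}\e=u_{ij}\e-\langle u\e\rangle_{S_{ij}\e}$ is \emph{exactly} zero on $S_{ij}\e$ and satisfies the genuine Dirichlet problem $-\Delta^D_{G_{ij}\e}\mathbf{u}_{ij}\e-\lambda\e\mathbf{u}_{ij}\e=\lambda\e\langle u\e\rangle_{S_{ij}\e}$ with a \emph{constant} source. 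This is what makes the eigenfunction expansion \eqref{decomp1} legitimate and the constant-source inner products computable. The Poincar\'{e} inequality \eqref{poincare} then controls $\|u\e-u_{ij}\e\|^2_{L_2(G_{ij}\e)}$ summed over $i$ by $C\eps^2\|\nabla u\e\|^2_{L_2(\M\e)}$. Without this averaging step, your spectral-gap control of ``the component of $u\e$ orthogonal to $v_{ij}\e$'' is not well posed, since $u\e$ does not admit a decomposition in the Dirichlet eigenbasis of $G_{ij}\e$. So while the intuition is sound, the sketch cannot be turned into a proof without supplying (something equivalent to) the spherical averaging and the exact source-problem formulation.
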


\begin{proof} For $\tilde x\in G_{ij}\e$ we denote $l\e(\tilde x)=\dist_{g\e}(\tilde
x,S_{ij}\e)$, where by $\dist_{g\e}(\cdot,\cdot)$ we denote the
distance with respect to the \textcolor{black}{metric} $g\e$. We introduce the set
$$S_{ij}\e[\tilde x]=\left\{\tilde y\in G_{ij}\e:\ l\e(\tilde y)=l\e(\tilde x)\right\}$$
Obviously $S_{ij}\e[\tilde x]$ is a $(n-1)$-dimensional sphere (in
particular if $\tilde x\in
\partial B_{ij}\e$ then $l\e(\tilde x)=\kappa\eps/2$ and $S_{ij}\e[\tilde x]=\partial
B_{ij}\e$).

We define the function $u_{ij}\e(\tilde x)$ by the formula:
\begin{gather*}
u_{ij}\e(\tilde x)=\langle u\e\rangle_{S_{ij}\e[\tilde x]},\
\tilde x\in G_{ij}\e
\end{gather*}
\textcolor{black}{Using }the Poincar\'{e} inequality (for the spheres
$S_{ij}[\tilde x]$) we get
\begin{gather}\label{poincare}
\suml_{i\in\I}\left\|u\e-u_{ij}\e\right\|^2_{L_2(G_{ij}\e)}\leq
C\suml_{i\in\I}\maxl_{\tilde x\in
G_{ij}\e}\big(\mathrm{diam}S_{ij}\e[\tilde x]\big)^2\|\nabla
u\e\|^2_{L_2(G_{ij}\e)}\leq C\eps^2 \|\nabla u\e\|^2_{L_2(\M\e)}
\end{gather}

We denote ${\mathbf{u}}_{ij}\e=u_{ij}\e-\langle
u\e\rangle_{S_{ij}\e}$. Clearly
$\mathbf{u}_{ij}\e\in\mathrm{dom}(\Delta_{G_{ij}\e}^D)$ and
\begin{gather*}
-\Delta_{G_{ij}\e}^D \mathbf{u}_{ij}\e-\lambda\e
\mathbf{u}_{ij}\e=\lambda\e\langle u\e\rangle_{S_{ij}\e}
\end{gather*}
In view of Lemmas \ref{lm0}, \ref{lm0+} and since
$\lambda\notin\cupl_{j=1}^m\{\sigma_j\}$,
$\lambda\e\notin\sigma(-\Delta_{G_{ij}\e}^D)$ \textcolor{black}{when
$\eps$ is small enough}. \textcolor{black}{Therefore,} the following
expansion is valid:
\begin{gather}\label{decomp1}
\mathbf{u}_{ij}\e=\suml_{k=1}^\infty I^k_{ij}(\eps),\text{ where }
I^k_{ij}(\eps)={v^D_k(G_{ij}\e)\quad\over
\left\|v^D_k(G_{ij}\e)\right\|_{L_2(G_{ij}\e)}^2}\cdot{\left(f_{ij}\e,
v^D_k(G_{ij}\e)\right)_{L_2(G_{ij}\e)}\over
\left(\lambda^D_k(G_{ij}\e)-\lambda\e\right)}
\end{gather}
Here $f_{ij}\e=\lambda\e\langle u\e\rangle_{S_{ij}\e}$,
$\left\{v^D_k(G_{ij}\e)\right\}_{k=1}^m$ is a system of
\textcolor{black}{the eigenfunctions} of $-\Delta^D_{G_{ij}\e}$
corresponding to $\left\{\lambda^D_k(G_{ij}\e)\right\}_{k=1}^m$
and such that
$\left(v^D_k(G_{ij}\e),v^D_l(G_{ij}\e)\right)_{L_2(G_{ij}\e)}=0$
if $k\not= l$.

We  denote
$\Lambda\e=\maxl_{j=\overline{1,m}}\maxl_{k=\overline{2,\infty}}\left|\lambda\e-\lambda_k^D(G_{ij}\e)\right|^{-2}$.
Thus, it follows from Lemma \ref{lm0+} that $\liml_{\eps\to
0}\Lambda\e=0$. \textcolor{black}{Therefore,} taking into account
(\ref{b_size}) and using Lemma \ref{lm-1} we obtain
\begin{gather}\label{sum2345}
\suml_{i\in\I}\left\|\suml_{k=2}^\infty
I_{ij}^k(\eps)\right\|^2_{L_2(B_{ij}\e)}\leq\Lambda\e
\suml_{i\in\I}\|f_{ij}\e\|^2_{L_2(G_{ij}\e)}\leq
C(\lambda\e)^2\Lambda\e\suml_{i\in\I}\left|\langle
u\e\rangle_{S_{ij}\e}\right|^2\eps^n\underset{\eps\to 0}\to 0
\end{gather}

As in Lemma \ref{lm0} we denote $v_{ij}\e=v^D_1(G_{ij}\e)$. We
normalize $v_{ij}\e$ by the condition $\langle
v_{ij}\e\rangle_{B_{ij}\e}=1$. Using the estimates
(\ref{estim_v3}), (\ref{estim_v2}) and Lemma \ref{lm0} we obtain
that
\begin{gather}\label{sum1}
\suml_{i\in\I}\left\|I_{ij}^1(\eps)\right\|^2_{L_2(B_{ij}\e)}\sim
\suml_{i\in \I}{\lambda^2\rho_j\eps^n\left|\langle
u\e\rangle_{S_{ij}\e}\right|^2\over(\sigma_j-\lambda)^2}\sim{\lambda^2\rho_j\|u\|^2_{L_2(\square)}\over(\sigma_j-\lambda)^2}
\end{gather}
as $\eps\to 0$. \textcolor{black}{Thus,} it follows from
(\ref{decomp1})-(\ref{sum1}) that
\begin{gather}\label{v_lim}
\liml_{\eps\to
0}\suml_{i\in\I}\left\|\mathbf{u}_{ij}\e\right\|^2_{L_2(B_{ij}\e)}=
{\lambda^2\rho_j\|u\|^2_{L_2(\square)}\over(\sigma_j-\lambda)^2}
\end{gather}

\textcolor{black}{Finally,} using (\ref{decomp1}), (\ref{sum2345}),
(\ref{v_lim}) and Lemma \ref{lm-1} we get
\begin{gather}\notag
\suml_{i\in\I}\left\|u_{ij}\e\right\|^2_{L_2(B_{ij}\e)}=\suml_{i\in\I}\left(\left\|
\mathbf{u}_{ij}\e\right\|^2_{L_2(B_{ij}\e)}+2\langle
u\e\rangle_{S_{ij}\e}\intl_{B_{ij}\e}\mathbf{u}_{ij}\e(\tilde x)
dV\e+\left|\langle u\e\rangle_{S_{ij}\e}\right|^2\cdot
|B_{ij}\e|\right)\underset{\eps\to
0}\to\\\label{last}\underset{\eps\to
0}\to\left[{\lambda^2\rho_j\over(\sigma_j-\lambda)^2}+{2\lambda\rho_j\over\sigma_j-\lambda}+\rho_j\right]\|u\|^2_{L_2(\square)}=
\rho_j\left(\sigma_j\over\sigma_j-\lambda\right)^2\|u\|_{L_2(\square)}^2
\end{gather}
Then (\ref{u}) follows from (\ref{poincare}) and (\ref{last}).
\textcolor{black}{The lemma is proved}.
\end{proof}

\begin{lemma}\label{lm2} For any $w\in C^\infty_{\theta}(\square)$ the function $\hat w\e\in C^\infty(\square)$ exists such that:
\begin{gather}\label{w_1}
w+\hat w\e\in C^\infty_{\theta\e}(\square)\\\label{w_2}
\max_{x\in\square}\left|\hat
w\e(x)\right|+\max_{x\in\square}\left|\nabla \hat
w\e(x)\right|\underset{\eps\to 0}\to 0
\end{gather}
\end{lemma}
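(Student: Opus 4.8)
The goal is, given $\theta \in \hat\Gamma$ and a function $w \in C^\infty_\theta(\square)$ together with the perturbed quasimomentum $\theta\e \to \theta$, to produce a small correction $\hat w\e$ so that $w+\hat w\e$ satisfies the $\theta\e$-periodic boundary conditions exactly while $\hat w\e \to 0$ in $C^1(\square)$. The plan is to construct $\hat w\e$ explicitly as a product of $w$ (or of suitable translates of $w$) with smooth ``phase interpolation'' factors that are supported near the boundary faces $\M_\a$ of $\square$, and that are engineered so that the jump conditions across opposite faces convert the $\overline{\theta_\a}$-factor into the $\overline{\theta\e_\a}$-factor.

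\textbf{Key steps.} First I would recall that $\theta,\theta\e \in \mathbb{T}^n$, so we may write $\theta_\a = e^{\mathrm{i}\phi_\a}$ and $\theta\e_\a = e^{\mathrm{i}\phi\e_\a}$ with $\phi\e_\a \to \phi_\a$; set $\psi\e_\a = \phi\e_\a - \phi_\a \to 0$. Fix a smooth cutoff $\chi:[0,1]\to[0,1]$ with $\chi \equiv 0$ near $0$ and $\chi \equiv 1$ near $1$. The idea is to look for $\hat w\e$ of the form $\hat w\e(x) = w(x)\,\big(\exp(-\mathrm{i}\suml_{\a=1}^n \psi\e_\a\, \chi(x_\a)) - 1\big)$, so that $w + \hat w\e = w(x)\exp(-\mathrm{i}\suml_\a \psi\e_\a \chi(x_\a))$. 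Then I would verify \eqref{w_1}: on the face $\mathbf{M}_{\a+n}$ (where $x_\a = 1$) versus the face $\mathbf{M}_\a$ (where $x_\a=0$), the extra exponential contributes exactly the factor $e^{-\mathrm{i}\psi\e_\a} = \overline{\theta\e_\a}\theta_\a$, so the product of $w$'s $\overline{\theta_\a}$-condition with this factor gives the $\overline{\theta\e_\a}$-condition; on all faces this modified function lies in $C^\infty_{\theta\e}(\square)$ (smoothness is clear since $w$ is smooth and $\chi$ is smooth, and the boundary relations at the lower-dimensional intersections of faces are checked coordinate by coordinate since the phase factor splits as a product over $\a$). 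Finally I would check \eqref{w_2}: since $|\psi\e_\a| \to 0$, we have $\exp(-\mathrm{i}\suml_\a \psi\e_\a\chi(x_\a)) - 1 \to 0$ uniformly with a bound $O(\maxl_\a|\psi\e_\a|)$, and its gradient is $-\mathrm{i}\suml_\a \psi\e_\a \chi'(x_\a)$ times the exponential, also $O(\maxl_\a|\psi\e_\a|)$; multiplying by $w$ and $\nabla w$, which are bounded on the compact set $\overline{\square}$, gives $\max|\hat w\e| + \max|\nabla\hat w\e| \to 0$.

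\textbf{Main obstacle.} The only genuinely delicate point is verifying \eqref{w_1} at the edges and corners of $\square$, i.e. at points lying on several faces $\mathbf{M}_\a$ simultaneously, where $w$ must satisfy several compatibility relations at once and where some coordinates $x_\b$ are fractional. Here the product structure of the phase factor $\prod_\a e^{-\mathrm{i}\psi\e_\a\chi(x_\a)}$ is essential: each translation $\gamma_{e_\a}$ shifts only $x_\a$ and fixes the others, so the $\a$-th factor transforms exactly as required while the remaining factors are unchanged, and the chain of relations defining $C^\infty_{\theta\e}(\square)$ closes consistently precisely because the original $w$ already satisfies the analogous chain for $C^\infty_\theta(\square)$. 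I would also note that one must check that $x \mapsto \chi(x_\a)$ extends smoothly across $\partial\square$ in the periodic sense — it does, since $\chi$ is constant near both endpoints, so $\chi(x_\a)$ and all its derivatives match trivially under the identification. Everything else is a routine estimate, so the lemma follows.
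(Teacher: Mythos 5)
Your proof is correct and follows essentially the same strategy as the paper's. Both you and the paper write $w+\hat w\e=F\e\,w$ where $F\e\in C^\infty(\square)$ is a multiplicative correction factor that splits as a product over the coordinates, satisfies $F\e(\gamma_{e_\a}x)=\big(\overline{\theta\e_\a}/\overline{\theta_\a}\big)F\e(x)$ for $x\in\mathbf{M}_\a$, and tends to $1$ in $C^1(\square)$; this immediately yields \eqref{w_1} and \eqref{w_2}. The only difference is the explicit choice of $F\e$: the paper's recursive construction unfolds to the affine-in-each-variable polynomial $\mathbf{1}\e(x)=\prod_{\a=1}^n\big[(\overline{\theta\e_\a}/\overline{\theta_\a}-1)x_\a+1\big]$, whereas you use the unit-modulus factor $\prod_{\a}e^{-\mathrm{i}\psi\e_\a\chi(x_\a)}$ built from a smooth cutoff $\chi$. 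As you correctly point out, the product structure over $\a$ is precisely what makes the compatibility conditions at edges and corners of $\square$ close up automatically, because each shift $\gamma_{e_\a}$ touches only the $\a$-th factor. One small point worth stating explicitly: the phases $\phi_\a,\phi\e_\a$ are defined only modulo $2\pi$, so the assertion $\psi\e_\a=\phi\e_\a-\phi_\a\to 0$ requires choosing the branches of the argument consistently; this is possible for $\eps$ small since $\theta\e\to\theta$ in $\mathbb{T}^n$, so it is bookkeeping rather than a gap.
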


\begin{proof} We define the function $\mathbf{1}\e\in
C^\infty(\mathbb{R}^n)$ by the following recurrent formulae:
\begin{gather*}
\mathbf{1}\e(x_1,\dots,x_{n})=A_n(x_1,\dots,x_{n-1}) x_{n}+B_n(x_1,\dots,x_{n-1}),\\
\a=2,\dots,n:\
\begin{cases}B_\a(x_1,\dots,x_{\a-1})=A_{\a-1}(x_1,\dots,x_{\a-2})
x_{\a-1}+B_{\a-1}(x_1,\dots,x_{\a-2}),\\
A_\a(x_1,\dots,x_{\a-1})=\big(\overline{\theta_\a\e}/{\overline{\theta_\a}}-1\big)B_\a(x_1,\dots,x_{\a-1}),\end{cases}\\
B_1=1,\ A_1=\overline{\theta_1\e}/{\overline{\theta_1}}-1.
\end{gather*}
It is easy to see that
$\maxl_{x\in\square}\left|\mathbf{1}\e(x)-1\right|+\maxl_{x\in\square}\left|\nabla
\mathbf{1}\e(x)\right|\underset{\eps\to 0}\to 0$ and
$\mathbf{1}^\eps\in C^\infty_{\theta\e/\theta}(\square)$, where
$\theta\e/\theta:=(\theta_1\e/\theta_1,\dots,\theta_n\e/\theta_n)$.
Then we set
$$\hat w\e=(\mathbf{1}\e-1)w$$
Obviously the function $\hat w\e$ satisfies the conditions
(\ref{w_1}), (\ref{w_2}). \textcolor{black}{The lemma is proved.}
\end{proof}

We continue the \textcolor{black}{proof of Theorem 2.1}. For an
arbitrary $w\e\in \mathrm{dom}(\bar\eta_{\M\e})$ we have
\begin{gather}\label{int_ineq}
\intl_{\M\e}\left((\nabla u\e,\nabla w\e)_\eps-\lambda\e u\e
w\e\right)dV\e=0
\end{gather}
where $(\nabla u\e,\nabla w\e)_\eps$ is the scalar product of the
vectors $\nabla u\e$ and $\nabla w\e$ with respect to the \textcolor{black}{metric}
$g\e$.

We substitute into (\ref{int_ineq}) the test function $w\e$ of a
special type. Namely, let $w$ be \textcolor{black}{an arbitrary}
function from $C^\infty_\theta(\square)$, $\hat w\e\in
C^\infty(\square)$ be the function satisfying (\ref{w_1}),
(\ref{w_2}). Let $w_j$, $j=1,\dots,m$ be arbitrary functions from
$C^\infty(\square)$. Let $\Phi(r)$ be a twice continuously
differentiable non-negative function equal to $1$ as $0\leq
r\leq{1/4}$ and equal to $0$ as $r\geq{1/2}$. We set
\begin{gather*}
\widehat\Phi_{ij}\e=\Phi\left({|x-x_{ij}\e|-d_j\e\over
d_{j}\e}\right),\quad
\Phi_{ij}\e=\Phi\left({|x-x_{ij}\e|-d_j\e\over \kappa\eps}\right)
\end{gather*}
Then we set $w\e=\mathbf{w}\e+\delta\e$, where
\begin{gather}\label{w_bf}
\mathbf{w}\e(\tilde x)=\begin{cases}w(x),&\tilde x\in
\Omega_\square\e\setminus\left(\cupl_{i\in \I}\cupl_{j=1}^m
R_{ij}\e\right)\\w(x)+\ds\left(w(x\e_{ij})-w(x)
\right)\widehat\Phi_{ij}\e(x)+\\\qquad+\ds
\big(w_j(x_{ij}\e)-w(x_{ij}\e)\big)\mathbf{v}_{ij}\e(x)\Phi_{ij}\e(x),&\tilde x\in  R_{ij}\e\\
w_j(x_{ij}\e)+\left(w(x_{ij}\e)-w_j(x_{ij}\e)\right)\left(1-\mathbf{v}_{ij}\e
(\tilde x)\right),&\tilde x\in B_{ij}
\end{cases}\\\notag
\delta\e(\tilde x)=\begin{cases}\hat w(x),&\tilde x\in
\Omega_\square\e\setminus\left(\cupl_{i\in\I}\cupl_{j=1}^m
R_{ij}\e\right)\\\hat w\e(x)+\ds\left(\hat w\e(x\e_{ij})-\hat
w\e(x) \right)\widehat\Phi_{ij}\e(x),&\tilde x\in  R_{ij}\e\\
\hat w\e(x_{ij}\e),&\tilde x\in B_{ij}\e
\end{cases}
\end{gather}
Here the function $\mathbf{v}_{ij}\e$ is defined by (\ref{v_bf}),
(\ref{v_exact}), (\ref{v_exact_c}). It follows from (\ref{w_1})
that $w\e\in \mathrm{dom}(\bar\eta_{\M\e})$.

Substituting this $w\e$ into (\ref{int_ineq}) and integrating by
parts we obtain
\begin{gather}\label{int_ineq_mod}
\ds\intl_{\M\e}\left(-u\e \Delta\mathbf{w}\e-\lambda\e u\e
w\e\right)dV\e+\ds\intl_{\partial
\M\e}\nu\left[\mathbf{w}\e\right]u\e dS\e+
\intl_{\M\e}\left((\nabla u\e,\nabla\delta\e)_\eps-\lambda\e u\e
\delta\e\right)dV\e=0
\end{gather}
where ${\nu}$ is the outward normal vector field on $\partial
\M\e$.

In view of (\ref{d_size})-(\ref{b_size}) and the Cauchy
inequality\textcolor{black}{, the last term} in (\ref{int_ineq_mod}) is estimated by\\
$C\|u\e\|_{H^1(\M\e)}\sqrt{\maxl_{x\in\square}\left|\hat{w}(x)\right|^2+
\maxl_{x\in\square}\left|\nabla \hat{w}(x)\right|^2}$ and tends to
zero as $\eps\to 0$ in view of (\ref{w_2}).

In view of (\ref{trace}) the second term tends to
$\ds\int_{\partial\square}\nu\left[w\right]u ds$ as $\eps\to 0$,
where ${\nu}$ is the outward normal vector field on $\partial
\square$, $ds$ is the density of the Lebesgue measure on
$\partial\square$.

Now let us investigate the first term. Firstly we study the
integrals over $\Omega_\square\e$. Integrating by parts we get
\begin{gather}
\notag
\left|\suml_{i\in\I}\suml_{j=1}^m\intl_{R_{ij}\e}-\Delta\left\{
\big(w(x_{ij}\e)-w(x)\big)\widehat\Phi_{ij}\e(x)\right\}u\e(x)dV\e\right|=\\\notag=
\left|\suml_{i\in\I}\suml_{j=1}^m\left(\intl_{R_{ij}\e\cup
D_{ij}\e}\bigg(\nabla\left\{
\big(w(x_{ij}\e)-w(x)\big)\widehat\Phi_{ij}\e(x)\right\},\nabla\e
\Pi\e u\e(x)\bigg)
dx-\intl_{D_{ij}\e}\Delta w \Pi\e u\e dx\right)\right|\leq\\
\label{estim1}\leq C(w)\cdot\|\Pi\e
u\e\|_{H^1(\square)}\cdot\sqrt{\suml_{i\in\I}\suml_{j=1}^m\left|D_{ij}\e\cup\mathrm{supp}
\left[\nabla\widehat\Phi\e_{ij}\right]\right|}\underset{\eps\to
0}\to 0
\end{gather}
\textcolor{black}{Hereafter by }$C(w)$ we denote a constant
depending only on $w$\textcolor{black}{.}

Let us prove that the function $\xi\e\in L_2(\square)$,
$$
\xi\e(x)=\begin{cases}\ds\suml_{i\in\I}\suml_{j=1}^m-\Delta\left\{
\big(w_{j}\e(x_{ij}\e)-w(x_{ij}\e)\big)\mathbf{v}\e_{ij}(x)\Phi_{ij}\e(x)\right\},&x\in
R\e_{ij}\\0,&x\in\square\setminus \cupl_{i\in\I}\cupl_{j=1}^m
R\e_{ij}
\end{cases}
$$ converges
weakly in $L_2(\square)$ to the function
$\ds\suml_{j=1}^m\sigma_j\rho_j(w-w_j)$. Indeed using the
properties of $\mathbf{v}_{ij}\e$
\begin{gather*}
x\in R_{ij}\e:\ \Delta \mathbf{v}_{ij}\e(x)=0,\quad |D^\a
\mathbf{v}_{ij}\e(x)|\leq \ds {C\eps^n |x-x_{ij}\e|^{2-n-|a|}},\
\alpha=0,1
\end{gather*}
and the enclosure
$\mathrm{supp}(D^\a\Phi_{j}\e)\subset\left\{x\in\Omega_\square\e:\
{\kappa\eps/4}\leq |x-x_{ij}\e|\leq{\kappa\eps/2}\right\}$
($\a\not=0$) we obtain
\begin{gather}
\label{lim4} \intl_{R_{ij}\e}\left|-\Delta\left\{
\big(w_j(x_{ij}\e)-w(x_{ij}\e)\big)\mathbf{v}_{ij}\e(
x)\Phi_{ij}\e(x)\right\}\right|^2dx<C(w)\eps^n
\end{gather}
Hence the norms $\|\xi\e\|_{L_2(\square)}$ are bounded uniformly
in $\eps$. Taking into account (\ref{lim4}) we obtain for an
arbitrary $f\in C^\infty(\square)$ (below $\nu\e$ is the normal
vector field on $\partial D_{ij}\e$ directed outward $R_{ij}\e$):
\begin{multline}\label{lim3}
 \suml_{i\in\I}\suml_{j=1}^m\intl_{R_{ij}\e}-\Delta\left\{
\big(w_j(x_{ij}\e)-w(x_{ij}\e)\big)\mathbf{v}_{ij}\e(
x)\Phi_{ij}\e(x)\right\}f(x)dV\e=\\= \suml_{i\in\I}\suml_{j=1}^m
f(x_{ij}\e)\big(w(x_{ij}\e)-w_j(x_{ij}\e)\big)\intl_{\partial
D_{ij}\e}\nu\e\left[{\mathbf{v}_{ij}\e}\right]dS\e+\bar o(1)=\\
= \suml_{i\in\I}\suml_{j=1}^m
f(x_{ij}\e)(w(x_{ij}\e)-w_j(x_{ij}\e))\sigma_j\rho_j\eps^n+\bar
o(1)\underset{\eps\to 0}\to
\suml_{j=1}^m\sigma_j\rho_j\intl_{\square}f(x)(w(x)-w_j(x))dx
\end{multline}
Here we have used the following computations (below
$r=|x-x_{ij}\e|$):
\begin{gather}\label{comput}
\intl_{\partial
D_{ij}\e}\nu\e\left[{\mathbf{v}_{ij}\e}\right]dS\e=-\left.{\partial
\mathbf{v}_{ij}\e\over\partial
r}\right|_{r=d_j\e}(d_j\e)^{n-1}\omega_{n-1}\sim{1\over
2}\omega_{n-1}(n-2)d_j^{n-2}\eps^n=\sigma_j\rho_j\eps^n,\ \eps\to
0
\end{gather}
Since $\overline{C^\infty(\square)}=L_2(\square)$\textcolor{black}{,
then} $\xi\e$ converges weakly in $L_2(\square)$ to
$\ds\suml_{j=1}^m\sigma_j\rho_j(w-w_j)$ as $\eps\to 0$.

Using this, (\ref{pi_a}), (\ref{pi_b}) and (\ref{estim1}) we
conclude that
\begin{gather}
\label{estim2} \liml_{\eps\to 0}\intl_{\Omega_\square\e}-\Delta
w\e u\e dV\e=\intl_\square \left(-\Delta
wu+\suml_{j=1}^m\sigma_j\rho_j(w-w_j)u\right)dx
\end{gather}

In the same way (using the estimate (\ref{v_bf_estim+})) one can
prove that
\begin{gather}
\label{estim3} \liml_{\eps\to 0}\intl_{\Omega_\square\e}\lambda\e
w\e u\e dV\e=\intl_\square \lambda wu dx
\end{gather}

Now, we investigate the behaviour of the integrals in
(\ref{int_ineq}) over $\cupl_{i,j}B_{ij}\e$. Using
(\ref{v_bf_estim}) (the last asymptotics), (\ref{comput}) and the
Poincar\'{e} inequality we get
\begin{multline}
\suml_{i\in\I}\suml_{j=1}^m\intl_{B_{ij}\e}
-\Delta\Big[\big(w(x_{ij}\e)-w_j(x_{ij}\e)\big)\big(1-\mathbf{v}_{ij}\e(\tilde
x)\big)\Big]u\e(\tilde x)dV\e=\\=\suml_{i\in\I}\suml_{j=1}^m
\langle
u\e\rangle_{B_{ij}\e}\big(w(x_{ij}\e)-w_j(x_{ij}\e)\big)\intl_{\partial
D_{ij}\e}-\nu\e\left[\mathbf{v}_{ij}\e\right]dS\e+\bar
o(1)=\\\label{estim4}=\suml_{j=1}^m\sigma_j\rho_j\intl_{\square}
\widehat{[w_j-w]}(x)\Pi_j\e u\e(x) dx+\bar o(1)\underset{\eps\to
0}\to \suml_{j=1}^m\sigma_j\rho_j\intl_{\square}
(w_j(x)-w(x))u_j(x)dx
\end{multline}
where $\widehat{[w_j-w]}\in L_2(\square)$ is a step function:
$\widehat{[w_j-w]}(x)=w_j(x_{ij}\e)-w(x_{ij}\e)$,
$x\in\square_i\e$, $i\in\I$; it is clear that $\widehat{[w_j-w]}$
converges to $w_j-w$ strongly in $L_2(\square)$ as $\eps\to 0$.

In a similar manner we obtain
\begin{gather}
\label{estim5} \liml_{\eps \to
0}\suml_{i\in\I}\suml_{j=1}^m\intl_{B_{ij}\e}\lambda\e w\e u\e
dV\e=\lambda\suml_{j=1}^m\rho_j\intl_\square w_j u_j dx
\end{gather}

\textcolor{black}{Thus,} from (\ref{estim2})-(\ref{estim5}) we
obtain that the functions
$u\in\mathrm{dom}(\bar\eta_\square^\theta)$, $u_j\in L_2(\square)$
($j=1,\dots,m$) satisfy the equality:
\begin{multline}
 \intl_{\square}\left[-\Delta wu+\suml_{j=1}^m\sigma_j\rho_j
u(w-w_j)+ \suml_{j=1}^m\sigma_j\rho_j
u_j(w_j-w)\right]dx+\ds\intl_{\partial\square}\nu\left[w\right]u
ds-\\\label{result}-\lambda
\intl_{\square}\left[uw+\suml_{j=1}^m\rho_j u_jw_j\right]dx=0
\end{multline}
for \textcolor{black}{arbitrary} $w\in C_\theta^\infty(\square)$,
$w_j\in C^\infty(\square)$ ($j=1,\dots,m$).

Substituting $w\equiv 0$, $w_j\equiv 0$, $j\not= k$ into
(\ref{result}) we obtain
\begin{gather}\label{v=u}
u_k={\sigma_k u\over\sigma_k-\lambda},\ k=1,\dots,m
\end{gather}
Then substituting into (\ref{result}) $w_j\equiv 0$ ($\forall j$),
integrating by parts and taking into account
(\ref{v=u})\textcolor{black}{, we} conclude that
$u\in\mathrm{dom}(\bar\eta_{\square}^\theta)$ satisfies the
equality
\begin{gather*}
\intl_{\square}\Big[(\nabla u,\nabla
w)-\lambda\mathcal{F}(\lambda)uw\Big]dx=0,\quad \forall w\in
C_\theta^\infty(\square)
\end{gather*}
where $\mathcal{F}(\lambda)$ is defined by (\ref{mu_eq}). Hence
$u\in\mathrm{dom}(\Delta_{\square}^\theta)$ and
$$-\Delta_{\square}^\theta u=\lambda\mathcal{F}(\lambda)u$$
In view of Lemma \ref{lm3} $u\not=0$. Then
$\lambda\mathcal{F}(\lambda)\in\sigma(-\Delta_{\mathbb{R}^n})$
and\textcolor{black}{, therefore,} due to (\ref{arg})
$\lambda\in\sigma(\mathcal{A})
\setminus\cupl_{j=1}^m\left\{\sigma_j\right\}$.

The fulfilment of \textcolor{black}{property (\ref{ah})} is proved.

\subsection{\label{ss23}Property (\ref{bh}) of Hausdorff convergence}
Let $\lambda\in\sigma(\mathcal{A})\cap[0,L]$,
$L\not\in\cupl_{j=1}^m\left\{\mu_j \right\}$. We have to prove
that there exists $\lambda\e\in\sigma(-\Delta_{M\e})\cap[0,L]$
such that $\lambda\e\underset{\eps\to 0}\to\lambda$.

At first we prove \textcolor{black}{property (\ref{bh})}
\textcolor{black}{for the case} $\lambda<L$.

We assume the opposite: the subsequence (still denoted by $\eps$)
and $\delta>0$ exist such that
\begin{gather}
\label{dist} \dist(\lambda,\sigma(-\Delta_{M\e}))>\delta.
\end{gather}

Since $\lambda\in\sigma(\mathcal{A})$\textcolor{black}{,  then} the
function
$F=\left(\begin{matrix}f\\f_1\\\dots\\f_m\end{matrix}\right)\in H$
exists such that
\begin{gather}\label{notinim}
F\notin \mathrm{im}(\mathcal{A}-\lambda\mathrm{I}),\text{ where
}\mathrm{I}\text{ is the \textcolor{black}{identity operator}}\end{gather}

Let $f\e(\tilde x)\in L_2(M\e)$ be defined by the formula
\begin{gather*}
f\e(\tilde x)=\begin{cases} f(x),&\ \tilde x\in\Omega\e,\\\ds
\langle f_j\rangle_{\square_i\e},&\ \tilde x\in B_{ij}\e.
\end{cases}
\end{gather*}
It follows from \textcolor{black}{the Cauchy inequality} and
(\ref{b_size}) that the norms $\|f\e\|_{L_2(M\e)}$ are bounded
uniformly in $\eps$.

\textcolor{black}{Inequality} (\ref{dist}) implies that
$\lambda\in\mathbb{R}\setminus\sigma({-\Delta_{M\e}})$. Then
$\mathrm{im}(-\Delta_{M\e}-\lambda\mathrm{I})=L_2(M\e)$ and
thus\textcolor{black}{, the unique $u\e\in
\mathrm{dom}(\Delta_{M\e})$ exists satisfying}
\begin{gather}
\label{bvp1} -\Delta_{M\e} u\e-\lambda u\e=f\e
\end{gather}
In consequence of (\ref{dist}) $u\e$ satisfies the inequality
\begin{gather*}
\|u\e\|_{L_2(M\e)}\leq \delta^{-1}{\|f\e\|_{L_2(M\e)}}\leq C
\end{gather*}
Furthermore
\begin{gather*}
\|\nabla u\e\|^2_{L_2(M\e)}\leq \|f\e\|_{L_2(M\e)}\cdot
\|u\e\|_{L_2(M\e)}+|\lambda|\cdot \|u\e\|_{L_2(M\e)}^2\leq C
\end{gather*}

Then there exists a subsequence (still denoted by $\eps$) such
that
\begin{gather*}
\Pi\e u\e\rightarrow u\in H^1(\mathbb{R}^n)\text{ weakly in }
H^1(\mathbb{R}^n)\text{ and strongly in }L_2(G)\text{ for any compact set }G\subset\mathbb{R}^n\\
\Pi_j\e u\e\rightarrow u_j\in L_2(\mathbb{R}^n)\text{ weakly in }
L_2(\mathbb{R}^n)\ (j=1,\dots,m)
\end{gather*}
where $\Pi\e$, $\Pi_j\e$ ($j=1,\dots,m$) are the extension
operators introduced in the previous subsection.

For an arbitrary function $\mathbf{w}\e\in C^\infty_0(M\e)$ we
have
\begin{gather}\label{int_ineq_f}
\intl_{M\e}\left((\nabla\e u\e,\nabla\e \mathbf{w}\e)_\eps
-\lambda u\e \mathbf{w}\e-f\e \mathbf{w}\e\right) dV\e=0
\end{gather}
Let $w\in C_0^\infty(\mathbb{R}^n)$, $w_j\in
C_0^\infty(\mathbb{R}^n)$ ($j=1,\dots,m$) be
\textcolor{black}{arbitrary} functions. Using them we construct the
test-function $\mathbf{w}\e$ by \textcolor{black}{formula}
(\ref{w_bf}) (but with $\mathbb{R}^n$ instead of
$\Omega_\square\e$ and with $\mathbb{Z}^n$ instead of $\I$) and
substitute it into (\ref{int_ineq_f}). \textcolor{black}{Performing}
the same calculations as in the previous subsection we obtain
\begin{multline}
\intl_{\mathbb{R}^n}\left[(\nabla u,\nabla
w)+\suml_{j=1}^m\sigma_j\rho_ju(w-w_j)+\suml_{j=1}^m\sigma_j\rho_ju_j(w_j-w)-\right.\\\label{int_ineq_final}\left.-\lambda\left(uw+\suml_{j=1}^m
\rho_ju_j w_j\right)-\left(fw+\suml_{j=1}^m \rho_jf_j
w_j\right)\right]dx=0
\end{multline}
for \textcolor{black}{arbitrary} $w\in C_0^\infty(\mathbb{R}^n)$,
$w_j\in C_0^\infty(\mathbb{R}^n)$ ($j=1,\dots,m$). It follows from
(\ref{int_ineq_final}) that
\begin{gather*}
U=\left(\begin{matrix}u\\u_1\\\dots\\u_m\end{matrix}\right)\in\mathrm{dom}
(\mathcal{A})\text{\quad and\quad }\mathcal{A}U-\lambda U=F
\end{gather*}
We obtain a contradiction with (\ref{notinim}). Then there is
$\lambda\e\in\sigma(-\Delta_{M\e})$ such that $\liml_{\eps\to
0}\lambda\e=\lambda$. Since $\lambda<L$\textcolor{black}{, then}
$\lambda\e<L$ \textcolor{black}{when $\eps$ is small enough}.

\textcolor{black}{Finally,} we verify the fulfilment of
\textcolor{black}{property (\ref{bh})} for the case $\lambda=L$.
Since
$L\notin\cupl_{j=1}^m\left\{\mu_j\e\right\}$\textcolor{black}{,
then} (\ref{struct}) implies that $(L-\delta,L-\delta/2)\subset
\sigma(\mathcal{A})$ when $\delta$ is small enough. Let
$\lambda_\delta\in (L-\delta,L-\delta/2)$. \textcolor{black}{We have
just proved} that if $\eps<\eps(\delta)$ then
$\lambda^{\eps}\in\sigma(-\Delta_{M\e})$ exists such that
$|\lambda^{\eps}-\lambda_\delta|<\delta/2$. Then
$\lambda\e\in(L-3\delta/2,L)$ as $\eps<\eps(\delta)$ that
obviously implies the fulfilment of \textcolor{black}{property
(\ref{bh})}.

\subsection{\label{ss24}End of the proof}

\textcolor{black}{In the proof of} the Hausdorff convergence
\textcolor{black}{we used} the fact that $M\e$ is $\Gamma$-periodic
manifold, $\M\e$ is a period cell. Now let us recall that $M\e$ is
also $\Gamma\e$-periodic manifold, $\M_{i}\e$ is a corresponding
period cell ($i$ is arbitrary, \textcolor{black}{so from now on }we
consider $i=0$). Then
\begin{gather*}
\sigma(-\Delta_{M\e})=\cupl_{k=1}^\infty [a_k\e,b_k\e]
\end{gather*}
where $[a_k\e,b_k\e]=\left\{\lambda_k^\theta(\M_0\e),\
\theta\in\mathbb{T}^n\right\}$

\begin{lemma}\label{lm4}$\liml_{\eps\to 0}b_{m+1}\e=\infty$
\end{lemma}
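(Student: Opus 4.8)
\textbf{Proof plan for Lemma \ref{lm4}.} Recall that $b_{m+1}\e=\maxl_{\theta\in\mathbb{T}^n}\lambda_{m+1}^\theta(\M_0\e)$, so it suffices to exhibit one character $\theta_0\in\mathbb{T}^n$ with $\lambda_{m+1}^{\theta_0}(\M_0\e)\underset{\eps\to 0}\to\infty$. The plan is to take $\theta_0=(-1,1,\dots,1)$, i.e. the boundary conditions on $\M_0\e$ antiperiodic in the first coordinate and periodic in the others, and to argue by contradiction. Suppose that along a subsequence (still denoted $\eps$) one has $\lambda_{m+1}^{\theta_0}(\M_0\e)\leq\Lambda<\infty$. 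By the min-max principle there is an $(m+1)$-dimensional subspace $V\e\subset\mathrm{dom}(\bar\eta^{\theta_0}_{\M_0\e})$ with $\|\nabla u\|^2_{L_2(\M_0\e)}\leq\Lambda\|u\|^2_{L_2(\M_0\e)}$ for all $u\in V\e$; fix an $L_2(\M_0\e)$-orthonormal basis $u_1\e,\dots,u_{m+1}\e$ of $V\e$, so that $\|\nabla u_k\e\|^2_{L_2(\M_0\e)}\leq\Lambda$ for each $k$.

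The crux is the following $\eps$-uniform Poincar\'{e} inequality on the perforated period-cube: there is $C$ independent of $\eps$ such that $\|u\|^2_{L_2(F_0\e)}\leq C\eps^2\|\nabla u\|^2_{L_2(F_0\e)}$ for every $u\in\mathrm{dom}(\bar\eta^{\theta_0}_{\M_0\e})$. After the homothety $x\mapsto\eps^{-1}x$ the set $F_0\e$ becomes the unit cube with $m$ holes of radii $d_j\e/\eps\to 0$, and the inequality is equivalent to the $\eps$-uniform bound $\|v\|^2_{L_2}\leq C\|\nabla v\|^2_{L_2}$ for functions $v$ satisfying the antiperiodicity condition (in the first variable) on the faces of the unit cube. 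This in turn follows by a standard compactness argument: extending such $v$ across the shrinking holes by the operator of \citep[Chapter 4]{March} (which controls the gradient uniformly in $\eps$) reduces the claim to the cube without holes, where a sequence with $\|v\|_{L_2}=1$ and $\|\nabla v\|_{L_2}\to 0$ would converge in $L_2$ to a constant --- impossible, since antiperiodicity forces that constant to vanish. Applying the inequality to $u=u_k\e$ gives $\|u_k\e\|^2_{L_2(F_0\e)}\leq C\Lambda\eps^2\to 0$, so the $L_2$-mass of each $u_k\e$ is carried, in the limit, entirely by the bubbles.

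On each bubble $B_{0j}\e$, whose diameter is $O(\eps)$, the Poincar\'{e} inequality gives $\|u_k\e-\langle u_k\e\rangle_{B_{0j}\e}\|^2_{L_2(B_{0j}\e)}\leq C\eps^2\|\nabla u_k\e\|^2_{L_2(B_{0j}\e)}\to 0$. Hence, setting $c_{k,j}\e=\langle u_k\e\rangle_{B_{0j}\e}$ and $v_k\e=\suml_{j=1}^m c_{k,j}\e\chi_{B_{0j}\e}$, one gets $\|u_k\e-v_k\e\|_{L_2(\M_0\e)}\to 0$, and therefore $\langle v_k\e,v_l\e\rangle_{L_2(\M_0\e)}=\suml_{j=1}^m|B_{0j}\e|\,c_{k,j}\e\overline{c_{l,j}\e}\to\delta_{kl}$. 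Writing $\tilde c_{k,j}\e=|B_{0j}\e|^{1/2}c_{k,j}\e$, the $m+1$ vectors $\tilde c_k\e=(\tilde c_{k,1}\e,\dots,\tilde c_{k,m}\e)\in\mathbb{C}^m$ then have Gram matrix converging to the $(m+1)\times(m+1)$ identity matrix; this is impossible, because the Gram matrix of $m+1$ vectors in $\mathbb{C}^m$ has rank at most $m$ and hence determinant $0$, whereas $\det\mathrm{I}_{m+1}=1$. This contradiction proves $\lambda_{m+1}^{\theta_0}(\M_0\e)\to\infty$, whence $b_{m+1}\e\to\infty$.

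The only non-routine ingredient is the $\eps$-uniform Poincar\'{e} inequality on the perforated cube with antiperiodic boundary conditions; everything else is soft linear algebra together with standard Poincar\'{e} estimates. For $n=2$ nothing changes: the holes are then even smaller, $d_j\e/\eps=\eps^{-1}\exp\left(-1/(d_j\eps^2)\right)\to 0$, so the same extension/compactness argument applies verbatim.
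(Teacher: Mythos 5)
Your proof is correct, but it takes a genuinely different route from the paper. The paper first shows, via the $\eps^{-1}$-homothety and the abstract operator-convergence scheme of \citep{IOS} (as in Lemma~\ref{lm0+}), that $\lambda^N_{m+2}(\M_0\e)\to\infty$; combined with $\lambda_k^N\le\lambda_k^\theta$ this gives $a_{m+2}\e\to\infty$. To conclude that $b_{m+1}\e$ cannot stay bounded, the paper then invokes property~(\ref{bh}) of the Hausdorff convergence already established in Subsection~\ref{ss23}: a bounded $b_{m+1}\e$ together with $a_{m+2}\e\to\infty$ would open a large $\eps$-independent gap inside $[\mu_m,\infty)\subset\sigma(\mathcal{A})$, contradicting~(\ref{bh}). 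Your argument is more self-contained: by fixing an antiperiodic character $\theta_0$ you kill the constant mode on the cube face directly, and the $\eps$-uniform Poincar\'e inequality on the perforated cell (via the gradient-controlled extension) forces any low-energy function to concentrate on the $m$ bubbles, each of which carries essentially one constant; the rank bound on the Gram matrix of $m+1$ vectors in $\mathbb{C}^m$ then yields the contradiction. What your route buys is independence from Subsection~\ref{ss23} (the hardest piece of the whole proof), which makes Lemma~\ref{lm4} logically local; what it costs is a slightly stronger property of the extension operator than the paper records in~(\ref{pi_b}) --- you need $\|\nabla(\Pi\e u)\|_{L_2}\le C\|\nabla u\|_{L_2}$ with $C$ independent of $\eps$, not just the $H^1$-to-$H^1$ bound, and you also need the extension to be local near the holes so that antiperiodicity on the outer faces survives. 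Both facts are standard for the Marchenko--Khruslov extension (and become easier after rescaling, since the rescaled hole radii $d_j\e/\eps\to 0$ are well-separated from $\partial\square$ by the fixed $\kappa$), but it would strengthen your write-up to state them explicitly rather than cite~(\ref{pi_b}) as written. Incidentally, a hybrid is also available: run the paper's homothety argument with $\theta_0$-boundary conditions instead of Neumann; the limit operator then has $\lambda_1^{\theta_0}(\square)>0$, so $\lambda_{m+1}^{\theta_0}(\M_0\e)\sim\eps^{-2}\lambda_{m+1}\to\infty$ directly, again bypassing property~(\ref{bh}).
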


\begin{proof}  As usual by $\lambda_k^N(\M_0\e)$ we denote the
$k$-th eigenvalue of the operator $-\Delta_{\M_0\e}^N$, which is
the Laplace-Beltrami operator on $\M_0\e$ with Neumann boundary
conditions.

Using the same idea as in the proof of Lemma \ref{lm0+} (i.e.
$\eps^{-1}$-homothetic image of $\M_0\e$)\textcolor{black}{, we
get}
\begin{gather}\label{relationship1}
\liml_{\eps\to 0}\eps^2\lambda_k(\M_0\e)=\lambda_k,\ k=1,2,3...
\end{gather}
where $\left\{\lambda_k\right\}_{k\in\mathbb{N}}$ are the
eigenvalues of the operator $\mathcal{L}$ which acts in the space
$L_2(\square)\underset{j=\overline{1,m}}\oplus L_2(\mathbf{B}_j)$
and is defined by the operation
\begin{gather*}
\mathcal{L}=-\left(\begin{matrix}\Delta^N_{\square}&0&...&0\\
0&\Delta_{\mathbf{B}_1}&...&0\\...\\0&0&...&\Delta_{\mathbf{B}_m}\end{matrix}\right)
\end{gather*}
Recall that $\square$ is the unit cube in $\mathbb{R}^n$,
$\mathbf{B}_j$ is the $n$-dimensional sphere of the radius $b_j$
($j=1,\dots,m$).

One has $\lambda_j=\lambda_1(\mathbf{B}_j)=0$, $j=1,\dots,m$,
$\lambda_{m+1}=\lambda_1^N(\square)=0$, and
$$\lambda_{m+2}=\minl\left\{\lambda^N_2(\square),\lambda_2(\mathbf{B}_j),\
j=1,\dots,m\right\}>0$$ \textcolor{black}{Thus,} in view of
(\ref{relationship1}) $\liml_{\eps\to
0}\lambda_{m+2}^N(\M_0\e)=\infty$. Due to
\textcolor{black}{inequality} (\ref{enclosure})
$\lambda_{m+2}^N(\M_0\e)\leq a_{m+2}\e$. \textcolor{black}{Thus,}
$\liml_{\eps\to 0}a_{m+2}\e=\infty$.

Suppose that there exists a subsequence (still denoted by $\eps$)
such that \textcolor{black}{the numbers $b_{m+1}\e$ are bounded}
uniformly in $\eps$. Let $L>\maxl_{j=\overline{1,m}}{\mu_j}$ and
$L>b_{m+1}\e$. Let $L_1>L$. Since $a_{m+2}\e\underset{\eps\to
0}\to \infty$\textcolor{black}{, then} $a_{m+2}\e>L_1$
\textcolor{black}{when $\eps$ is small enough}. Hence
$\sigma(-\Delta_{M\e})\cap [L,L_1]=\varnothing$
\textcolor{black}{when $\eps$ is small enough}. But this contradicts
to \textcolor{black}{property (\ref{bh})} of the Hausdorff
convergence. Hence $b_{m+1}\e\underset{\eps\to 0}\to \infty$.
\textcolor{black}{The lemma is proved.}
\end{proof}

It follows from Lemma \ref{lm4} that \textcolor{black}{within an
arbitrary finite interval $[0,L]$ the spectrum
$\sigma(-\Delta_{M\e})$ has at most $m$ gaps when $\eps$ is small
enough, i.e.}
\begin{gather}
\sigma(-\Delta_{M\e})\cap[0,L]=[0,L]\setminus
\cupl_{j=1}^{m\e}(\sigma_j\e,\mu_j\e)
\end{gather}
where $(\sigma_j\e,\mu_j\e)\subset[0,L]$ are some pairwise
disjoint intervals, $m\e\leq m$. Here we renumber the intervals in
the increasing order.

Let $L>\maxl_{j=\overline{1,m}}{\mu_j}$ be arbitrarily large
number. We have just proved that as $\eps\to 0$ the set
$\sigma(-\Delta_{M\e})\cap [0,L]$ converges to the set
$\sigma(\mathcal{A})\cap [0,L]=[0,L]
\setminus\left(\cupl_{j=1}^m(\sigma_j,\mu_j)\right)$ in the
Hausdorff sense. Then by Proposition \ref{prop1} $m\e=m$
\textcolor{black}{when $\eps$ is small enough} and
\begin{gather*}
\forall j=1,\dots,m:\quad \lim_{\eps\to
0}\sigma_j\e=\sigma_j,\quad \lim_{\eps\to 0}\mu_j\e=\mu_j
\end{gather*}

\textcolor{black}{Finally,} we denote by $\mathcal{J}\e$ the union
of the remaining gaps (if any). Since $b_{m+1}\e\underset{\eps\to
0}\to \infty$ and $b_{m+1}\e\leq
\inf\mathcal{J}\e$\textcolor{black}{, then} $$ \inf\mathcal{J}\e>L
$$ \textcolor{black}{when $\eps$ is small enough}. \textcolor{black}{This concludes
the proof of Theorem \ref{th2}}.

\begin{remark}\label{rem_strong}
\textcolor{black}{Actually, we have proved a slightly strong
result}: $\liml_{\eps\to 0}a_{k+1}\e= \mu_{k}$, $\liml_{\eps\to
0}b_{k}\e=\sigma_{k}$, $k=1,\dots,m$, $\liml_{\eps\to
0}b_{m+1}\e=\infty$, i.e. the first $m$ gaps of the spectrum
$\sigma(-\Delta_{M\e})$ ($\eps$ is small enough) are located
exactly between the first $(m+1)$ bands.
\end{remark}

\section{\label{sec4}End of the proof of Theorem \ref{th1}: choice of the constants $d_j$, $b_j$ and conclusive remarks}
In order to complete the proof of Theorem \ref{th1}, we have to
choose \textcolor{black}{the constants $d_j$, $b_j$ in
(\ref{d_size}), (\ref{b_size}) such that equalities (\ref{ab})
hold.}

\begin{theorem}\label{th3}
Let $(\a_j,\b_j)$ ($j={1,\dots,m},\ m\in \mathbb{N} $) be
arbitrary intervals satisfying (\ref{intervals+}). Let $M\e$
($\eps>0$) be an $n$-dimensional periodic Riemannian manifolds of
the form (\ref{manifold}).

Then (\ref{ab}) holds if we choose
\begin{gather}\label{exact_formula1}
d_j=\begin{cases}\left[\ds{2(\b_j-\a_j)\over
\omega_{n-1}(n-2)}\prod\limits_{i=\overline{1,m}|i\not=
j}\ds\left({\b_i-\a_j\over  \a_i-\a_j}\right)\right]^{1\over
n-2},&n> 2\\\ds{(\b_j-\a_j)\over
\pi}\prod\limits_{i=\overline{1,m}|i\not=
j}\ds\left({\b_i-\a_j\over
\a_i-\a_j}\right),&n=2\end{cases}\\\label{exact_formula2}b_j=\left[\ds{{\b_j-\a_j\over
\omega_n\a_j}\prod\limits_{i=\overline{1,m}|i\not=
j}\ds\left({\b_i-\a_j\over  \a_i-\a_j}\right)}\right]^{1\over n}
\end{gather}
\end{theorem}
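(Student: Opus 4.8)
The plan is to read off (\ref{exact_formula1})--(\ref{exact_formula2}) by turning the two requirements in (\ref{ab}) into explicit relations between $d_j$, $b_j$ and the preassigned endpoints. Imposing $\sigma_j=\a_j$ in (\ref{sigma}) gives, for each $j$, a single relation between $d_j$ and $b_j$, leaving one free parameter per index; this freedom is then used up by imposing $\mu_j=\b_j$. Concretely I would proceed in four steps: (i) with $\sigma_j=\a_j$, the numbers $\sigma_j$ entering $\mathcal{F}$ in (\ref{mu_eq}) become the prescribed $\a_j$; (ii) determine the weights $\rho_j=(b_j)^n\omega_n$ from (\ref{rho}) — hence $b_j$ — so that the zeros of $\mathcal{F}$ are exactly $\b_1,\dots,\b_m$; (iii) recover $d_j$ from $b_j$ via (\ref{sigma}); (iv) check that the constants so obtained are admissible, so that Theorem~\ref{th2} applies and (\ref{ab}) holds.

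Step (ii) is the substantive one. Recall from Subsection~\ref{ss21} that $\mu_1<\dots<\mu_m$ are precisely the roots of $\mathcal{F}(\lambda)=1+\suml_{j=1}^m\ds{\sigma_j\rho_j\over\sigma_j-\lambda}$, that there are exactly $m$ of them, and that $\mu_j\in(\sigma_j,\sigma_{j+1})$ for $j<m$ and $\mu_m\in(\sigma_m,\infty)$. Clearing denominators, $\mathcal{F}(\lambda)\prod_{j}(\sigma_j-\lambda)$ is a polynomial of degree $m$ with leading coefficient $(-1)^m$ (since $\mathcal{F}\to1$ as $\lambda\to\infty$); with $\sigma_j=\a_j$, requiring its roots to be $\b_1,\dots,\b_m$ is therefore equivalent to the rational identity
\begin{gather*}
1+\suml_{j=1}^m{\a_j\rho_j\over\a_j-\lambda}={\prod\limits_{l=1}^m(\b_l-\lambda)\over\prod\limits_{j=1}^m(\a_j-\lambda)}.
\end{gather*}
Both sides equal $1$ at infinity and have only simple poles, all at the $\a_j$, so they coincide iff their residues at each $\a_j$ agree; this Lagrange-interpolation comparison gives $\a_j\rho_j=(\b_j-\a_j)\prod\limits_{i=\overline{1,m}|i\not= j}\ds{\b_i-\a_j\over\a_i-\a_j}$ for every $j$, and since $\rho_j=(b_j)^n\omega_n$ this is precisely (\ref{exact_formula2}). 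Conversely, once $b_j$ is chosen by (\ref{exact_formula2}) the residue identity holds, so $\mathcal{F}$ equals the displayed rational function; its zeros are the $\b_l$, and since (\ref{intervals+}) gives $\a_j<\b_j<\a_{j+1}$ (with $\a_m<\b_m$), each $\b_j$ lies in the interval containing the $j$-th root of $\mathcal{F}$, whence $\mu_j=\b_j$.

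For step (iii) I would substitute $\rho_j=(b_j)^n\omega_n$ and the value of $\a_j\rho_j$ just found into (\ref{sigma}). For $n>2$, (\ref{sigma}) reads $\sigma_j=\ds{n-2\over 2}\cdot\ds{d_j^{n-2}\omega_{n-1}\over\rho_j}$, so $\sigma_j=\a_j$ is equivalent to $d_j^{n-2}=\ds{2\a_j\rho_j\over(n-2)\omega_{n-1}}$, which on inserting $\a_j\rho_j$ becomes (\ref{exact_formula1}) for $n>2$. For $n=2$, (\ref{sigma}) reads $\sigma_j=\ds{d_j\over 4(b_j)^2}=\ds{d_j\omega_2\over 4\rho_j}$; using $\omega_2=4\pi$ (the area of the unit $2$-sphere), $\sigma_j=\a_j$ becomes $d_j=\ds{\a_j\rho_j\over\pi}$, i.e. (\ref{exact_formula1}) for $n=2$. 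Step (iv) is then immediate: by (\ref{intervals+}), $\b_j-\a_j>0$ and, for $i\not= j$, the numbers $\b_i-\a_j$ and $\a_i-\a_j$ have the same sign, so all products in (\ref{exact_formula1})--(\ref{exact_formula2}) are positive and thus $d_j>0$, $b_j>0$; moreover the $\sigma_j=\a_j$ are pairwise distinct by (\ref{intervals+}), and by (\ref{d_size})--(\ref{b_size}) one has $d_j\e\ll b_j\e$ as $\eps\to 0$, so $M\e$ is well defined.

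I do not anticipate a real obstacle: the one thing requiring care is the bookkeeping with the constant $\omega_n$ — fixing it as the $n$-volume $|S^n|$ of the unit $n$-sphere, so that $\omega_2=4\pi$ and the factors $2/((n-2)\omega_{n-1})$ and $1/(\omega_n\a_j)$ reproduce exactly the coefficients in (\ref{exact_formula1})--(\ref{exact_formula2}) — together with keeping the cases $n>2$ and $n=2$ separate. Once the partial-fraction identity for $\mathcal{F}$ in step (ii) is in place, the rest is routine algebra.
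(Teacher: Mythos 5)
Your argument is correct, and it is a genuinely different (and arguably slicker) route to the key identity than the one in the paper. Where the paper establishes $\mu_j=\b_j$ by writing out the conditions $\mathcal{F}(\b_k)=0$ as the linear system $\suml_{j=1}^m\ds{\a_j\rho_j\over\b_k-\a_j}=1$ ($k=1,\dots,m$) in the unknowns $\rho_j$ and then solving that system by induction on $m$ (Lemma \ref{lm1}), you instead observe that $\mathcal{F}(\lambda)\prod_j(\a_j-\lambda)$ is a degree-$m$ polynomial with leading coefficient $(-1)^m$, so $\mathcal{F}$ is determined by its poles, its value at infinity, and its residues, and you pin down the $\rho_j$ in one stroke via the partial-fraction comparison with $\ds\prod_l(\b_l-\lambda)\big/\prod_j(\a_j-\lambda)$. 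This replaces the induction by a residue/Lagrange-interpolation computation, and it makes the \emph{sufficiency} direction transparent as well: once the residues match, the two rational functions are identical, so the zeros of $\mathcal{F}$ are exactly $\b_1,\dots,\b_m$, and the interlacing from (\ref{intervals+}) slots each $\b_j$ into the correct interval so that $\mu_j=\b_j$. The recovery of $d_j$ from $\sigma_j=\a_j$ via (\ref{sigma}) and $\rho_j=(b_j)^n\omega_n$ and the positivity check at the end match the paper. The only small caution (which you handle implicitly through (\ref{intervals+})) is that the residue argument needs the $\a_j$ to be pairwise distinct and $\b_i\neq\a_j$ for all $i,j$, so there is no pole cancellation; both hold here.
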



\begin{remark} Since the intervals $(\a_j,\b_j)$ satisfy
(\ref{intervals+})\textcolor{black}{, then}
\begin{gather*}
\forall j:\ \a_j<\b_j,\quad \forall i\not= j:\
\mathrm{sign}(\b_i-\a_j)=\mathrm{sign}(\a_i-\a_j)\not= 0
\end{gather*}
\textcolor{black}{Therefore,} the expressions
$(\b_j-\a_j)\prod\limits_{i=\overline{1,m}|i\not=
j}\ds\left({\b_i-\a_j\over  \a_i-\a_j}\right),\ j=1,\dots,m$ are
positive and thus the choice of $d_j$ and $b_j$ is
correct.\end{remark}

\begin{proof} Substituting $d_j$, $b_j$ (\ref{exact_formula1}),
(\ref{exact_formula2}) into (\ref{sigma}) we get
$$\sigma_j=\alpha_j$$ i.e. the first equality in (\ref{ab}) holds.
 Furthermore substituting $b_j$
(\ref{exact_formula2}) into (\ref{rho}) we obtain
\begin{gather}\label{prod}
\rho_j={\b_j-\a_j\over\a_j}\prod\limits_{i=\overline{1,m}|i\not=
j}\ds\left({\b_i-\a_j\over  \a_i-\a_j}\right)
\end{gather}

It remains to prove that $\mu_j=\b_j$. Recall that the numbers
$\mu_j$ ($j=1,\dots,m$) are the roots of the equation
(\ref{mu_eq}). \textcolor{black}{Therefore,} in order to prove the
equality $\mu_j=\b_j$\textcolor{black}{, we have} to show that
\begin{gather}\label{system}
\forall k=1,\dots ,m:\ \suml_{j=1}^m{\a_j\rho_j\over \b_k-\a_j}=1
\end{gather}

Let us consider (\ref{system}) as the linear algebraic system of
$m$ equations with unknowns $\rho_j$ ($j=1,\dots,m$). In order to
end the proof of theorem we have to prove the following
\begin{lemma}\label{lm1}
The system (\ref{system}) has the unique solution
$\rho_1,\dots,\rho_m$ which is defined by (\ref{prod}).
\end{lemma}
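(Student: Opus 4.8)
The plan is to recognize (\ref{system}) as a linear system whose matrix is a column-scaling of a Cauchy matrix, and to solve it by a partial-fraction / polynomial-interpolation argument rather than by inverting the matrix directly. First I would introduce the auxiliary polynomial
\begin{gather*}
g(\lambda)=\prod\limits_{j=1}^m(\a_j-\lambda)+\suml_{j=1}^m\a_j\rho_j\prod\limits_{i=\overline{1,m}|i\not=j}(\a_i-\lambda)-\prod\limits_{k=1}^m(\b_k-\lambda),
\end{gather*}
and observe that the leading $(-1)^m\lambda^m$ terms of the first and the last products cancel, so that $\deg g\le m-1$. Here one uses that, by (\ref{intervals+}), the numbers $\a_1,\dots,\a_m$ are pairwise distinct and strictly positive, the numbers $\b_1,\dots,\b_m$ are pairwise distinct, and $\b_k\not=\a_j$ for all $k,j$; in particular $\prod\limits_{j=1}^m(\a_j-\b_k)\not=0$ for each $k$.

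For the uniqueness part I would assume that $(\rho_j)_{j=1}^m$ solves (\ref{system}), rewrite (\ref{system}) equivalently as $\suml_{j}\a_j\rho_j/(\a_j-\b_k)=-1$, and evaluate $g$ at $\lambda=\b_k$. Since the last product vanishes there and the middle sum factors as $\big(\prod_i(\a_i-\b_k)\big)\suml_j\a_j\rho_j/(\a_j-\b_k)$, one gets $g(\b_k)=0$ for $k=1,\dots,m$. A polynomial of degree at most $m-1$ with $m$ distinct roots is identically zero, so $g\equiv0$. Then substituting $\lambda=\a_l$ into the identity $g\equiv0$ kills every summand except the $j=l$ term of the middle sum and yields $\a_l\rho_l\prod_{i\not=l}(\a_i-\a_l)=\prod_{k}(\b_k-\a_l)$, which, after factoring out the $k=l$ term, is exactly the value (\ref{prod}). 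This shows the system has at most one solution and pins it down to (\ref{prod}).

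For the existence part I would run the same computation in reverse: taking $\rho_l$ as in (\ref{prod}) makes $g(\a_l)=0$ for every $l$, so $\deg g\le m-1$ together with the $m$ distinct roots $\a_1,\dots,\a_m$ again forces $g\equiv0$; evaluating $g\equiv0$ at $\lambda=\b_k$ and dividing by $\prod_j(\a_j-\b_k)\ne0$ recovers (\ref{system}). Combining the two parts yields that (\ref{prod}) is the unique solution of (\ref{system}); together with the identity $\sigma_j=\a_j$ and formula (\ref{prod}) for $\rho_j$ obtained above, this completes the proof of Theorem \ref{th3} and hence of Theorem \ref{th1}.

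I do not expect a genuine obstacle here. The only point that needs care is the degree count $\deg g\le m-1$: it is precisely the cancellation of the top-degree terms that converts "$g$ has $m$ roots" into "$g\equiv0$". The other thing to check explicitly is that the hypotheses (\ref{intervals+}) really do guarantee $\b_k\not=\a_j$ and $\a_j>0$, so that all the divisions above are legitimate (equivalently, that the Cauchy-type coefficient matrix is nonsingular).
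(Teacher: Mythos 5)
Your proof is correct, and it takes a genuinely different route from the paper's. The paper proceeds by induction on $m$: it multiplies the $k$-th equation of (\ref{system}) by $\b_k-\a_N$, subtracts the $N$-th equation from the first $N-1$, observes that the $\rho_N$-terms cancel so the reduced system is again of type (\ref{system}) in the rescaled unknowns $\hat\rho_j=\rho_j(\a_N-\a_j)/(\b_N-\a_j)$ with $m=N-1$, applies the inductive hypothesis, and then recovers $\rho_N$ ``by the symmetry of the system.'' Your argument instead packages both the system and the candidate formula into the single polynomial identity $g\equiv 0$: the degree bound $\deg g\le m-1$ (from cancellation of the leading $(-1)^m\lambda^m$ terms) converts ``$g$ vanishes at the $m$ points $\b_k$'' (resp.\ ``at the $m$ points $\a_l$'') into $g\equiv 0$, and evaluating $g\equiv 0$ at $\lambda=\a_l$ (resp.\ $\lambda=\b_k$) extracts exactly (\ref{prod}) (resp.\ recovers (\ref{system})). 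Your version has two advantages: it treats existence and uniqueness on an equal footing, whereas the paper proves uniqueness and leaves existence implicit (relying tacitly on the invertibility of the Cauchy-type matrix), and it makes the divisibility hypotheses $\a_j\ne\a_i$, $\a_j\ne\b_k$ visible exactly where they are used. The paper's induction is shorter to write but buries the reason why the elimination closes up (and the final ``by symmetry'' step is stated without justification). Both proofs are valid; yours is arguably the more self-contained and more transparent one.
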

\begin{proof}  We prove the lemma by induction. For $m=1$ its
validity is obvious. Suppose that we have proved it for $m=N-1$
and let us prove it for $m=N$.

Multiplying the $k$-th equation in (\ref{system}) ($k=1,\dots,N$)
by $\b_k-\a_N$ and then subtracting the $N$-th equation from the
first $N-1$ equations we obtain
\begin{gather*}
\forall k=1,\dots, N-1:\ \suml_{j=1}^{N-1}{\a_j\hat\rho_j\over
\b_k-\a_j}=1
\end{gather*}
where the new variables $\hat\rho_j$, $j=1,\dots, N-1$ are
expressed in terms of $\rho_j$ by the formula
\begin{gather}\label{new}
\hat\rho_j:=\rho_j\ds{\a_N-\a_j\over \b_N-\a_j},\ j=1,\dots, N-1
\end{gather}
Thus\textcolor{black}{, the numbers $\hat\rho_j$} satisfy the system
(\ref{system}) with $m=N-1$. \textcolor{black}{Therefore,} by the
induction
\begin{gather}\label{syst_sol_ind}
\hat\rho_j={\b_j-\a_j\over\a_j}\prod\limits_{i=\overline{1,N-1}|i\not=
j}\ds\left({\b_i-\a_j\over  \a_i-\a_j}\right)
\end{gather}
It follows from (\ref{new}), (\ref{syst_sol_ind}) that $\rho_j$,
$j=1,\dots,N-1$, satisfy \textcolor{black}{formula} (\ref{prod}).
The validity of this formula for $\rho_N$ follows from the
symmetry of the system.

Lemma \ref{lm1} and Theorem \ref{th3} are proved. This completes
the proof of the main theorem.

\end{proof}\end{proof}

\begin{remark}\label{rem22}We noted above that the \textcolor{black}{metric} $g\e$ of the
manifold $M\e$ is continuous but piecewise-smooth (see
\textcolor{black}{formulae} (\ref{metrics})-(\ref{metrics+})).
However one can approximate $g\e$ by a smooth
\textcolor{black}{metric} $g^{\eps\rho}$ which differs from $g\e$
only in small $\rho$-neighbourhoods of $\partial B_{ij}\e$ and
moreover the corresponding Laplace-Beltrami operator has the same
spectral properties as $\eps\to 0$.

Namely, in a small neighbourhood $U_{ij}\e$ of $\partial B_{ij}\e$
we introduce the local coordinates $(x_1,\dots,x_n)$ by
\textcolor{black}{formulae} (\ref{coord}) and define $g^{\eps\rho}$
by the formula
$$g_{\a\b}^{\eps\rho}(x_1,\dots,x_n)=g_{+_{\a\b}}^{\eps}(x_1,\dots,x_n)\varphi(x_n/\rho)+
g_{-_{\a\b}}^{\eps}(x_1,\dots,x_n)(1-\varphi(x_n/\rho))$$ where
$\phi(r)$, $r\in\mathbb{R}$, is a smooth positive function equal
to $1$ as $r\geq 1$, equal to $0$ as $r\leq -1$ and positive as
$-1<r<1$, the coefficients $g_{\pm_{\a\b}}^{\eps}$ are defined by
(\ref{metrics+}). Outside $\cupl_{i,j}U_{ij}\e$ we set
$g^{\eps\rho}=g\e$.

It is easy to see that $ A^{\eps\rho}g^{\eps}\leq g^{\eps\rho}\leq
B^{\eps\rho}g^{\eps} $, where $A^{\eps\rho},B^{\eps\rho}$ are
positive constants depending on $\eps$ and $\rho$ in such a way
that for \textbf{fixed} $\eps$
\begin{gather}\label{AB-in1}
\liml_{\rho\to 0}A^{\eps\rho}=\liml_{\rho\to 0}B^{\eps\rho}=1
\end{gather}
Using the min-max principle one can obtain that
\begin{gather}\label{AB-in2}
\forall k\in\mathbb{N},\ \forall\theta \in\mathbb{T}^n:\quad
{(A^{\eps\rho})^{n/2}\over
(B^{\eps\rho})^{1+n/2}}\lambda_k^\theta(\M_i\e)\leq
\lambda_k^\theta(\M_i\e,g^{\eps\rho})\leq{(B^{\eps\rho})^{n/2}\over
(A^{\eps\rho})^{1+n/2}}\lambda_k^\theta(\M_i\e)
\end{gather}
Here $\lambda_k^\theta(\M_i\e,g^{\eps\rho})$ is the $k$-th
eigenvalue of the Laplace-Beltrami operator with $\theta$-periodic
boundary conditions on the manifold $\M_i\e$ equipped with the
\textcolor{black}{metric} $g^{\eps\rho}$. This inequality is proved
in \cite[Chapter A]{Anne} for manifolds without a boundary, for
our case the proof is \textcolor{black}{completely analogous}.

Let $\delta_1>0,\ L_1>0$. We have just proved (see Theorems
\ref{th2}, \ref{th3}) that there are such
$\eps=\eps(\delta_1,L_1)$ and such $d_j,b_j$ that the manifold
$M=M\e$ satisfies (\ref{spec1})-(\ref{spec2}) with
$\delta=\delta_1$, $L=L_1$.

So let us fix $\eps=\eps(L_1,\delta_1)$. Then it follows from
(\ref{AB-in1}), (\ref{AB-in2}) that
\begin{gather}\label{AB-in3}
\forall\theta\in\mathbb{T}^n,\ \forall k\in\mathbb{N}:\
\left|\lambda_k^\theta(\M_i\e)-
\lambda_k^\theta(\M_i\e,g^{\eps\rho})\right| \underset{\rho\to
0}\rightarrow 0
\end{gather}
uniformly in $(\theta,k)$ from $\mathbb{T}^n\times \mathbb{G}$,
where $\mathbb{G}$ is any compact subset of $\mathbb{N}$. Then
using (\ref{fund}), (\ref{AB-in3}) and taking into account Remark
\ref{rem_strong} we conclude: there is such
$\rho=\rho(\eps(\delta_1,L_1))$ that the manifold
$(M\e,g^{\eps\rho})$ satisfies (\ref{spec1})-(\ref{spec2}) with
$\delta=2\delta_1$, $L=L_1-\delta_1$.

Now, let $\delta>0$, $L>0$. Setting $\delta_1=\delta/2$,
$L_1=L+\delta/2$ we conclude that the manifold
$M=(M\e,g^{\eps\rho})$, where $\eps=\eps(\delta_1,L_1)$,
$\rho=\rho(\eps(\delta_1,L_1))$, satisfies
(\ref{spec1})-(\ref{spec2}).
\end{remark}

\section*{Acknowledgements} The author is grateful to Prof. E. Khruslov
for the helpful discussion. The work is supported by the
French-Ukrainian grant "PICS 2009-2011. Mathematical
Physics:Methods and Applications".


\begin{thebibliography}{99}

\bibitem{Anne} C. Ann\'{e}, Exemples de convergence de valeurs propres sur des
surfaces ayant une anse tr\'{e}s fine, Th\`{e}se, Universit\'{e}
S. et M. Grenoble, 1984.

\bibitem{Khrus1} L. Boutet de Monvel, E.Ya. Khruslov,
Averaging of the diffusion equation on Riemannian manifolds of
complex microstructure, Trans. Mosc. Mat. Soc. (1997) 137-161.

\bibitem{Khrus2}
L. Boutet de Monvel, I.D. Chueshov, E.Ya. Khruslov, Homogenization
of attractors for semilinear parabolic equations on manifolds with
complicated microstructure, Ann. Mat. Pura Appl. (4) 172 (1997)
297-322.

\bibitem{CDV1}
\textcolor{black} {Y. Colin de Verdi\`{e}re, Construction de
laplaciens dont une partie finie du spectre est donnee, Ann. Sci.
\'{E}c. Norm. Sup\'{e}r. (4) 20(4) (1987) 599-615.}


\bibitem{DalMaso}
G. Dal Maso, R. Gulliver, U. Mosco, Asymptotic spectrum of
manifolds of increasing topological type, Preprint S.I.S.S.A.
78/2001/M, Trieste, 2001.


\bibitem{DavHar} E. B. Davies, E. M. II Harrell, Conformally flat Riemannian metrics,
Schr\"{o}dinger operators, and semiclassical approximation, J.
Differ. Equ. 66(2) (1987), 165-188.


\bibitem{Exner} P. Exner, O. Post, Convergence of spectra of graph-like thin
manifolds, J. Geom. Phys. 54(1) (2005) 77-115.

\bibitem{Figotin1} A. Figotin, P. Kuchment, Band-gap structure of the spectrum
of periodic dielectric and acoustic media. I. Scalar model, SIAM
J. Appl. Math. 56(1) (1996) 68-88.

\bibitem{Figotin2} A. Figotin, P. Kuchment, Band-gap structure of the spectrum
of periodic dielectric and acoustic media. II. Two-dimensional
photonic crystals, {SIAM J. Appl. Math.} 56(6) (1996) 1561-1620.

\bibitem{Filonov}
N. Filonov, Gaps in the spectrum of the Maxwell operator with
periodic coefficients, Commun. Math. Phys. 240(1-2) (2003)
161-170.


\bibitem{Frielander} L. Friedlander, On the density of states of periodic media in
large coupling limit, Commun. Partial Differ. Equ. 27 (1\&2)
(2002) 355-380.

\bibitem{Green} E.L. Green, Spectral theory of Laplace-Beltrami operators with periodic metrics,
J. Differ. Equ. 133 (1997) 15-29.

\bibitem{HempelHerbst} R. Hempel, I. Herbst, Strong magnetic fields, Dirichlet boundaries,
and spectral gaps, Commun. Math. Phys. 169(2) (1995) 237-259.

\bibitem{Hempel} R. Hempel, K. Lienau, Spectral properties of periodic media
in the large coupling limit, Commun. Partial Differ. Eq. 25(2000)
1445-1470.

\bibitem{HempelPost} R. Hempel, O. Post, Spectral Gaps for Periodic Elliptic Operators
with High Contrast: an Overview, Progress in Analysis, Proceedings
of the 3rd International ISAAC Congress Berlin 2001, Vol. 1,
577-587, 2003.

\bibitem{IOS}
G.A. Iosif�yan, O.A. Olejnik, A.S. Shamaev, On the limiting
behaviour of the spectrum of a sequence of operators defined on
different Hilbert spaces, Russ. Math. Surv. 44(3) (1989) 195-196.


\bibitem{Khrab3}
A. Khrabustovskyi, Asymptotic behaviour of spectrum of
Laplace-Beltrami operator on Riemannian manifolds with complex
microstructure, Appl. Anal. 87(12) (2002) 1357-1372.

\bibitem{Khrab4}  A. Khrabustovskyi,
On the spectrum of Riemannian manifolds with attached thin
handles,  J. Math. Phys. Anal. Geom. 5(2) (2009) 145-169.

\bibitem{Khrab5} A. Khrabustovskyi, Homogenization of eigenvalue problem for
Laplace-Beltrami operator on Riemannian manifold with complicated
'bubble-like' microstructure, Math. Methods Appl. Sci. {32}(16)
(2009) 2123-2137.

\bibitem{Khrab6}
A. Khrabustovskyi, Homogenization of spectral problem on
Riemannian manifold consisting of two domains connected by many
tubes, submitted for publication; arXiv:1011.3931.

\bibitem{Lledo} F. Lledo, O. Post, Existence of spectral gaps, covering manifolds and
residually finite groups, Rev. Math. Phys. 20(2) (2008) 199-231.

\bibitem{March} V.A. Marchenko, E.Ya. Khruslov, Homogenization of
Partial Differential Equations, Progress in Mathematical Physics
46, Birkhauser, Boston, 2006.

\bibitem{Post_PHD} O. Post, Periodic manifolds, spectral gaps, and eigenvalues in gaps,
Ph.D. thesis, Technische Universit¨at Braunschweig, 2000
(http://www.mathematik.hu-berlin.de/\~{}post/papers/promo-e.pdf).

\bibitem{Post_JDE} O. Post, Periodic manifolds with spectral gaps,
J. Differ. Equ. 187(1) (2003) 23-45.

\bibitem{Rauch} J. Rauch, M. Taylor, Potential and scattering theory
on wildly perturbed domains, J. Funct. Anal. 18 (1975) 27-59.


\bibitem{Reed} M. Reed, B. Simon, Methods of Modern Mathematical Physics IV:
Analysis of Operators, Academic Press, 1978.

\bibitem{Tartar} L. Tartar, The general theory of homogenization. A
personalized introduction, Springer, Berlin, 2009.


\bibitem{Taylor} M. Taylor, Partial Differential Equations
I. Basic Course, Springer-Verlag, Heidelberg, 1996.

\bibitem{ZKO}
V.V. Zhikov, S.M. Kozlov, O.A. Oleinik, Homogenization of
Differential Operators and Integral Functionals, Springer, New
York, 1994.

\bibitem{Zhikov} V. Zhikov, On spectrum gaps of some divergent elliptic operators with
periodic coefficients, St. Petersb. Math. J., 16(5) (2005)
773-790.


\end{thebibliography}
\end{document}